\documentclass{article}
\usepackage[margin = 1 in]{geometry}
\usepackage{amsmath, amssymb, verbatim, xcolor, amsthm, bbm, mathtools, authblk}
\usepackage[colorlinks = true]{hyperref}
\numberwithin{equation}{section}

\newcommand{\A}{\mathcal{A}}
\newcommand{\B}{\mathfrak{B}}
\newcommand{\cC}{\mathcal{C}}
\newcommand{\D}{\mathcal{D}}
\newcommand{\E}{\mathbb{E}}
\newcommand{\cE}{\mathcal{E}}
\newcommand{\F}{\mathcal{F}}

\newcommand{\cL}{\mathcal{L}}
\renewcommand{\P}{\mathbb{P}}
\newcommand{\R}{\mathbb{R}}
\newcommand{\cR}{\mathcal{R}}
\renewcommand{\S}{\mathcal{S}}
\newcommand{\T}{\mathsf{T}}
\newcommand{\V}{\mathcal{V}}
\newcommand{\tr}{\mathrm{tr}}
\newcommand{\wt}{\widetilde}
\newcommand{\wh}{\widehat}
\newcommand{\wb}{\overline}
\newcommand{\spn}{\text{span}}
\newcommand{\1}{\mathbbm{1}}

\newtheorem{theorem}{Theorem}[section]
\newtheorem{lemma}[theorem]{Lemma}
\newtheorem{prop}[theorem]{Proposition}
\newtheorem{cor}[theorem]{Corollary}
\newtheorem{claim}[theorem]{Claim}
\newtheorem{assump}[theorem]{Assumption}
\theoremstyle{remark}
\newtheorem*{remark}{Remark}

\allowdisplaybreaks

\begin{document}
\title{Langevin Dynamics of Non-reciprocally Coupled Spherical Spin Glasses}
\author[1]{Antonio Auffinger}
\affil[1]{Northwestern University, Department of Mathematics, Evanston, IL, USA.

\texttt{auffing@math.northwestern.edu}}

\author[2]{Ratul Biswas}
\affil[2]{NSF-Simons National Institute for Theory and Mathematics in Biology, Chicago, IL, USA.

\texttt{ratul.biswas@northwestern.edu}}

\date{}
\maketitle
\abstract{We analyze the limiting dynamics of two copies of a disordered mean-field spherical $p$-spin system coupled non-reciprocally to one another and initialized independently of the disorder. We prove that, on every finite time interval, as the system size tends to infinity, the empirical correlation and integrated response functions converge almost surely and in $L^q$ for $q \geq 1$, uniformly in time, to the unique non-random solution of a system of coupled integro-differential equations. For even $p$-spin models, we establish stationarity and exponential decay of the correlation and response functions in the high-temperature regime. In the special case of the spherical Sherrington--Kirkpatrick model, we derive explicit representations of the limiting correlation and integrated response functions. These representations allow us to characterize the long-time dynamics throughout the full range of temperatures. In particular, we prove the occurrence of aging in the low-temperature regime and fluctuation-dissipation behavior in the high-temperature regime, and provide a separate characterization of the dynamics at the critical temperature.}
\section{Introduction}

Non-reciprocal interactions arise naturally in biological systems. For instance, in predator-prey systems, the interaction between species benefits one population and harms the other \cite{All}, and in neural networks the influence of one neuron on another need not be symmetric \cite{Par}. Such interactions also play a role in collective motion and synchronization, where they can generate sustained oscillations, traveling waves, and other time-dependent phases including chaos \cite{Fru, Som}.

A particular natural class of non-reciprocal systems consists of multiple interacting species, with symmetric interactions among members of each species and antisymmetric interactions between members of different species. This structure has been previously known in the context of excitatory-inhibitory neural networks, for example in \cite{Seu}, where such interactions were shown to give rise to dynamics ranging from stable fixed points to limit cycles. Models with such non-reciprocal interactions have recently been studied in the statistical physics literature as well, for example, \cite{Avn} where two non-reciprocally coupled Ising models were studied. In another direction, Lorenzana et al \cite{Lor} analyzed the Langevin dynamics of two identical copies of the spherical Sherrington-Kirkpatrick spin glass model with a non-reciprocal coupling. Their analysis showed evidence that depending on the network structure spin glass systems could retain glassy-dynamics phenomena such as aging, now accompanied by oscillations. Contrary to their findings, previous studies had predicted that any amount of non-reciprocity tends to destroy glassiness. In this paper, we rigorously analyze the dynamics introduced in \cite{Lor} and extend its analysis to the mixed $p$-spin model.

For $x = (x_1, \ldots, x_N)\in \R^N$, let $H_N$ denote the Hamiltonian of a spin glass with mixed $p$-spin interactions, i.e.,\begin{align*}
    H_N(x) = \sum_{p=1}^m H_{N,p}(x) = \sum_{p=1}^m\frac{a_p}{p!}\sum_{1 \leq i_1, \ldots, i_p \leq N}J_{i_1 \ldots i_p}x_{i_1}\cdots x_{i_p}, 
\end{align*}
where $m\geq 2$, $a_m \neq 0$, and the disorder parameters $J_{i_1\ldots i_p} = J_{\{i_1\ldots i_p\}}$ are independent (modulo the permutation of indices) centered normal random variables with variance $N^{-p+1} \prod_k l_k!$, with $(l_1, l_2, \ldots)$ denoting the multiplicities of each element in the multiset $\{i_1, \ldots, i_p\}$. 
Alternatively, if we set $\xi(r) = \sum_{p=1}^m a_p^2 r^p/p!$,  $H_N$ is the centered Gaussian process defined on $\mathbb R^N$ with covariance given by
\[
\mathbb E H_N (x) H_N (y) = N \xi \left(\frac{ x \cdot y }{N}\right),
\]
where $\cdot$ denotes the standard inner product in $\mathbb R^N$.

In this paper we study the Langevin dynamics of two identical copies of $H_N$ coupled with a non-reciprocal parameter. Precisely, let $f:\R \to \R_+$ be a differentiable function such that $f'$ is locally Lipschitz and there exist constants $\kappa <\infty$ and $\rho,\delta> 0$ such that 
\begin{align}\label{cond}
    \sup_{t\geq 0}\frac{|f'(t)|}{(1+t)^{\kappa}}< \infty, \qquad \text{and}\qquad \inf_{t\geq 0} \Big(f'(t) - \rho t^{m/2 + \delta - 1}\Big) > -\infty.
\end{align}

Let $x(t) = (x^1(t), x^2(t)) \in \R^{2N}$, where $x^a(t) = (x_1^a(t), \ldots, x^a_N(t))\in \R^N$ is the spin configuration of the species $a \in\{1,2\}$ at time $t$. At an inverse temperature $\beta >0$ and coupling strength $\alpha >0$, we consider the Langevin dynamics given by \begin{align}\label{lan}
    dx(t) = \Big[ G^\circ(x(t)) - f'\Big(\frac{\|x(t)\|^2}{2N}\Big)x(t)\Big] dt + dB(t),
\end{align}
where 
\[G^\circ(x(t))\coloneqq G(x(t) ) + \alpha (\Theta\otimes I_N) x(t),\] with $\otimes$ denoting the Kronecker product of matrices, $I_N$ the identity matrix of size $N$, \begin{align}
    G(x(t)):  = \begin{pmatrix}
        G^1(x(t))\\ G^2(x(t))
    \end{pmatrix}\coloneqq \beta \begin{pmatrix} \nabla H_N (x^1(t))\\ \nabla H_N (x^2(t))\end{pmatrix}, \qquad \Theta = \begin{pmatrix} 0 &1 \\-1 & 0\end{pmatrix},
    \label{nonrec}
    \end{align}
 and $B(t)$ a standard Brownian motion in $\R^{2N}$. The term containing $f$ in \eqref{lan} is a Lagrange multiplier that implements a `soft' spherical constraint on the spin configuration $x(t)$. We assume that the initial condition $x(0)$, drawn according to a distribution $\mu_{2N}$ on $\R^{2N}$, the disorder $J\coloneqq \{J_{i_1, \ldots, i_p}: 1\leq p\leq m, 1\leq i_1, \ldots, i_p\leq N\}$ in $H_N$ and the Brownian motion $B(t)$ are all independent of each other.

Note that when $\alpha=0$, the drift in \eqref{lan} is of gradient form, with the two copies sharing a common confining potential and following the same stochastic gradient descent dynamics. For $\alpha>0$, the additional forces $+\alpha x^2$ and $-\alpha x^1$ favor alignment of the first species with the second and anti-alignment of the second with the first, creating now an asymmetric imbalance.

Define the following quantities for $a, b\in\{1,2\}$: \begin{align*}
    &C^{ab}_N(s,t) = \frac{1}{N} x^a(s)\cdot x^b(t),&&\chi^{ab}_N(s,t) = \frac{1}{N} x^a(s)\cdot B^b(t)\\
    &A^{ab}_N(s,t)= \frac{1}{N}G^a(x(s))\cdot x^b(t) ,&&F^{ab}_N(s,t) = \frac{1}{N}G^a(x(s))\cdot B^b(t).
    \end{align*}
Denoting by $M^{ab}$ the $(a,b)$th entry of the matrix $M$, we can assemble the above quantities in four $2 \times 2 $ matrices $C_N(s,t)$, $\chi_N(s,t)$, $A_N(s,t)$ and $F_N(s,t)$, where 
    \begin{align*}
    C_N(s,t) & = 
        \begin{pmatrix}
            C_N^{11}(s,t) & C_N^{12}(s,t)\\ C_N^{21}(s,t) & C_N^{22}(s,t) 
        \end{pmatrix}
    \end{align*}
    and the others are defined similarly. Finally, we denote $K_N(t) =C_N(t,t)$. 

Let $T < \infty$ be fixed, and $\cC([0,T])$ denote the set of continuous real-valued functions on $[0,T]$, equipped with the supremum norm.  Let $\cE_{2N} = \R^{2N}\times \R^{d(N,m)}\times \cC([0,T])^{2N}$, where $d(N,m)$ is the dimension of the space of the disorder $J$, and $\R^{2N}$ and $\R^{d(N,m)}$ are equipped with the $L^2$-norm. We endow $\cE_{2N}$ with the product probability measure $\P = \mu_{2N}\otimes \gamma_{d(N,m)}\otimes \rho_{2N}$ where $\mu_{2N}$ is the distribution of $x(0)$, $\gamma_{d(N,m)}$ is the Gaussian law for the disorder and $\rho_{2N}$ is the distribution of the Brownian motion. For $(x(0), J, B) \in \cE_{2N}$, we write \begin{align*}
    \|(x(0), J, B)\|_{\cE_{2N}}^2 =  \sum_{p=1}^m N^{p-1}\sum_{1\leq i_1, \ldots, i_p \leq N} J_{i_1, \ldots, i_p}^2 + \sum_{a = 1, 2}\sum_{i=1}^N \Big((x_i^a(0))^2 +\sup_{0\leq t\leq T}(B_i^a(t))^2\Big).
\end{align*}

We make the following assumptions:

\begin{assump}\label{ass1}
    We assume that $\mu_{2N}$ is such that there exist positive constants $\kappa', \delta'> 0$ independent of $N$ such that if $V$ is a Lipschitz function on $\cE_{2N}$ with Lipschitz constant $L_V$, then for all $t > 0$ \begin{align}\label{hyp_conc}
     \P (|V - \E V| \geq t) \leq \frac{1}{\kappa'}e^{-\kappa'(t/L_V)^{\delta'}}.
 \end{align}
\end{assump}
    
\begin{assump}\label{ass2}
     We assume that there exists a $2\times 2$ positive semidefinite matrix $K(0)$ with finite entries such that \begin{align}\label{initial_cond}
     \lim_{N\to \infty}\|\E K_N(0) - K(0)\|= 0
 \end{align} where $\|\cdot\|$ denotes the induced Euclidean operator norm. We also assume that for each $t>0$, $\P(\|K_N(0) - K(0)\| > t)$ decays exponentially in $N$ and that \begin{align*}
     \lim_{t\to\infty}\limsup_{N\to \infty}\frac{1}{N}\log \P(\|K_N(0)\|\geq t) = -\infty.
 \end{align*}
\end{assump}

\begin{remark}
Exponential tightness at speed $N$ allows the initial-data cutoff to be chosen with any prescribed exponential decay rate. This stronger tail requirement is not imposed in \cite{Ben}, which assumes exponential concentration at each fixed deviation threshold and uniform moment bounds separately. Its role here is to justify localization estimates with arbitrarily large exponential rates. Together with Assumption \ref{ass1}, exponential tightness also implies $\sup_N\mathbb E\|K_N(0)\|^k<\infty$ for every fixed $k\geq 1$, so a separate uniform-moment assumption
is not required.
\end{remark}

We are now ready to present our first result, where we characterize the limiting empirical correlation function $C(s,t)$ and the integrated response function $\chi(s,t)$ for the Langevin dynamics on the soft spherical mixed $p$-spin model \eqref{lan} in terms of the unique solution to a system of non-linear integro-differential equations which are analogous to the Cugliandolo-Kurchan equations introduced in \cite{Cug} and rigorously proved the case $\alpha=0$ for the soft-spherical SK model in \cite{Ben2} and the mixed $p$-spin model in \cite{Ben}.
 
\begin{theorem}\label{thm1}
    Let $T<\infty$ be fixed and let $f$ satisfy \eqref{cond} and suppose that Assumptions \ref{ass1} and \ref{ass2} hold. Then there exist deterministic $2\times 2$ matrices $C(s,t)$ and $\chi(s,t)$ such that, as $N\to \infty$, \begin{align*}
        \|C_N(s,t) - C(s,t)\| \to 0 \qquad \text{and} \qquad \|\chi_N(s,t)-\chi(s,t)\|\to 0
    \end{align*} uniformly on $[0,T]^2$, almost surely and in $L^q$ for any $1\leq q< \infty$ with respect to $x(0),\ J$ and $B$. 
    
    Further, we have that $\chi(s,t) = \int_0^t R(s,u) du$, where $(C, R, K)$ is the unique solution to the following system of equations on $s>t$, \begin{align}
        \partial_sC(s,t) & = - f'\Big(\frac{\tr(K(s))}{2}\Big)C(s,t)  + \beta^2\int_0^t\xi'(C(s,u))R(t,u)^Tdu + \beta^2\int_0^s\Big(\xi''(C(s,u))\odot R(s,u)\Big)C(u,t)du \notag \\
    & \qquad + \alpha \Theta C(s,t),\label{l2.1}\\
        K'(s) & =  \alpha[\Theta K(s) -K(s) \Theta] - 2f'\Big(\frac{\tr(K(s))}{2}\Big)K(s) + \beta^2\int_0^s \Big[\xi'(C(s,u))R(s,u)^T + R(s,u)\xi'(C(u,s))\Big]du\notag \\
            & \qquad + I_2 + \beta^2\int_0^s\Big[\Big(\xi''(C(s,u))\odot R(s,u)\Big)C(u,s) + C(s,u)\Big(\xi''(C(u,s))\odot R(s,u)^T\Big)\Big]du,\label{l2.2}\\
       \partial_s R(s,t) & =  - f'\Big(\frac{\tr(K(s))}{2}\Big)R(s,t)+ \alpha \Theta R(s,t) +  \beta^2\int_t^s\Big(\xi''(C(s,u))\odot R(s,u)\Big)R(u,t) du,\label{l2.3}
       \end{align}
     subject to the conditions \begin{equation} \label{l2.4}
        \begin{split}
            R(s,t) & = 0, R(s,s) = I_2  \text { for all }t >s \geq 0, \\
            K(s) & = C(s,s)  \text { for all }s \geq 0,\\
            C(s,t) & = C(t,s)^T  \text { for all }s, t \geq 0.
        \end{split}
    \end{equation}
    where the matrix $K(0)$ is given in \eqref{initial_cond}.
\end{theorem}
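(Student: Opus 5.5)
We follow the strategy of Ben Arous, Dembo and Guionnet \cite{Ben}, adapted to the $2\times 2$ matrix-valued setting with the extra linear non-reciprocal drift $\alpha(\Theta\otimes I_N)x$. The proof has three parts: concentration of the empirical quantities, convergence of their expectations, and uniqueness of the limiting system.

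\textbf{Step 1: a priori localization.} From \eqref{cond}, the confining term $f'$ dominates the gradient $\nabla H_N$ for large $\|x\|$, since $\|\nabla H_N(x)\|\lesssim \|J\|(\|x\|/\sqrt N)^{m-1}\sqrt N$ and $f'$ grows at least like $t^{m/2+\delta-1}$. The antisymmetric term $\alpha\Theta$ does no work, because $x\cdot(\Theta\otimes I_N)x=0$, so it drops out of It\^o's formula for $\|x(t)\|^2$. Using It\^o's formula together with Assumption \ref{ass2} and standard Gaussian bounds on the operator norm of $J$, we obtain
\[
\lim_{M\to\infty}\limsup_N N^{-1}\log\P\Big(\sup_{t\le T}\|x(t)\|^2/N>M\Big)=-\infty,
\]
with rates as large as needed. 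On the event $\{\|J\|\le M,\ \sup_t\|x\|^2\le MN\}$, the map $(x(0),J,B)\mapsto x(\cdot)$ is Lipschitz in the $\cE_{2N}$ norm, with constant $e^{CT}$ independent of $N$ (Gronwall, with $f'$ locally Lipschitz). Consequently $C_N,\chi_N,A_N,F_N$ are Lipschitz with constant $O(N^{-1/2})$ after truncation. Assumption \ref{ass1} then gives concentration of the form $\P(|V-\E V|>\epsilon)\le \kappa'^{-1}e^{-\kappa'(\epsilon\sqrt N)^{\delta'}}$. Combining this with equicontinuity in $(s,t)$, which follows from the SDE, and Borel--Cantelli reduces both the almost sure and the $L^q$ convergence to convergence of the expectations.

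\textbf{Step 2: closed equations for expectations.} Write the integrated SDE and take inner products to get exact identities, for example
\[
C_N(s,t)=C_N(0,t)+\int_0^s\big[A_N(u,t)-f'(\tr K_N(u)/2)C_N(u,t)+\alpha\Theta C_N(u,t)\big]du+\chi^T\text{-type term}.
\]
There are analogous identities for $\chi_N$ and $K_N$ (It\^o). To handle $\E A_N$ and $\E F_N$, apply Gaussian integration by parts in $J$, as in \cite{Ben}. This expresses $\E[A_N^{ab}]$ in terms of $\beta^2\xi'(C^{ab}_N)$, $\xi''(C_N^{ab})$ and the Malliavin derivatives of $x^b(t)$ with respect to $J$. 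Using the covariance $\E \partial_iH(x^a)\partial_jH(x^b)=\xi''(x^a\cdot x^b/N)x^a_ix^b_j/N+\xi'(\cdot)\delta_{ij}$, whose $a\ne b$ entries give the matrix Hadamard products $\xi''(C)\odot R$, one shows that
\[
\E A_N(s,t)\approx \beta^2\int_0^t\xi'(C_N(s,u))R_N(t,u)^Tdu+\beta^2\int_0^s(\xi''(C_N(s,u))\odot R_N(s,u))C_N(u,t)du,
\]
up to $o(1)$ errors controlled by Step 1. Here $R_N$ is the response, given by the derivative of $x$ with respect to $B$, and its integral is identified with $\chi_N$ via the relation $\E[x\cdot B]$ = integrated response (Girsanov/integration by parts in $B$). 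The response equation \eqref{l2.3} comes from the linearized SDE, in which $\alpha\Theta$ appears as a linear term. Concentration then allows $\E\xi'(C_N)$ to be replaced by $\xi'(\E C_N)$, which closes the system with vanishing errors.

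\textbf{Step 3: limit and uniqueness.} By Arzel\`a--Ascoli, $(\E C_N,\E\chi_N)$ is precompact in $\cC([0,T]^2)$. Every limit point solves \eqref{l2.1}--\eqref{l2.4}, with $K(0)$ given by Assumption \ref{ass2}. The remaining task is to prove that the system has a unique solution, which forces convergence of the whole sequence. For this we use a Picard/Gronwall argument on the space of bounded $(C,R,K)$, mirroring \cite{Ben}. The a priori bounds on $K$ come from $\tr K$: since $\tr(\Theta K-K\Theta)=0$, the coercivity of $f'$ bounds $\tr K$ uniformly on $[0,T]$, and Cauchy--Schwarz bounds the off-diagonal entries. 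Lipschitz estimates of the right-hand sides in sup norm then give contraction on short intervals, which we iterate.

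The main obstacle is Step 2. The Gaussian integration-by-parts computation must be carried out for the coupled two-species system, where each copy sees the same disorder at different configurations, and the cross terms $a\ne b$ must be shown to produce exactly the Hadamard-product structure. This requires controlling the error terms (of Malliavin derivative type) uniformly in $N$ on the truncated events. The plan here is to follow \cite{Ben} term by term, using that the non-reciprocal part is deterministic and linear and so commutes with the integration by parts.
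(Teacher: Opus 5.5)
\textbf{The gap is Step~2.} This is the core of the theorem, and you leave it at ``one shows that''.

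Your closure runs through a finite-$N$ response $R_N^{ab}(t,u)=\frac1N\tr\big(\partial x^a(t)/\partial B^b(u)\big)$, the normalized trace of the Jacobian of the stochastic flow. Nothing in Step~1 controls it, since the truncation/Lipschitz/concentration argument covers only $C_N,\chi_N,A_N,F_N$. You need this control for two things. First, to replace $\E[\xi'(C_N(s,u))R_N(t,u)]$ by $\xi'(\E C_N(s,u))\,\E R_N(t,u)$. Second, to produce a limit $R$ with $\chi=\int_0^tR$ at all: Step~3 only gives precompactness of $(\E C_N,\E\chi_N)$, and $\E\chi_N(t,s)=\int_0^s\E R_N(t,u)\,du$ holds only in expectation. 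So you need uniform moment bounds, equicontinuity and concentration for the Jacobian flow, which is driven by the random Hessian $\beta\nabla^2H_N(x(u))$. These must hold globally, not only on the truncated event, because Gaussian integration by parts is a global identity. Note also that $f'$ is only locally Lipschitz, so even the linearized SDE needs care.

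Likewise, ``\eqref{l2.3} comes from the linearized SDE'' is not an argument. The linearized equation contains $\beta\nabla^2H_N(x(s))$ acting on the Jacobian. Extracting the memory term $\int_t^s(\xi''(C(s,u))\odot R(s,u))R(u,t)\,du$ requires a second integration by parts in which the $J$-derivative hits the Jacobian itself. The remainders, such as $N^{-2}\tr(\mathcal{R}(s,u)^T\mathcal{R}(u,t))$ and third-derivative terms, must then be shown to vanish uniformly. ``Following \cite{Ben} term by term'' cannot supply this, because \cite{Ben} uses neither integration by parts in $J$ nor Malliavin derivatives. (Minor: the correct covariance is $\E[\partial_iH(x^a)\partial_jH(x^b)]=\xi''(x^a\cdot x^b/N)\,x^b_ix^a_j/N+\xi'(x^a\cdot x^b/N)\delta_{ij}$; your indices are transposed.)

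\textbf{The missing idea is to close the equations without any finite-$N$ response.} The paper uses Girsanov: $\P^{2N}_{x(0),J}|_{\F_\tau}=\cL^N_\tau\,\P^{2N}_{x(0),0}|_{\F_\tau}$, where $\cL^N_\tau$ is the exponential of a linear-minus-quadratic functional of $G$. Hence, given the path up to time $\tau$, the field along the path is conditionally Gaussian with mean $V$ and covariance $\Gamma$ satisfying $(k_\tau\circ dW)=V$, $(k_\tau\circ dU)=V+k_\tau V$ and $\Gamma+k_\tau\Gamma=k$ (Lemma~\ref{l1}). Together with It\^o's lemma for the polynomial-kernel integrals, this gives the closed asymptotic equations \eqref{p6.5}--\eqref{p6.8}. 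These involve only $C_N,\chi_N,A_N,F_N$ and $\wh A_N$, to which Corollary~\ref{c1} applies directly.

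The response appears only in the limit. Lemma~\ref{lem} inverts the Volterra equation \eqref{p6.4} for $F$ by Picard iteration and substitutes into \eqref{p6.2} to show that $\chi(s,t)$ is $C^1$ in $t$ on $s\ge t$. It then sets $R=\partial_t\chi$ and identifies $A$ and $F(s,t)=\int_0^s(\xi''(C(s,u))\odot R(s,u))\chi(u,t)\,du$ via product-rule identities, which yields \eqref{l2.1}--\eqref{l2.3}.

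Your Steps~1 and~3 are sound. In particular, a direct Gr\"onwall or short-interval contraction on the triangle is a legitimate substitute for the paper's conjugation and binary-tree argument for uniqueness.
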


\begin{remark}
We use the convention that the functions $\xi'$ and $\xi''$ act entrywise on the matrices and that $\odot$ denotes the Hadamard product of matrices.
\end{remark}

We now turn to even mixtures, where $\xi(r) = \sum_{p=1}^n a_{2p}^2 r^{2p}/(2p)!$ for some integer $n \geq 1$ and $a_{2n} \neq 0$. Our next result shows that the solutions to the system \eqref{l2.1}--\eqref{l2.3} for the soft spherical case approach the solution to that on the hard sphere as we change the potential function driving the Langevin dynamics.

\begin{theorem}\label{thm2}
    Fix $T,\beta < \infty$ and $\xi$ be an even mixture. For $\gamma \geq 0$, let $f_\gamma(r) = \frac{\gamma}{2}(r-1)^2 + f(r)$, where $f$ is a polynomial satisfying \eqref{cond} and $f'(1) = 1/2$. If $(C_\gamma, R_\gamma, K_\gamma)$ is a solution to \eqref{l2.1}--\eqref{l2.3} satisfying \eqref{l2.4} and $K(0) = I_2$, then as $\gamma \to \infty$, $(C_\gamma, R_\gamma)\to (C, R)$ uniformly in $[0,T]^2$ and $K_\gamma \to I_2$ uniformly on $[0,T]$, where $(C,R)$ is such that for $s\geq t$, \begin{align}
        \partial_sC(s,t) & = (- \mu(s) + \alpha \Theta)C(s,t)  + \beta^2\int_0^t\xi'(C(s,u))R(t,u)^Tdu + \beta^2\int_0^s\Big(\xi''(C(s,u))\odot R(s,u)\Big)C(u,t)du \label{hard_C} \\
    \partial_s R(s,t) & =  (- \mu(s)+ \alpha \Theta )R(s,t) +  \beta^2\int_t^s\Big(\xi''(C(s,u))\odot R(s,u)\Big)R(u,t) du, \label{hard_R}
    \end{align}
    with \begin{align}\label{hard_mu}
        \mu(s) = \frac{1}{2} + \frac{\beta^2}{2}\int_0^s \tr\Big[\xi'(C(s,u))R(s,u)^T +\Big(\xi''(C(s,u))\odot R(s,u)\Big)C(u,s) \Big]du
    \end{align}
    and subject to the conditions \begin{equation} \label{l2.4'}
        \begin{split}
            R(s,t) & = 0  \text { for all }t >s \geq 0, \\
            C(s,s) & = R(s,s) = I_2  \text { for all }s \geq 0,\\
            C(s,t) & = C(t,s)^T  \text { for all }s, t \geq 0.
        \end{split}
    \end{equation}
\end{theorem}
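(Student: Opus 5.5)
Following the strategy of Ben Arous–Dembo–Guionnet for the case $\alpha=0$, we argue by compactness and uniqueness. The plan has four steps: prove uniform bounds in $\gamma$, extract convergent subsequences, show that the Lagrange multiplier converges to $\mu(s)$, and identify the limit through uniqueness of the hard-sphere system.

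\textbf{Step 1: a priori bounds uniform in $\gamma$.} Set $Z_\gamma(s)=\tr K_\gamma(s)/2$. Taking half the trace of \eqref{l2.2}, the commutator $\alpha[\Theta K-K\Theta]$ drops out, since its trace vanishes. This gives
\[
Z_\gamma'(s)=-2f_\gamma'(Z_\gamma(s))Z_\gamma(s)+1+\beta^2 D_\gamma(s),
\]
where
\[
D_\gamma(s)=\int_0^s\tr\big[\xi'(C_\gamma(s,u))R_\gamma(s,u)^T+(\xi''(C_\gamma(s,u))\odot R_\gamma(s,u))C_\gamma(u,s)\big]du .
\]
Here $f_\gamma'(r)=\gamma(r-1)+f'(r)$. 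We first bound $C_\gamma$ and $R_\gamma$ on $[0,T]$ independently of $\gamma$. Cauchy–Schwarz for the positive semidefinite kernel $C_\gamma$ gives $|C_\gamma^{ab}(s,t)|^2\le K_\gamma^{aa}(s)K_\gamma^{bb}(t)$. For $R_\gamma$, apply Gronwall to \eqref{l2.3}, using that $f_\gamma'(Z_\gamma)$ is bounded below: $f'$ is bounded below by \eqref{cond}, and $\gamma(Z_\gamma-1)$ is controlled on the set where $Z_\gamma\ge 1-\epsilon$. The goal is to show that $\sup_{s\le T}|Z_\gamma(s)-1|\le c/\sqrt{\gamma}$. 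Since $Z_\gamma(0)=1$ and the restoring term is $-2\gamma(Z_\gamma-1)Z_\gamma$, a bootstrap argument should work. Assume $Z_\gamma\in[1/2,2]$ on $[0,\tau]$. Then $D_\gamma$ is bounded by a constant independent of $\gamma$ on $[0,\tau]$, and the ODE comparison gives $|Z_\gamma-1|\le c/\gamma$ there. Continuity then lets us push $\tau$ up to $T$. Along the way we obtain uniform bounds on $\gamma(Z_\gamma-1)$, hence on $f_\gamma'(Z_\gamma)$, and hence on $R_\gamma$, $C_\gamma$ and their $s$-derivatives.

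\textbf{Step 2: compactness.} With these bounds, \eqref{l2.1} and \eqref{l2.3} give uniform Lipschitz bounds on $C_\gamma$ and $R_\gamma$ in $s$. The symmetry $C_\gamma(s,t)=C_\gamma(t,s)^T$ transfers this to $t$ for $C_\gamma$. For $R_\gamma$ in $t$ we use the equation satisfied by $\partial_t R$, or equivalently equicontinuity via the representation of $R$ through the Volterra resolvent. Arzelà–Ascoli then yields subsequential uniform limits $(C,R)$, and the bootstrap of Step 1 already gives $K_\gamma\to I_2$ in trace. The off-diagonal and anisotropic parts of $K_\gamma$ are the subtle point. The hard-sphere system requires $C(s,s)=I_2$, so we also need $K^{11}_\gamma-K^{22}_\gamma\to0$ and $K^{12}_\gamma\to0$. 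Only the trace is constrained by $\gamma$, so this would have to come from even symmetry of $\xi$ together with the structure of the equations: the two species play symmetric roles, and $\Theta$ rotates the pair. I expect this to be a genuine obstacle. The first approach is to write the ODE for the traceless part of $K_\gamma$ and show it is stable with zero initial data. If that fails, the intended reading of $K_\gamma\to I_2$ should be revisited.

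\textbf{Step 3: the multiplier.} Define $\mu_\gamma(s)=f_\gamma'(Z_\gamma(s))$. Integrating the $Z_\gamma$ equation against test functions, and using $Z_\gamma'\approx0$ in the distributional sense together with $Z_\gamma\to1$, gives $2\mu_\gamma(s)\to1+\beta^2D(s)$, where $D$ is $D_\gamma$ evaluated at the limit $(C,R)$. This is exactly $2\mu(s)$ from \eqref{hard_mu}, so the convergence $\mu_\gamma\to\mu$ holds weakly and, given the equicontinuity from Step 1, uniformly. Passing to the limit in the integrated forms of \eqref{l2.1} and \eqref{l2.3} then gives \eqref{hard_C} and \eqref{hard_R}, and the conditions \eqref{l2.4'} follow from \eqref{l2.4}.

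\textbf{Step 4: identification of the limit.} Finally, we show that \eqref{hard_C}–\eqref{hard_mu} has at most one bounded solution on $[0,T]$. A Gronwall argument on the difference of two solutions works here, since all nonlinearities are locally Lipschitz on bounded sets. Uniqueness means every subsequential limit is the same, so the whole family $(C_\gamma,R_\gamma)$ converges as $\gamma\to\infty$. The main obstacle is Step 1: the bootstrap bound on $\gamma(Z_\gamma-1)$, i.e. uniform control of the Lagrange multiplier, together with the traceless part of $K_\gamma$ discussed in Step 2.
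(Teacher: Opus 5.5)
Your overall structure (uniform bounds, Arzel\`a--Ascoli, identification of the multiplier, uniqueness) is the same as the paper's, but Step 1 as written is circular. Under the bootstrap hypothesis $Z_\gamma\in[1/2,2]$ you only know $\mu_\gamma=\gamma(Z_\gamma-1)+f'(Z_\gamma)\geq -\gamma/2-c$. So $R_\gamma=e^{-\int_t^s\mu_\gamma}e^{\alpha(s-t)\Theta}H_\gamma$, and with it $D_\gamma$, is bounded only by something of order $e^{\gamma T/2}$. Your claim that $D_\gamma$ is bounded independently of $\gamma$ presupposes the lower bound $\gamma(Z_\gamma-1)\geq -c$, which is what you are trying to prove. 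Tightening the hypothesis to $|Z_\gamma-1|\leq M/\gamma$ gives $\|R_\gamma\|\lesssim e^{(M+c)T}$ and a closing condition of the form $c'+\beta^2Be^{(M+c)T}<2M$. This has no solution for large $\beta$ or $T$. The pointwise positivity of $R$ that tames the memory term when $\alpha=0$ is destroyed by the rotation $e^{\alpha\Theta}$.

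The paper breaks the loop by returning to the particle system. By \eqref{l2.8}, $\beta^2D_\gamma(s)$ is the limit of $\frac1N\E[G(x_\gamma(s))\cdot x_\gamma(s)]$. Euler's identity $\nabla H_N(y)\cdot y=\sum_p 2pH_{N,2p}(y)$ bounds this by $\|J\|_\infty^N$ times powers of $S_{N,\gamma}$. In turn, $S_{N,\gamma}$ is bounded uniformly in $\gamma$ by the Lyapunov argument of Proposition~\ref{PropUniq}, because the $\gamma$-term only helps on $\{S_{N,\gamma}\geq 1\}$. This controls $q_\gamma$ and $r_\gamma$ with no information on $R_\gamma$. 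The rest then follows:
$q_\gamma\geq 1/2$ comes from the $q_\gamma$-equation;
$R_\gamma$ is bounded by integrating $\mu_\gamma=(1-q_\gamma'+2r_\gamma)/(2q_\gamma)$, where the $q_\gamma'$ term integrates to a bounded logarithm;
$|\mu_\gamma|$ is bounded via $\mu_\gamma'=(\gamma+f''(q_\gamma))(1+2r_\gamma-2q_\gamma\mu_\gamma)$.

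The second gap is the one you flagged, and it needs a specific structural idea rather than a stability estimate. Only the trace of $K_\gamma$ is penalized. The damping on the traceless part is $-2\mu_\gamma=O(1)$, not $O(\gamma)$. Its forcing by the memory terms vanishes only if you already know that $C_\gamma,R_\gamma$ have no component outside $\V=\spn\{I_2,\Theta\}$.

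That is exactly the paper's algebraic closure lemma. For even $\xi$, $\xi'$ is odd, so $\xi'(c_dI_2+c_a\Theta)=\xi'(c_d)I_2+\xi'(c_a)\Theta\in\V$. Also $\xi''$ is even, so $\xi''(C)\odot R=\xi''(c_d)r_dI_2+\xi''(c_a)r_a\Theta\in\V$ whenever $C,R\in\V$. $\V$ is a commutative algebra closed under transposition, so the commutator $\Theta K-K\Theta$ vanishes on it. Together with $K(0)=I_2$, this means Picard iteration stays in $\V$. Uniqueness (Proposition~\ref{prop_uniq}) then gives $C_\gamma,R_\gamma\in\V$. A symmetric element of $\V$ is a multiple of $I_2$, so $K_\gamma=q_\gamma I_2$ exactly for every $\gamma$, and trace control suffices. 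This is where the evenness of $\xi$ enters the theorem, and without it a single scalar multiplier could not enforce $C(s,s)=I_2$.

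Once both ingredients are in place, your Step 3 works with only weak-$*$ convergence of the bounded multipliers $\mu_\gamma$. The uniform convergence you assert there is not needed; the paper obtains it from $q_\gamma'(0)=0$, i.e.\ $f'(1)=1/2$.
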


In the next result, we describe the behavior of the limiting equations at large times. For even mixtures, we establish the existence of a stationary regime at sufficiently high temperature: for each fixed time difference, the correlation and response converge as the waiting time tends to infinity to limits depending only on that difference. These limits decay exponentially as the time difference increases and are uniquely characterized by a stationary equation.

\begin{theorem}\label{thm3}
    Let $(C,R)$ denote the unique solution to \eqref{hard_C}-\eqref{hard_R}, subject to the conditions \eqref{l2.4'}. There exists $\beta_* > 0$ depending on $\xi$ such that for all $\beta \leq \beta_*$ and all $\tau \geq 0$, the stationary limits \begin{align*}
    C_*(\tau) \coloneqq \lim_{t \to \infty} C(t+\tau, t) \qquad \text{and} \qquad R_*(\tau) \coloneqq \lim_{t\to \infty} R(t+\tau, t)
    \end{align*}
    exist. Further, $(C_*, R_*)$ is the unique solution to the system
    \begin{align}
        C'_*(t) & = (-\mu_* + \alpha\Theta)C_*(t) + \beta^2 \int_0^\infty \Big(\xi''(C_*(u))\odot R_*(u)\Big)C_*(t-u)du + \beta^2 \int_t^\infty \xi'(C_*(u))R_*(u-t)^T du, \label{C*}\\
        R_*'(t) & = (-\mu_* + \alpha\Theta)R_*(t) + \beta^2 \int_0^t \Big(\xi''(C_*(u))\odot R_*(u)\Big)R_*(t-u)du, \label{R*}
    \end{align}
    where \begin{align}\label{mu*}
        \mu_* = \frac{1}{2} + \frac{\beta^2}{2} \int_0^\infty \tr\Big[\xi'(C_*(u))R_*(u)^T + \Big(\xi''(C_*(u))\odot R_*(u)\Big)C_*(u)^T\Big]du,
    \end{align}
    subject to the conditions \begin{align}\label{l2.4''}
            C_*(0) = R_*(0) = I_2 \qquad \text{and} \qquad C_*(-t) = C_*(t)^T \text{ and } R_*(-t) = 0\text{ for all } t>0
    \end{align}
    in the class of functions satisfying \begin{align*}
        \|C_*(t)\| \leq c_0 e^{-\delta_*t/2} \text{ and } \|R_*(t)\| \leq c_0 e^{-\delta_*t} \text{ for all } t\geq 0
    \end{align*}
    for some $c_0>1$ and $\delta_*<1/4$.
\end{theorem}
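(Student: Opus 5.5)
The plan is to follow the strategy of Dembo--Guionnet--Mazza for the high-temperature stationarity of the Cugliandolo--Kurchan equations, adapted to the $2\times 2$ matrix setting with the rotation term $\alpha\Theta$. The key observation is that $\Theta$ is antisymmetric, so $e^{\alpha\Theta t}$ is a rotation with norm one. Hence the linear part $-\mu(s)+\alpha\Theta$ contracts exactly like $-\mu(s)$ does, and all norm estimates reduce to the scalar case with $\mu(s)\geq 1/2-O(\beta^2)$.

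\textbf{Step 1: a priori exponential bounds uniform in the waiting time.} Rewrite \eqref{hard_R} in Duhamel form,
\[
R(s,t)=e^{-\int_t^s\mu}e^{\alpha\Theta(s-t)}+\beta^2\int_t^s e^{-\int_v^s\mu}e^{\alpha\Theta(s-v)}\int_t^v\big(\xi''(C(v,u))\odot R(v,u)\big)R(u,t)\,du\,dv,
\]
and similarly for $C$.
\begin{itemize}
\item Since $C(s,s)=I_2$ and $C(s,t)=\frac1N x(s)\cdot x(t)$ in the limit, $C$ is a correlation of unit-norm configurations, so $|C^{ab}|\leq 1$. Thus $\xi',\xi''$ are evaluated on a bounded set.
\item A bootstrap (continuity) argument on the set of $s$ where $\|R(s,t)\|\leq c_0e^{-\delta_*(s-t)}$ and $\|C(s,t)\|\leq c_0e^{-\delta_*(s-t)/2}$ then closes for $\beta\leq\beta_*$. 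The reason is that the nonlinear terms contribute $O(\beta^2)$ times convolutions of exponentials, which are dominated by the leading decay rate $\approx 1/2>\delta_*$.
\item The same bootstrap shows $\mu(s)$ is bounded and $|\mu(s)-1/2|=O(\beta^2)$ uniformly in $s$.
\end{itemize}

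\textbf{Step 2: existence of the limits.} Fix $\tau$ and set $C_t(\tau)=C(t+\tau,t)$ and $R_t(\tau)=R(t+\tau,t)$. We show these form Cauchy families as $t\to\infty$ by estimating the differences $\Delta_{t,t'}$ in a weighted sup norm $\sup_\tau e^{\delta'\tau}\|\cdot\|$, with $\delta'<\delta_*$ slightly.
\begin{itemize}
\item Shift the equations by $t$. The integrals over $u\in[0,t]$ become integrals over a window $[-t,\tau]$ in the time-difference variable.
\item The parts of these integrals beyond the common window are bounded by the exponential decay from Step 1, giving tails of order $e^{-\delta_*\min(t,t')/2}$. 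This includes the contribution of the initial-time region, which the decay makes negligible.
\item The remaining parts are Lipschitz in $(C,R,\mu)$ with constant $O(\beta^2)$. So a Gronwall/contraction estimate gives $\|\Delta_{t,t'}\|\leq O(\beta^2)\|\Delta_{t,t'}\|+O(e^{-c\min(t,t')})$.
\item Convergence of $\mu(s)$ to some $\mu_*$ follows in the same way from \eqref{hard_mu}.
\end{itemize}

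\textbf{Step 3: identifying and characterizing the limit.}
\begin{itemize}
\item Passing to the limit in the shifted equations, using dominated convergence justified by the exponential bounds, yields \eqref{C*}--\eqref{mu*}.
\item The conditions \eqref{l2.4''} are inherited from \eqref{l2.4'}; in particular $C(s,t)=C(t,s)^T$ gives $C_*(-\tau)=C_*(\tau)^T$.
\item For uniqueness in the stated class, take two solutions. Writing the stationary system as a fixed-point map $\Phi$ on the weighted space, with $\mu_*$ determined by \eqref{mu*}, $\Phi$ is a contraction for $\beta$ small. This is because every nonlinear term carries a factor $\beta^2$ multiplied by an integral of the form $\int e^{-\delta_* u/2}du=O(1)$, and the resolvent $e^{(-\mu_*+\alpha\Theta)t}$ has norm $e^{-\mu_* t}$.
\end{itemize}

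\textbf{Main obstacle.} The hardest step is Step 2, together with the self-consistency of $\mu$. The relevant $\mu$ depends on the whole past through \eqref{hard_mu}. Moreover, the $C$ equation contains the term $\int_0^t\xi'(C(s,u))R(t,u)^T\,du$, whose decay is only governed by the slower rate $\delta_*/2$ of $C$. Closing the bootstrap therefore requires the constants to satisfy $c_0^2\beta^2\ll 1/2-\delta_*$, which is where the restriction $\delta_*<1/4$ enters. I would first prove the bounds of Step 1 for $\mu$ alone, via the identity $\mu=\frac12+\frac12\tr(\cdot)$ obtained from $\partial_s\tr C(s,t)|_{t=s}$, and only then run the Cauchy argument with $\mu$ treated as a known, convergent coefficient.
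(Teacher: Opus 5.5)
Your overall plan (a priori decay, existence of the limits, passage to the limit, contraction for the stationary system) is sound, and Step 3 matches the paper's use of $\Xi_*$. However, Step 1 does not close for $R$ as you describe it. In Duhamel form the nonlinear term is $\beta^2\int_t^s e^{-\int_v^s\mu}e^{\alpha\Theta(s-v)}\int_t^v(\xi''(C(v,u))\odot R(v,u))R(u,t)\,du\,dv$. Inserting the hypothesis $\|R\|\le c_0e^{-\delta_*(\cdot)}$ into both $R$ factors gives an inner integral $c_0^2(v-t)e^{-\delta_*(v-t)}$. The whole term is then of size $\beta^2c_0^2(s-t)e^{-\delta_*(s-t)}/(1/2-\delta_*)$, and no choice of $c_0,\beta$ absorbs the factor $s-t$; your condition $c_0^2\beta^2\ll 1/2-\delta_*$ does not help. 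Linearizing instead, i.e.\ treating $\xi''(C)\odot R$ as a known kernel of size $\beta^2c_0e^{-\delta_*(\cdot)}$, only certifies the rate $\delta_*-O(\beta^2)$, which is again worse than the hypothesis. Your heuristic that convolutions of exponentials are dominated holds only when the two rates differ. That is why the $C$ part of the bootstrap, with rates $\delta_*$ and $\delta_*/2$, is fine.

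The missing idea is to use the quadratic structure of the $R$ equation itself. After the conjugation \eqref{defn_H}, $H$ solves \eqref{eqn:int_H}. The binary-tree expansion of Lemma~\ref{lem:H}, which amounts to comparison with the semicircle moment generating function, gives $\|H(s,t)\|\le\eta e^{\eta'\beta(s-t)}$ using only $\sup\|\xi''(C)\|$. Hence $\|R(s,t)\|\le\eta e^{-(\inf\mu-\eta'\beta)(s-t)}$. The paper obtains $\inf\mu\ge 1/4$ independently of $R$ from the Hamiltonian estimate in Lemma~\ref{lem:gamma_bounds} (see Lemma~\ref{exp_decay_cr}). Inside a bootstrap you could instead read the lower bound on $\mu$ off the hypothesis on $(C,R)$. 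The trace identity alone, as in your last paragraph, does not bound $\mu$ without information on $R$.

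Step 2 also does not close in the norm you chose. The shifted equation for $C(t+\tau,t)$ involves $C(t+u,t+v)$, $R(t,t+v)$ and $\mu(t+u)$ for all $-t\le v\le u$. So the Lipschitz terms are differences $\Delta_{t+v,t'+v}(u-v)$ at other waiting times, which $\sup_\tau e^{\delta'\tau}\|\Delta_{t,t'}(\tau)\|$ does not control. Taking the sup over the whole shifted window does not help either. Near its initial edge ($t+v$ close to $0$) the two systems differ by $O(\beta^2)$ with no decay in $t$: already $\mu(0)=1/2$, while $\mu(t'-t)$ in general is not. So the source term is not $O(e^{-c\min(t,t')})$. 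A workable version is as follows:
\begin{itemize}
\item Let $\epsilon(T)$ be the weighted sup over all pairs $T\le t\le t'$, with weight $e^{\delta'\tau}$ and $\delta'<\delta_*/2$ for $C$, together with the corresponding $\mu$-differences.
\item Split every memory integral at waiting time $T/2$.
\item Prove $\epsilon(T)\le O(\beta^2)\,\epsilon(T/2)+O(e^{-cT})$; since $\epsilon$ is bounded by the a priori estimates, this yields $\limsup_{T\to\infty}\epsilon(T)=0$.
\end{itemize}
The paper sidesteps this two-time bookkeeping. It shows that $\Xi$ is a contraction on $\S(c_0,\delta_1)$ and maps the closed subclass $\cR(c_0,\delta_1)$ of pairs admitting stationary limits into itself; for a fixed input, the limits of one iterate follow from dominated convergence. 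The unique fixed point, which is $(C,R)$, therefore lies in $\cR(c_0,\delta_1)$. Once repaired, your route could give quantitative information on the approach to stationarity, while the paper's argument is purely qualitative but shorter.
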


We now comment on previous rigorous results related to aging. Ben Arous, Dembo and Guionnet \cite{Ben,Ben2} established convergence to the Cugliandolo--Kurchan equations and low-temperature aging for a single soft-spherical model, the case $\alpha = 0$ here. Aging was also investigated for several dynamics in the Random Energy Model \cite{BBG, BBC, Gay}. Building on the results of \cite{Ben}, Dembo, Guionnet and Mazza \cite{Dem} established the hard spherical limit and, at sufficiently high temperature, convergence to a time-translation-invariant solution satisfying the fluctuation--dissipation relation. More recently, Dembo and Subag \cite{Dem2, Dem3} derived limiting equations for disorder-dependent initial conditions, including Gibbs initialization in replica-symmetric and, under suitable hypotheses, one-step replica-symmetry-breaking regimes.

The remainder of the paper is organized as follows. In Section \ref{sec2}, we derive uniform moment bounds and establish concentration of the functions $C,\chi,A,F$. These results are then used in Section \ref{sec3} to derive the limiting dynamics of the soft spherical model. In Section \ref{sec4}, we establish differentiability and uniqueness of the limiting functions, thereby completing the proof of Theorem \ref{thm1}. In Section \ref{sec5}, we extend the soft spherical results to even mixtures and prove Theorem \ref{thm2}. Section \ref{sec6} is devoted to the proof of Theorem \ref{thm3}, concerning the high-temperature aging phenomenon. In Section \ref{sec7}, we specialize to the SK Hamiltonian and show that our results rigorously recover the transition at $\beta=1/2$ between an aging phase and a time-translation-invariant regime satisfying the fluctuation-dissipation relation after a phase correction, as previously identified in \cite{Lor}. We also characterize the behavior at the critical temperature, refining the results of \cite{Ben2} for the exact spherical model. Throughout Sections \ref{sec2}–\ref{sec5}, we set $\beta=1$ and absorb its dependence into the sequence $(a_p)_p$. The dependence on $\beta$ can be restored by multiplying each occurrence of $\xi'$ and $\xi''$ by $\beta^2$. 

\textit{Statement on AI Use:} Large Language Models were used to assist in developing and refining the proofs of the lemmas in the Appendix, deriving the asymptotic expressions for aging in Section \ref{sec7}, and improving the overall presentation of the manuscript.

\textit{Acknowledgements: } RB would like to thank Giulia Garcia Lorenzana for helpful discussions during the initial part of the project. This research was supported by the NSF-Simons National Institute for Theory and Mathematics in Biology (NITMB) Fellowship supported via grants from the NSF (DMS-2235451) and Simons Foundation (MPS-NITMB-00005320).

\section{Uniform moment bounds and concentration}\label{sec2}

\begin{prop}\label{PropUniq}
For any $N$ and almost any $x(0), J$ and $B$, there exists a unique strong solution to \eqref{lan}. For almost all $x(0)$ and $J$, this solution is unique in law, and the law of the solution is a probability measure on $\cC([0,T])^{2N}$ which we denote by $\P^{2N}_{x(0),J}$. Further, defining \begin{align*}
    \|J\|_\infty^N\coloneqq \max_{1\leq p\leq m} N^{p/2-1}\sup_{\|u^1\|, \ldots, \|u^p\|\leq 1} \Big|\sum_{1\leq i_1,\ldots, i_p \leq N} J_{i_1, \ldots, i_p} u^1_{i_1}\cdots u^p_{i_p}\Big|
\end{align*}
and $S_N(t) = \|x(t)\|^2/(2N)$, we have for $\delta$ as in \eqref{cond} a constant $\eta< \infty$ such that for all $N$, $u> 0$, $J$ and $x(0)$ that 
\begin{align}\label{p1.1}
    \P^{2N}_{x(0),J}\Big(\sup_{0\leq t\leq T}S_N(t) \geq S_N(0) + \eta T (1+\|J\|_\infty^N)^{m/(2\delta) + 1} + u\Big) \leq e^{-uN}
\end{align}
and for any $u > 0$, there exists some $\eta = \eta(u, T)<\infty$ such that
\begin{align}\label{p1.2}
    \P\Big(\sup_{0\leq t\leq T}S_N(t) \geq  \eta\Big) \leq e^{-u N}.
\end{align}
\end{prop}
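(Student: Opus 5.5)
Existence and uniqueness come from the standard theory of SDEs with locally Lipschitz coefficients plus a Lyapunov bound ruling out explosion. For fixed $N$, $J$, $x(0)$ the drift $b(x)=G^\circ(x)-f'(\|x\|^2/(2N))x$ is a polynomial in $x$ (plus the linear term $\alpha(\Theta\otimes I_N)x$) composed with the locally Lipschitz $f'$, so it is locally Lipschitz. Hence a unique strong solution exists up to the explosion time. Pathwise uniqueness together with Yamada--Watanabe then gives uniqueness in law, which defines $\P^{2N}_{x(0),J}$. Non-explosion will follow from the moment bound \eqref{p1.1} applied with stopping times $\tau_R=\inf\{t:S_N(t)\ge R\}$ and letting $R\to\infty$. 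So the real work is \eqref{p1.1}.

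Apply It\^o's formula to $S_N(t)=\|x(t)\|^2/(2N)$:
\[
dS_N=\frac1N\Big[x\cdot G(x)+\alpha x\cdot(\Theta\otimes I_N)x-2f'(S_N)NS_N+N\Big]dt+\frac1N x\cdot dB.
\]
The key observation is that $\Theta$ is antisymmetric, so $x\cdot(\Theta\otimes I_N)x=x^1\cdot x^2-x^2\cdot x^1=0$. The non-reciprocal term therefore drops out and we are essentially in the setting of \cite{Ben}. Next I bound $|x^a\cdot\nabla H_N(x^a)|$ using the definition of $\|J\|_\infty^N$. Since $x^a\cdot\nabla H_{N,p}(x^a)=pH_{N,p}(x^a)$, homogeneity gives $|x^a\cdot\nabla H_{N,p}(x^a)|\le C\,N\|J\|^N_\infty (\|x^a\|^2/N)^{p/2}$. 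Summing,
\[
\frac1N|x\cdot G(x)|\le C\|J\|^N_\infty(1+S_N)^{m/2}.
\]
By \eqref{cond}, $f'(S)\ge\rho S^{m/2+\delta-1}-c$, so $-2f'(S)S\le -2\rho S^{m/2+\delta}+2cS$. The drift is therefore at most
\[
\phi(S)=C(1+\|J\|^N_\infty)(1+S)^{m/2}+2cS+1-2\rho S^{m/2+\delta}.
\]
Maximizing over $S\ge0$, the positive part is dominated once $S^{\delta}\gtrsim(1+\|J\|_\infty^N)$, giving
\[
\sup_S\phi\le\eta(1+\|J\|^N_\infty)^{m/(2\delta)+1}.
\]
The exponent comes from plugging $S\sim(1+\|J\|)^{1/\delta}$ into $(1+\|J\|)S^{m/2}$.

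The martingale term $M_t=\frac1N\int_0^t x\cdot dB$ has quadratic variation $\frac1{N^2}\int\|x\|^2ds=\frac2N\int S_N\,ds$, which is not bounded a priori. This is the main obstacle. I would handle it with the exponential martingale $\exp(\lambda N M_t-\frac{\lambda^2N^2}{2}\langle M\rangle_t)$ at $\lambda=1$, and absorb $\frac{N}{2}\langle M\rangle_t\cdot\frac1N=\int S_N$ into the dissipative term. That is, consider $S_N(t)-\int_0^t(\phi(S_N)+S_N)ds-(M_t-\frac{N}{2}\langle M\rangle_t)$. Since $-2\rho S^{m/2+\delta}$ still dominates the extra linear $S$, the modified supremum is bounded by a (new) $\eta(1+\|J\|_\infty^N)^{m/(2\delta)+1}$. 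Then
\[
\sup_{t\le T}S_N(t)\le S_N(0)+\eta T(1+\|J\|^N_\infty)^{m/(2\delta)+1}+\sup_{t\le T}\Big(M_t-\tfrac N2\langle M\rangle_t\Big),
\]
and Doob's maximal inequality for the nonnegative supermartingale $\exp(N(M_t-\frac N2\langle M\rangle_t))$ yields $\P(\sup_t(M_t-\frac N2\langle M\rangle_t)\ge u)\le e^{-uN}$, which is \eqref{p1.1}. These steps are justified first up to $\tau_R$ and then passed to the limit.

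For \eqref{p1.2}, integrate \eqref{p1.1} over $x(0)$ and $J$:
\[
\P\Big(\sup_t S_N\ge\eta\Big)\le e^{-uN}+\P(S_N(0)\ge\eta/3)+\P\Big(\eta T(1+\|J\|^N_\infty)^{k}\ge\eta/3\Big).
\]
The second term is made smaller than $e^{-uN}$ for large $\eta$ by the exponential tightness in Assumption \ref{ass2}, since $S_N(0)=\tr K_N(0)/2$. The third uses the standard large-deviation bound for the injective norm of Gaussian tensors: $\P(\|J\|_\infty^N\ge A)\le e^{-cA^2N}$ for $A$ large, obtained by Gaussian concentration of a Lipschitz supremum plus an $\epsilon$-net bound on its mean. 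Hence all three terms are at most $e^{-uN}$ after choosing $\eta(u,T)$ large, up to an adjustment of constants.
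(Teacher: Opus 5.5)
Your proposal is correct and follows essentially the same route as the paper. The shared steps are: localization of the SDE; It\^o's formula for $S_N$, with the $\alpha$-term vanishing by antisymmetry of $\Theta$; a Young-type bound (the paper's Claim~\ref{claim_a1} with $\rho_0=2$) absorbing both the gradient term and the compensator $\int_0^t S_N$ of the exponential martingale $\exp\bigl(N(M_t-\int_0^t S_N)\bigr)$ into the superlinear dissipation; Doob's (Ville's) inequality; and a union bound using exponential tightness of $K_N(0)$ and the Gaussian tail of $\|J\|_\infty^N$ for \eqref{p1.2}. The only blemish is the slip $\frac N2\langle M\rangle_t\cdot\frac1N=\int S_N$, which should read $\frac N2\langle M\rangle_t=\int_0^t S_N(s)\,ds$; your subsequent display already uses the correct quantity.
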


\begin{proof}
    The argument is very similar to that of \cite[Proposition 2.1]{Ben}, however, we detail the proof here for completeness. For $r > 0$ fixed, let $\phi_r$ be a bounded, globally Lipschitz function on $\R^{2N}$ such that $\phi_r(x) = x$ whenever $\|x \| \leq \sqrt{2Nr}$. We define \begin{align*}
        b_r(x(t)) = G^\circ(\phi_r(x(t))) - f'\Big(r \wedge \frac{\|x(t)\|^2}{2N}\Big)x(t).
    \end{align*}
    Since $f'$ is locally Lipschitz and $\|J\|_\infty^N$ is almost surely finite, $b_r(x(t))$ is globally Lipschitz and hence the existence and uniqueness of a square-integrable strong solution $x_r(t)$ for \begin{align*}
        dx(t) = b_r(x(t))dt + dB(t)
    \end{align*}
    follows (see \cite[Chapter 5, Theorem 2.9]{Kar}). We define $x_r(t)$ on the same probability space, and consider the stopping times \begin{align*}
        \tau_r = \inf\{t\geq0: \|x_r(t)\| \geq \sqrt{2Nr}\}.
    \end{align*} Then, $\tau_r$ is non-decreasing, and $x_r(t)$ is a strong solution for \eqref{lan} in $[0, \tau_r]$. Thus if we can show that $\sum_{r=1}^\infty \P(\tau_r \leq T) <\infty$, by Borel-Cantelli Lemma it will follow that $x(t) = \lim_{r\to \infty}x_r(t)$ is a strong solution of \eqref{lan} in $[0,T]$.

    To that end, we fix $r$ and define $S_N^r(t) = \|x_r(t \wedge \tau_r)\|^2/{2N}$, and $Z(s) = N^{-1}\int_0^{s \wedge \tau_r} x_r(t \wedge \tau_r) \cdot dB(t)$. An application of It\^o's formula to $S_N^r(t)$ yields that \begin{align}\label{p1.3}
        S_N^r(s\wedge \tau_r) & \leq  S_N^r(0) + \sqrt{2}\|J\|_\infty^N\sum_{p=1}^m \frac{2^{p/2}|a_p|}{(p-1)!}\int_0^{s\wedge\tau_r}S_N^r(t)^{p/2}dt - 2\int_0^{s\wedge\tau_r} f'(S_N^r(t))S_N^r(t)dt + Z(s)+ s\wedge\tau_r.
    \end{align}

By \eqref{cond}, $t^{1-m/2}f'(t) \to \infty$ and so we have that there exists a constant $\eta(\|J\|_\infty^N)$ which is almost surely finite and does not depend on $r$ such that \begin{align*}
        S_N^r(s\wedge\tau_r) \leq S_N^r(0) +\eta (\|J\|_\infty^N)s + Z(s).
    \end{align*}
    The quadratic variation of $Z(s)$ is $(2/N)\int_0^{s\wedge\tau_r}S_N^r(t) dt \leq 2sr/N$, so applying Doob's inequality for the exponential martingale \begin{align*}
        \cL_\lambda(s) = e^{\lambda Z(s) - (\lambda^2/N)\int_0^{s\wedge \tau_r} S_N^r(t)dt}
    \end{align*}
    we get for any $u>0$, \begin{align}\label{p1.4}
        \P\Big(\sup_{s\leq T} \Big[Z(s) - \int_0^{s\wedge \tau_r} S_N^r(t)dt\Big] \geq u\Big) \leq \P\Big(\sup_{s\leq T} \cL_N(s) \geq e^{uN}\Big) \leq e^{-uN}.
    \end{align}
    Thus with probability $1-e^{-uN}$ we have for all $s\leq T$, \begin{align*}
        S_N^r(s\wedge \tau_r) \leq S_N^r(0) + \eta(\|J\|_\infty^N) T  + u + \int_0^{s\wedge \tau_r}S_N^r(t)dt
    \end{align*}
    and so by Gr\"onwall's inequality we have \begin{align*}
        \sup_{s\leq T} S_N^r(s\wedge \tau_r) \leq \big(S_N^r(0) + \eta(\|J\|_\infty^N) T + u\big)e^T
    \end{align*}
    so setting $r$ sufficiently large such that $u(r) = e^{-T}r/2 - \eta(\|J\|_\infty^N) T - S_N^r(0) > 0$ we get that $\sup_{s\leq T} S_N^r(s\wedge \tau_r) < r/2$ and hence $\P(\tau_r \leq T) \leq e^{-u(r)N}$, which implies $\sum_{r=1}^\infty \P(\tau_r \leq T) <\infty$.

    We now turn to the proof of \eqref{p1.1}. By Claim \ref{claim_a1}, assumption \eqref{cond} implies that for any $\rho_0 > 0$, there exists $\eta< \infty$ such that for all $u, t\geq 0$, \begin{align*}
        2tf'(t) - u\sum_{p=1}^m\frac{|a_p|(2t)^{p/2}}{(p-1)!} - 1 \geq \rho_0 t - \eta(1+u)^{m/(2\delta) + 1}. 
    \end{align*}
 Hence from \eqref{p1.3} we have that \begin{align*}
            S_N^r(s\wedge \tau_r) \leq S_N^r(0) -\int_0^{s\wedge \tau_r}\Big[\rho_0 S_N^r(t) - \eta(1+ \|J\|_\infty^N)^{m/(2\delta) + 1}\Big]dt + Z(s)
        \end{align*}
    From \eqref{p1.4} we get that with probability $\geq 1-e^{-uN}$, \begin{align*}
        \sup_{t \leq s\wedge \tau_r} Z(t) \leq \int_0^{s\wedge \tau_r} S_N^r(t)dt + u
    \end{align*}
    for all $s\leq T$. Setting $\rho_0 = 2$ we have \begin{align*}
        S_N^r(s\wedge \tau_r) & \leq S_N^r(0) + u - \int_0^{s\wedge \tau_r}S_N^r(t)dt + \eta(1+\|J\|_\infty^N)^{m/(2\delta) + 1}(s\wedge \tau_r)\\
        & \leq S_N(0) + u  + \eta  (1+\|J\|_\infty^N)^{m/(2\delta) + 1}(s\wedge \tau_r).
    \end{align*}
   Taking $r \to \infty$ and noting that $s\leq T$, we arrive at \eqref{p1.1}.

    Finally, fix $u>0$. Since $S_N(0)=\tr (K_N(0))/2\leq\|K_N(0)\|$, the exponential tightness in Assumption \ref{ass2} allows us to choose $\eta_1<\infty$ such that $\P(S_N(0)\geq \eta_1)\leq e^{-2uN}$ for every $N\geq1$. Similarly, \cite[(B.7)]{Ben} allows us to choose $\eta_2<\infty$ such that $\P(\|J\|_\infty^N\geq \eta_2)\leq e^{-2uN}$ for every $N\geq 1$. Averaging \eqref{p1.1} over the initial condition and disorder yields
    \begin{align*}
        \P\Big( \sup_{t\leq T}S_N(t)\geq S_N(0)+\eta_0T(1+\|J\|_\infty^N)^{m/(2\delta) + 1}+2u\Big)\leq e^{-2uN},
    \end{align*}
    and therefore, a union bound gives
    \begin{align*}
    \P\Big(\sup_{t\leq T}S_N(t)\geq\eta\Big) &\leq \P(S_N(0)\geq \eta_1) +\P(\|J\|_\infty^N\geq \eta_2)+e^{-2uN} \leq 3e^{-2uN}\leq e^{-uN},
    \end{align*}
    which proves \eqref{p1.2}.
    \end{proof}

\begin{remark}
The final Gr\"onwall step in \cite[Proposition 2.1]{Ben} overlooks the negative coefficient in \cite[(2.9)]{Ben}, so the conclusion of uniform-in-time pathwise bound there is not justified.
\end{remark}

\begin{prop}\label{p2}
    Let $U_N$ be any of the functions $A_N^{ab}$, $F_N^{ab}$, $C_N^{ab}$, and $\chi_N^{ab}$, where $a, b \in \{1,2\}$. Then, for any fixed $k,\ T <\infty$, the sequence $(\E [\sup_{s,t\leq T}|U_N(s,t)|^k])_N$ is uniformly bounded in $N$. Moreover, for any fixed $T$, the sequence of random functions $(U_N(s,t))_N$ is a.s. pre-compact in $C([0,T]^2)$ endowed with the uniform topology, and the sequence $(\E U_N(s,t))_N$ is also pre-compact in this topology.
\end{prop}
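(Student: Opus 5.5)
We follow the scheme of \cite[Lemma 2.2/Proposition 2.2]{Ben}, adapted to the $2N$-dimensional coupled system. There are three parts: uniform moment bounds, Hölder-type modulus of continuity estimates, and Arzelà--Ascoli.

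\emph{Moment bounds.} By Cauchy--Schwarz, each of the four quantities is bounded by products of simpler terms:
\begin{align*}
|C_N^{ab}(s,t)| &\le 2\sup_{u\le T}S_N(u), & |\chi_N^{ab}(s,t)| &\le \Big(2\sup_u S_N(u)\Big)^{1/2}\Big(N^{-1}\sup_u\|B(u)\|^2\Big)^{1/2},\\
|A_N^{ab}(s,t)| &\le N^{-1}\sup_u\|G(x(u))\|\,\|x(t)\|, & |F_N^{ab}(s,t)| &\le N^{-1}\sup_u\|G(x(u))\|\,\|B(t)\|.
\end{align*}
The definition of $\|J\|_\infty^N$ gives
\[
N^{-1/2}\|\nabla H_{N}(x^a)\|\le c\,\|J\|_\infty^N\sum_p |a_p|\,(\|x^a\|^2/N)^{(p-1)/2},
\]
so every term is bounded by a polynomial in $\sup_{u\le T}S_N(u)$, $\|J\|_\infty^N$ and $N^{-1}\sup_u\|B(u)\|^2$. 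The tails are then handled separately. Proposition \ref{PropUniq} \eqref{p1.2} gives $\P(\sup S_N\ge\eta(v))\le e^{-vN}$ for every $v$, with $\eta(v)$ growing at most polynomially in $v$ via \eqref{p1.1}. Combined with \cite[(B.7)]{Ben} for $\|J\|_\infty^N$ and standard Gaussian tails for $\sup_u\|B(u)\|^2/N$, this shows each of these variables has exponential tails uniformly in $N$. Hölder's inequality then bounds $\E\sup|U_N|^k$ uniformly in $N$. The cleanest route for $S_N$ is to integrate \eqref{p1.1} in $u$ conditionally on $(x(0),J)$, which gives $\E[\sup S_N^k\mid x(0),J]\le c_k(S_N(0)+(1+\|J\|^N_\infty)^{m/(2\delta)+1})^k$. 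We then use $\sup_N\E\|K_N(0)\|^k<\infty$ (from the remark after Assumption \ref{ass2}) and the moments of $\|J\|_\infty^N$.

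\emph{Equicontinuity.} For $C_N$, $\chi_N$, $A_N$ we write $x(t)-x(t')=\int_{t'}^{t}[G^\circ(x)-f'(S_N)x]\,du+B(t)-B(t')$, so that
\[
N^{-1/2}\|x(t)-x(t')\|\le |t-t'|\,\Psi_N+N^{-1/2}\|B(t)-B(t')\|,
\]
where $\Psi_N$ is a polynomial in $\sup S_N$, $\|J\|_\infty^N$, $\alpha$ (the term $\Theta\otimes I_N$ is harmless) and $\sup_{r\le \sup S_N}|f'(r)|$, which is polynomial by \eqref{cond}. For $G$ we need Lipschitz control. Because $\nabla H_N$ has Hessian norm bounded by $c\|J\|^N_\infty$ times a polynomial in $\|x\|^2/N$, we get $N^{-1/2}\|G(x(s))-G(x(s'))\|\le \Psi_N' N^{-1/2}\|x(s)-x(s')\|$. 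Finally, the Brownian modulus $\sup_{|t-t'|\le h}N^{-1/2}\|B(t)-B(t')\|$ is handled by Lévy's modulus, or by Gaussian concentration since it is a $1$-Lipschitz function of $B/\sqrt N$ in the sup norm: it is at most $c\sqrt{h\log(1/h)}$ with probability $1-e^{-cN}$. This yields a deterministic modulus $\omega(h)\to0$ and random constants $Z_N$, polynomial in the above quantities, such that
\[
|U_N(s,t)-U_N(s',t')|\le Z_N\,\omega(|s-s'|+|t-t'|).
\]

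\emph{Arzelà--Ascoli.} For the deterministic sequence $(\E U_N)_N$, the uniform moment bounds give $|\E U_N(s,t)-\E U_N(s',t')|\le \E[Z_N]\,\omega(\cdot)$ together with uniform boundedness, which proves precompactness. For the a.s.\ statement we need $\limsup_N Z_N<\infty$ and a limiting Brownian modulus almost surely. Both follow by Borel--Cantelli from the exponential (summable) tail bounds: the choice $\P(\sup S_N\ge \eta)\le e^{-uN}$ from \eqref{p1.2}, the bound on $\|J\|_\infty^N$, and the exponential Brownian bounds applied along a dyadic grid of $h$.

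The \textbf{main obstacle} is this almost-sure equicontinuity. We need the bounds to hold simultaneously for all $N$ large, with exceptional probabilities summable in $N$. Care is also needed for the Lipschitz estimate of $\nabla H_N$ on the random set $\{\|x\|^2\le 2N\eta\}$. This is where \eqref{p1.2}, with an arbitrarily large rate $u$, is essential.
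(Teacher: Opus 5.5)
Your proposal is correct and follows essentially the paper's own argument. You bound each $U_N$ by polynomials in $\|J\|_\infty^N$, $\sup_t S_N(t)$, $\sup_t\|B(t)\|^2/N$ and $\|G\|^2/N$ (via the $\|J\|_\infty^N$ gradient bound), and get uniform moments from \eqref{p1.1} together with $\sup_N\E\|K_N(0)\|^k<\infty$ and exponential tails from \eqref{p1.2}. You then control the modulus of continuity through the integrated Langevin equation, the Brownian modulus and the Lipschitz bound on $\nabla H_N$ (as in \cite[Lemma 2.2]{Ben}), and conclude by Borel--Cantelli and Arzel\`a--Ascoli.

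One remark is unsupported: that $\eta(v)$ in \eqref{p1.2} grows polynomially in $v$. Assumption \ref{ass2} gives no rate for the initial data, so this does not follow. It is harmless, because your conditional integration of \eqref{p1.1} already gives the moment bounds without it.
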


\begin{proof}
The proof follows a similar line of argument as that presented in \cite[Proposition 2.3]{Ben}, however we include some details here. Let $S_N(t) = \|x(t)\|^2/(2N)$ as above, and $P_N(t) = \|B(t)\|^2/(2N)$ and $Q_N(t) = \|G(x(t))\|^2/(2N)$. We denote $\|S_N\|_\infty = \sup_{0\leq t\leq T} S_N(t)$ and similarly for the functions $P_N$ and $Q_N$.

We claim that the following bound holds for each $k < \infty$: \begin{align}\label{p2.1}
    \sup_N \E [(\|J\|_\infty^N)^k] + \sup_N\E [\|S_N\|_\infty^k] + \sup_N\E [\|P_N\|_\infty^k] +\sup_N\E [\|Q_N\|_\infty^k] < \infty.
\end{align}
The uniform moment bounds for $\|J\|_\infty^N$ were provided in \cite[(2.12)]{Ben}. Together with the uniform boundedness of $( [\E\|K_N(0)\|^k])_N$, which implies the uniform boundedness of $(\E[ S_N(0)^k])_N$, and \eqref{p1.1} we have that for each $k \geq 1$, \begin{align}\label{p2.3}
    \sup_N \E \Big[\sup_{0\leq t\leq T}S_N(t)^k\Big] <\infty.
\end{align}
Next, a uniform moment bound for $\|P_N\|_\infty$ is readily obtained from the estimate \cite[(2.16)]{Ben}. Finally, using \cite[(2.11)]{Ben}, we obtain that for some $\eta>0$ independent of $N$, \begin{align}\label{p2.5}
    Q_N(t) & \leq  \eta(\|J\|_\infty^N)^2 \Big(1 + 2^{m-1} S_N(t)^{m-1}\Big),
\end{align}
from which the uniform moment bounds on $\|Q_N\|_\infty$ follow from \cite[(2.12)]{Ben} and \eqref{p2.3}.

From \eqref{p1.2}, \cite[(2.16), (B.7)]{Ben}, and \eqref{p2.5} we have that for each $r>0$, there is some $\eta(r)$ such that for all $N\geq 1$, \begin{align}\label{p2.6}
    \P( \|J\|_\infty^N + \|S_N\|_\infty + \|P_N\|_\infty + \|Q_N\|_\infty \geq \eta(r)) \leq e^{-rN}.
\end{align}
Note that 
\begin{align}\label{p2.61}
    |C_N^{ab}(s,t)|& \leq \frac{1}{2}\Big(C_N^{aa}(s,s) + C_N^{bb}(t,t)\Big) \leq S_N(s) + S_N(t)
    \end{align}
    and similarly, \begin{align*}
    |F_N^{ab}(s,t)| \leq Q_N(s) + P_N(t), \qquad |\chi_N^{ab}(s,t)|  \leq S_N(s) +  P_N(t), \qquad  |A_N^{ab}(s,t)| \leq Q_N(s) + S_N(t).
\end{align*}
Therefore from \eqref{p2.1} and \eqref{p2.6}, for $U_N \in \{ A_N^{ab},  F_N^{ab}, \chi_N^{ab}, C_N^{ab}: a, b = 1, 2\}$  the uniform in $N$ bounds for the moments of $\|U_N\|_\infty \coloneqq \sup_{s, t \in [0,T]} |U_N(s,t)|$ and the exponential in $N$ bounds on the tail probabilities of $\|U_N\|_\infty$ follow immediately.

We now establish equicontinuity of the sequence $(U_N)_N$ on $[0,T]^2$. Once this is established, from the uniform bounds just obtained and the Arzelà-Ascoli theorem, the a.s.\ pre-compactness of the sequence $(U_N)_N$ and the pre-compactness of $(\E U_N)_N$ shall follow.

Let $v,w \in \{x^a, B^a, G^a(x): a = 1, 2\}$. We first show that for every $\varepsilon > 0$ and $\eta > 0$ sufficiently small, there is some $L_{\varepsilon, \eta}$ such that $L_{\varepsilon, \eta}\to \infty$ as $\eta \to 0$ and for all $N$ \begin{align}\label{p2.7}
    \P\Big(\sup_{|t-t'| \leq \eta} \|v(t) - v(t')\|^2\geq N\varepsilon\Big) \leq e^{-L_{\varepsilon, \eta}N}, \qquad \sup_{|t-t'|\leq \eta}\E [\|v(t) - v(t')\|^2] \leq \frac{N}{L_{\varepsilon, \eta}}.
\end{align}
The proof of \eqref{p2.7} follows an argument similar to that in \cite[Proposition 2.3]{Ben}. Standard Brownian modulus estimates give the assertion for $v=B^a$. The integrated form of \eqref{lan}, the polynomial growth bound on $f'$ in \eqref{cond} and estimates \eqref{p2.1} and \eqref{p2.6} then allows us to deduce \eqref{p2.7} for $v= x^a$. Finally, using the polynomial-gradient estimate of \cite[Lemma 2.2]{Ben} we deduce \eqref{p2.7} for $v= G^a(x)$. Now, we express $U_N(s,t) = v(s)\cdot w(t)/N$ for some $v,w \in \{x^a, B^a, G^a(x): a = 1, 2\}$, and apply Cauchy-Schwarz to obtain \begin{align}\label{p2.75}
    |U_N(s,t) - U_N(s',t')| & \leq \frac{1}{N}\Big|[v(s) - v(s')]\cdot w(t)\Big| + \frac{1}{N}\Big|v(s')\cdot[w(t) - w(t')]\Big| \notag \\
    & \leq \Big\|\frac{v(s) - v(s')}{\sqrt{N}}\Big\|\Big\|\frac{w(t)}{\sqrt{N}}\Big\| +\Big\|\frac{w(t) - w(t')}{\sqrt{N}}\Big\|\Big\|\frac{v(s')}{\sqrt{N}}\Big\|.
\end{align}
By \eqref{p2.1}, \eqref{p2.6} and  the first estimate in \eqref{p2.7}, it follows from \eqref{p2.75} that the random moduli of continuity of $U_N$, \begin{align*}
    \sup_{|s-s'| + |t-t'| < \eta} |U_N(s,t) - U_N(s',t')|
\end{align*}
have exponentially summable tails. The Borel–Cantelli lemma yields almost-sure equicontinuity of $(U_N)_N$. The equicontinuity of $(\E U_N)_N$ follows from the second estimate in \eqref{p2.7} and \eqref{p2.75}.
\end{proof}

\begin{prop}\label{p3}
    Suppose that the concentration of measure hypothesis \eqref{hyp_conc} holds. Let $U_N$ be any of the functions $A_N^{ab}$, $F_N^{ab}$, $C_N^{ab}$, and $\chi_N^{ab}$, where $a, b \in \{1,2\}$. Then, for any $u > 0$, \begin{align*}
        \sum_N \P\Big(\sup_{s, t \leq T} |U_N(s,t) - \E U_N(s,t)| \geq u\Big) <\infty, 
    \end{align*} and as a consequence of the uniform moment bounds on $\|U_N\|_\infty$,
    \begin{align}\label{p3.1}
        \lim_{N\to \infty}\sup_{s, t\in [0,T]} \E[|U_N(s,t) - \E U_N(s,t)|^2] =0.
        \end{align} 
\end{prop}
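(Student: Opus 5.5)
The plan is to follow \cite[Proposition 2.4]{Ben}: localize to a set where the map from the data to $U_N$ is Lipschitz with an $N$-uniform constant, apply the concentration hypothesis \eqref{hyp_conc} to a Lipschitz extension, and then pass from fixed $(s,t)$ to the supremum using the equicontinuity in Proposition \ref{p2}.

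\textbf{Step 1: localization and Lipschitz bound.} Fix $r>0$ and let $\eta(r)$ be as in \eqref{p2.6}. Consider
\[
\Omega_N(r)=\{\|J\|_\infty^N+\|S_N\|_\infty+\|P_N\|_\infty+\|Q_N\|_\infty\le \eta(r)\},
\]
which has probability at least $1-e^{-rN}$. I would show that on a slightly enlarged set (defined through $\|x(0)\|$, $\|J\|_\infty^N$ and $\sup_t\|B(t)\|$) the map $(x(0),J,B)\mapsto x(\cdot)/\sqrt N$ is Lipschitz into $\cC([0,T])^{2N}$ with respect to $\|\cdot\|_{\cE_{2N}}/\sqrt N$, with constant $L(r)$ independent of $N$. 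To do this, take two data triples and subtract the integrated forms of \eqref{lan}. The drift difference is controlled by
\begin{itemize}
\item the local Lipschitz bound on $\nabla H_N$ in $x$ (from \cite[Lemma 2.2]{Ben}, with constant polynomial in $\|J\|_\infty^N$ and $S_N$);
\item the linearity of $\nabla H_N$ in $J$, which gives $\|\nabla H_{N,J}(x)-\nabla H_{N,J'}(x)\|\lesssim \|J-J'\|$ in the weighted norm times a polynomial in $S_N$;
\item the local Lipschitz property of $f'$ on bounded sets;
\item the term $\alpha(\Theta\otimes I_N)$, which is linear with norm $\alpha$.
\end{itemize}
Gr\"onwall's inequality then gives $\sup_t\|x(t)-\tilde x(t)\|\le L(r)\|(x(0),J,B)-(\tilde x(0),\tilde J,\tilde B)\|_{\cE_{2N}}$.

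Since each $U_N(s,t)=v(s)\cdot w(t)/N$ is bilinear with both factors of size $O(\sqrt N)$ on the good set, $U_N(s,t)$ is Lipschitz there with constant $L'(r)/\sqrt N$. The same holds for $G(x)$, again by \cite[Lemma 2.2]{Ben}.

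\textbf{Step 2: extension and concentration.} Extend $U_N(s,t)$ restricted to the good set to a globally Lipschitz $V$ with the same constant, via McShane extension truncated at level $\eta(r)$. Applying \eqref{hyp_conc} with $L_V=L'(r)/\sqrt N$ gives
\[
\P(|V-\E V|\ge u)\le \kappa'^{-1}e^{-\kappa'(u\sqrt N/L')^{\delta'}},
\]
which is summable in $N$. Next, $|\E V-\E U_N|\le \E[(|V|+|U_N|)\1_{\Omega_N^c}]\to 0$ by Cauchy--Schwarz, the moment bounds of Proposition \ref{p2}, and $\P(\Omega_N^c)\le e^{-rN}$. Also $U_N=V$ on the good set. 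So for fixed $(s,t)$, $\P(|U_N-\E U_N|\ge u)$ is summable in $N$.

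\textbf{Step 3: uniformity in $(s,t)$.} Take a finite grid of mesh $\epsilon$ in $[0,T]^2$ and take a union bound over its $O(\epsilon^{-2})$ points, a number independent of $N$. Between grid points, use \eqref{p2.7} and \eqref{p2.75}: the random modulus of continuity of $U_N$ exceeds $u/3$ with probability at most $e^{-L N}$, and the modulus of $\E U_N$ is small by the second estimate in \eqref{p2.7}. This yields $\sum_N\P(\sup_{s,t}|U_N-\E U_N|\ge u)<\infty$.

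Finally, \eqref{p3.1} follows: $\sup_{s,t}|U_N-\E U_N|\to0$ in probability, and the uniform moments of order $k>2$ from Proposition \ref{p2} give uniform integrability, hence convergence in $L^2$.

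The main obstacle is Step 1. Because the drift is only locally Lipschitz, the Lipschitz property holds only on a bounded set, and that set must be defined in terms of the data $(x(0),J,B)$ rather than the trajectory; otherwise the extension argument breaks. I would handle this as \cite{Ben} does: use \eqref{p1.1} to bound $\|S_N\|_\infty$ deterministically by data norms off an exponentially small event, and then check that the nonreciprocal term preserves the Gr\"onwall structure. It does, since it is linear and bounded.
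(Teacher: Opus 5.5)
Your proposal is correct and follows essentially the paper's route: localize to the set $\cL_{N,r}$ from \eqref{p2.6}, prove the Gr\"onwall Lipschitz bound \eqref{p3.3} for the strong solution there, transfer it to $U_N$ through the bilinear estimate \eqref{p3.4}, apply concentration (the paper cites \cite[Lemma 2.5]{Ben} for what your Step 2 does by hand), and pass to the supremum via a finite grid and the moduli of continuity from Proposition \ref{p2}. The obstacle you flag is not a real one: $S_N,P_N,Q_N$ are functions of the data $(x(0),J,B)$, so the paper's trajectory-defined good set is already a subset of $\cE_{2N}$ on which the Lipschitz extension applies directly (and \eqref{p1.1} is in any case a probabilistic rather than a deterministic bound).
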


\begin{proof}
    Similar to \cite[Proposition 2.4]{Ben}, the idea is to use the concentration of measure property \cite[Lemma 2.5]{Ben}, which holds under the assumption of hypothesis \eqref{hyp_conc}. To that extent, we let $S_N, P_N, Q_N$ be as above and for $r \geq 0$, define\begin{align*}
        \cL_{N,r} = \{(x(0), J, B) \in \cE_{2N}: \|J\|_\infty^N + \|S_N\|_\infty + \|P_N\|_\infty + \|Q_N\|_\infty \leq r\}.
    \end{align*}
    For the rest of this proof we shall fix $s, t\in [0,T]$ and use $\eta_i, i\geq 1$ to denote constants that depend only on $r$ and $T$. To lighten notation, we shall also write $U_N$ for $U_N(s,t)$.
    
    For $r \geq 0$ sufficiently large, on the event $\cL_{N,r}$, which by \eqref{p2.6}, occurs with probability at least $1 - e^{-\eta_1N}$, we have a pointwise bound for $|U_N|$. Proposition \ref{p2} guarantees the uniform in $N$ bounds for the second moment of $U_N$, so to use \cite[Lemma 2.5]{Ben}, it suffices to show that on $\cL_{N,r}$, $U_N$ is Lipschitz with constant $\eta_2/\sqrt{N}$. More precisely, if $x$ and $\wt x$ are two strong solutions of \eqref{lan} with initial conditions $(x(0), J, B)$ and $(\wt x(0), \wt J, \wt B)$ respectively we want to show that if $(x(0), J, B)$ and $(\wt x(0), \wt J, \wt B)$ are in $\cL_{N,r}$, then \begin{align}\label{p3.2}
        \|U_N - \wt U_N\|_\infty \leq \frac{\eta_2}{\sqrt{N}}\|(x(0), J, B) - (\wt x(0), \wt J, \wt B)\|_{\cE_{2N}}.
    \end{align}

    Assuming \eqref{p3.2}, the remainder of the proof follows as in \cite[Proposition 2.4]{Ben}. For each fixed $s,t\in[0,T]$, \cite[Lemma 2.5]{Ben} gives an exponentially summable bound on the tail probabilities of $|U_N(s,t)-\mathbb{E}U_N(s,t)|$ with constants uniform in $s$ and $t$. Taking a union bound over a  sufficiently fine finite grid of $[0,T]^2$, and utilizing the modulus of continuity bounds for $U_N$ and $\E U_N$ established in Proposition  \ref{p2} extends the summable bound on the tail probabilities of $|U_N(s,t)-\mathbb{E}U_N(s,t)|$ from the finite grid to the supremum over $[0,T]^2$. \eqref{p3.1} then follows from the Borel-Cantelli lemma and the uniform moment bounds of $\|U_N\|_\infty$ from Proposition \ref{p2}.

     We proceed to show \eqref{p3.2} now. To that end, we first establish a Lipschitz property for the strong solution when the initial condition is in $\cL_{N,r}$. More precisely, when $(x(0), J, B)$ and $(\wt x(0), \wt J, \wt B)$ are in $\cL_{N,r}$ and $x, \wt x$ are the strong solutions with corresponding initial conditions, we show that \begin{align}\label{p3.3}
         \sup_{0\leq t\leq T} \|x(t) - \wt x(t)\| \leq \eta_3 \|(x(0), J, B) - (\wt x(0), \wt J, \wt B)\|_{\cE_{2N}}.
     \end{align}
    To prove \eqref{p3.3}, we use \eqref{lan} to write \begin{align*}
    p_N(t)\coloneqq \frac{1}{N}\|x(t) - \wt x(t)\|^2& = \frac{1}{N}[x(t) -\wt x(t)]\cdot \Big[(x(0) - \wt x(0)) + (B(t) - \wt B(t)) +\int_0^t [G(x(u))-G(\wt x(u))]du\\
    & \qquad  + \int_0^t [G(\wt x(u)) - \wt G(\wt x(u))]du -\int _0^t\Big(f'\Big(\frac{\|x(u)\|^2}{2N}\Big) -f'\Big(\frac{\|\wt x(u)\|^2}{2N}\Big)\Big)x(u)du\\
    & \qquad - \int_0^tf'\Big(\frac{\|\wt x(u)\|^2}{2N}\Big)(x(u) - \wt x(u))du+\alpha (\Theta\otimes I_N) \int_0^t [x(u) - \wt x(u)]du\Big]
    \end{align*} 
    where $\wt G(\cdot)$ is defined similarly as $G(\cdot)$ but for the disorder $\wt J$. We now proceed as in \cite[Lemma 2.6]{Ben} to obtain that on $\cL_{N,r}$, for some constant $\eta_4 = \eta(r, T, \alpha)$ that is independent of $N$ and any $\lambda >0$,\begin{align*}
        p_N(t) \leq \eta_4 \Big(\lambda p_N(t) +  \frac{1}{\lambda N}\|(x(0), J, B) - (\wt x(0), \wt J, \wt B))\|_{\cE_{2N}}^2 +\frac{1}{\lambda}\int_0^t p_N(u)du\Big).
    \end{align*}
    \eqref{p3.3} now follows by choosing $\lambda$ small enough and applying Gr\"onwall's inequality. In order to show that $U_N$ is Lipschitz, we express $U_N(s,t) = v(s)\cdot w(t)/N$, where $v,w\in \{x^a, B^a, G^a(x): a = 1, 2\}$, and obtain that \begin{align}\label{p3.4}
    |U_N(s,t) - \wt U_N(s,t)|
    & \leq \Big\|\frac{v(s) - \wt v(s)}{\sqrt{N}}\Big\|\Big\|\frac{w(t)}{\sqrt{N}}\Big\| +\Big\|\frac{w(t) - \wt w(t)}{\sqrt{N}}\Big\|\Big\|\frac{\wt v(s)}{\sqrt{N}}\Big\|.
\end{align}
Thus, \eqref{p3.2} is established for $U_N \in \{C_N^{ab}, \chi_N^{ab}: a, b\in 1, 2\}$, by choosing $v= x^a$ and $w = x^b$ or $B^b$ in \eqref{p3.4} and using \eqref{p3.3} and the definition of $\cL_{N,r}$. To handle the case $U_N \in \{A_N^{ab}, F_N^{ab}: a, b\in 1, 2\}$ we use \cite[Lemma 2.2]{Ben}, \eqref{p3.3} and the fact that on $\cL_{N,r}$ one has $\|J\|_\infty^N + \|S_N\|_\infty \leq r$, to obtain the following estimates by Cauchy-Schwarz (see \cite[Lemma 2.7]{Ben} for details): \begin{align*}
    \|G(x(t)) -\wt G(\wt x(t))\| \leq \eta_5\|(x(0), J, B) - (\wt x(0), \wt J, \wt B)\|_{\cE_{2N}}, \qquad \|G(x(t))\|  \leq \eta_6\sqrt{N} \|J\|_\infty^N (1 + r^{m-1}) \leq \sqrt{N}\eta_7.
\end{align*} 
These estimates, when applied to \eqref{p3.4} with $v = G^a(x)$ and $w = x^b$ or $B^b$, yield \eqref{p3.2} for $U_N \in \{A_N^{ab}, F_N^{ab}: a, b\in 1, 2\}$. 
\end{proof}

For our purposes, the following corollary of Propositions \ref{p2} and \ref{p3} will be useful. 

\begin{cor} \label{c1}
    If $\Psi: \R^l \to \R$ is locally Lipschitz with $|\Psi(z)| \leq \eta(1+ \|z\|^k)$ for some $\eta, l, k < \infty$ and $X_N \in \R^l$ is a random vector such that for every $1\leq j \leq l$, the $j$th coordinate of $X_N$ is $U_N(s_j, t_j)$ for $U_N \in \{A_N^{ab}, F_N^{ab}, C_N^{ab}, \chi_N^{ab}:a, b \in \{1,2\}\}$ and some $s_j, t_j \in [0,T]$, then, \begin{align*}
        \lim_{N\to \infty} \sup_{s_j, t_j} |\E[\Psi(X_N)] - \Psi(\E[X_N])| = 0. 
    \end{align*}
\end{cor}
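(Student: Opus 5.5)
The idea is to combine the concentration of Proposition \ref{p3} with the uniform moment bounds of Proposition \ref{p2}. Since $\Psi$ is only locally Lipschitz with polynomial growth, we truncate. Write $Y_N = \E[X_N]\in\R^l$. By Proposition \ref{p2}, $\sup_N\sup_{s_j,t_j}\|Y_N\|\le \sup_N \E\big[\max_j \|U_N\|_\infty\big]\le M_0<\infty$. Fix $R> M_0+1$ and let $L_R$ be the Lipschitz constant of $\Psi$ on the ball $\{\|z\|\le R\}$. On the event $E_N=\{\|X_N-Y_N\|\le 1\}$ both $X_N$ and $Y_N$ lie in this ball. Hence
\begin{align*}
|\E\Psi(X_N)-\Psi(Y_N)| \le L_R\,\E\big[\|X_N-Y_N\|\1_{E_N}\big] + \E\big[(|\Psi(X_N)|+|\Psi(Y_N)|)\1_{E_N^c}\big].
\end{align*}

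The first term is bounded by $L_R\big(\sum_{j=1}^l \E|U_N(s_j,t_j)-\E U_N(s_j,t_j)|^2\big)^{1/2}$ via Cauchy--Schwarz. By \eqref{p3.1}, this tends to $0$ uniformly in the $s_j,t_j$, since each summand is bounded by $\sup_{s,t}\E|U_N-\E U_N|^2$ for the corresponding function. For the second term, Cauchy--Schwarz and the growth bound give
\[
\E\big[(|\Psi(X_N)|+|\Psi(Y_N)|)\1_{E_N^c}\big] \le \eta\,\big(\E[(2+\|X_N\|^k+M_0^k)^2]\big)^{1/2}\,\P(E_N^c)^{1/2}.
\]
The moment factor is bounded uniformly in $N$ and in the $s_j,t_j$, because $\|X_N\|\le \sqrt{l}\max_j\|U_N\|_\infty$ and Proposition \ref{p2} applies. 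For the probability, Chebyshev gives
\[
\P(E_N^c)\le \sum_j \P\big(|U_N(s_j,t_j)-\E U_N(s_j,t_j)|\ge 1/\sqrt{l}\big)\le l\sum_j \sup_{s,t}\E|U_N-\E U_N|^2\to 0,
\]
again uniformly, by \eqref{p3.1}.

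The only delicate point is keeping every bound uniform over the choice of points $s_j,t_j\in[0,T]$. This works because both \eqref{p3.1} and the moment bounds are stated with a supremum over $[0,T]^2$ (the latter as $\E\|U_N\|_\infty^k$). The radius $R$ also depends only on $M_0$, so $L_R$ does not vary with the points. Combining the two estimates, $\sup_{s_j,t_j}|\E\Psi(X_N)-\Psi(\E X_N)|\to 0$.
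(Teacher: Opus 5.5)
Your proof is correct and is essentially the argument the paper defers to via \cite[Corollary 2.8]{Ben}: you localize to a ball where $\Psi$ is Lipschitz, control the main term by the $L^2$ self-averaging \eqref{p3.1}, and control the remainder by the uniform moment bounds of Proposition \ref{p2}, with every estimate uniform in the points $(s_j,t_j)$. Your bound on $\|\E X_N\|$ should carry a factor $\sqrt{l}$, but this only changes the constant $M_0$ and is harmless.
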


Corollary \ref{c1} can be proved in the exact same way from Propositions \ref{p2} and \ref{p3} as \cite[Corollary 2.8]{Ben} was established from \cite[Propositions 2.3 and 2.4]{Ben}, so we skip the details.

\section{Limiting dynamics for the soft spherical model}\label{sec3}

Henceforth, unless otherwise noted, we will use $\E[M]$ to denote the expectation taken entrywise of the matrix $M$ over the randomness in $x(0), J, B$. 

The main objective of this section is to establish the following proposition:

\begin{prop}\label{p6}
    Fix $T < \infty$ and let $\psi(r) = \xi'(r) + r\xi''(r)$ act entrywise on matrices. For a $2\times 2$ matrix M, define \begin{align*}
        \D[M] = \xi'(M)\Theta - \xi''(M)\odot[M\Theta].
    \end{align*}
    Then any limit point of the sequence $((\E C_N, \E \chi_N, \E A_N, \E F_N))_N$ with respect to the uniform convergence on $[0,T]^2$ satisfies the following  equations \begin{align}
     C(s,t) & = C(s,0) + \chi(s,t) + \int_0^t \Big[A(u,s)^T -\alpha C(s,u)\Theta- f'\Big(\frac{\tr (K(u))}{2}\Big) C(s,u) \Big]du, \label{p6.1}\\
    \chi(s,t) & =  (s\wedge t)I_2 +\int_0^s \Big[F(u,t) +\alpha\Theta\chi(u,t)-f'\Big(\frac{\tr (K(u))}{2}\Big) \chi(u,t)\Big]du, \label{p6.2} \\
     A(s,t) & = \int_0^{s\vee t}\Big[f' \Big(\frac{\tr (K(u))}{2}\Big)\psi( C(s,u))-\xi''(C(s,u))\odot A(u,s)^T\Big] C(u,t)du - \alpha \int_0^{s \vee t} \D[C(s,u)]C(u,t) du \notag \\
    & \qquad + \xi'(C(s,s\vee t))C(s\vee t,t) - \xi'(C(s,0))C(0,t) - \int_0^{s\vee t} \xi'(C(s,u))A(u,t)du,\label{p6.3}\\
    F(s,t) & = \int_0^s \Big[f' \Big(\frac{\tr(K(u))}{2}\Big)\psi(C(s,u))-\xi''(C(s,u))\odot A(u,s)^T\Big]\chi(u,u\wedge t)du -\alpha \int_0^s \D[C(s,u)]\chi(u,t)du \notag \\
    & \qquad  + \xi'(K(s))\chi(s,s\wedge t) -\int_0^{t\wedge s} \xi'(C(s,u))du- \int_0^s\xi'(C(s,u))F(u,u\wedge t)du, \label{p6.4}
    \end{align}
    in $(\cC([0,T]^2)^{2\times 2})^4$ subject to the following conditions: \begin{equation} \label{p6.cond}
    \begin{split}
        C(s,t) & = C(t,s)^T \text{ for all } t,s \geq 0,\\
        K(s) & = C(s,s) \text{ for all } s \geq 0,\\
        \chi(s,0) & = 0,\ \chi(s,t) = \chi(s,s) \text{ for all } t\geq s\geq 0,\\
        F(s,0) & = 0,\ F(s,t) = F(s,s) \text{ for all } t\geq s\geq 0.
        \end{split}
    \end{equation}
\end{prop}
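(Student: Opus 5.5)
The plan follows the structure of \cite[Proposition 3.1 / Section 3]{Ben}: derive the first two identities directly from the SDE \eqref{lan}, derive the last two from Gaussian integration by parts in the disorder, and pass to the limit.

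\textbf{Identities \eqref{p6.1}--\eqref{p6.2}.} Take the $b$-th block of the integrated form of \eqref{lan} at time $t$,
\[
x^b(t)=x^b(0)+B^b(t)+\int_0^t\Big[G^b(x(u))-f'(S_N(u))x^b(u)+\alpha(\Theta\otimes I_N)x(u)\big|_b\Big]du,
\]
and dot it with $x^a(s)/N$. This gives
\[
C_N(s,t)=C_N(s,0)+\chi_N(s,t)+\int_0^t\big[A_N(u,s)^T-f'(\tr K_N(u)/2)C_N(s,u)\pm\alpha\,C_N(s,u)\Theta^{(T)}\big]du.
\]
The sign and transpose in the $\alpha$ term are fixed by $\Theta^T=-\Theta$; this is where the term $-\alpha C(s,u)\Theta$ comes from.

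For $\chi_N$, write $x^a(s)$ via the SDE and dot it with $B^b(t)$. The stochastic term contributes $N^{-1}B^a(s)\cdot B^b(t)$. Its mean is $(s\wedge t)\delta_{ab}$, and it concentrates by standard Gaussian estimates, giving the term $(s\wedge t)I_2$.

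Taking expectations, the nonlinearity $f'(\tr K_N/2)$ multiplied by $C_N$ or $\chi_N$ is handled by Corollary \ref{c1}, since $f'$ is locally Lipschitz with polynomial growth by \eqref{cond}. Along a subsequence, Proposition \ref{p2} and uniform convergence then yield \eqref{p6.1}--\eqref{p6.2}. The constraints \eqref{p6.cond} follow by construction: $\chi_N(s,0)=0$ and $F_N(s,0)=0$ since $B(0)=0$.

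\textbf{Identities \eqref{p6.3}--\eqref{p6.4}.} These are the heart of the argument. I would write
\[
A_N^{ab}(s,t)=\frac1N\sum_i \partial_i H_N(x^a(s))\,x^b_i(t)
\]
and compute its expectation by Gaussian integration by parts in $J$, exactly as in \cite[Section 3]{Ben}. Since $x$ is a functional of $J$, one needs the Malliavin-type derivative of $x(t)$ with respect to $J$. Differentiating \eqref{lan} shows that this derivative solves a linear SDE driven by the Jacobian of the drift:
\[
\nabla^2H_N,\qquad -f'\,I,\qquad -f''(S_N)\,x\,x^T/N,\qquad \alpha\,\Theta\otimes I_N.
\]
Integration by parts produces three kinds of terms. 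The covariance $\E[\partial_iH_N(x)\partial_jH_N(y)]$ gives the terms in $\xi'(C)$ and $\xi''(C)$. The response terms are then re-expressed, by integrating the linearized dynamics and using Itô's formula against $x^b$ and $B^b$, in terms of $A,C,\chi,F$. Following \cite{Ben}, one instead uses the trick of applying Itô to $\xi'(C_N(s,u))$ and to $H$-derivatives along the path, which closes the system without explicit response operators.

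The new ingredient is the antisymmetric drift. When the time derivative of $\partial H_N(x^a(u))$ along the path hits the $\alpha\Theta x$ term, it generates $\alpha[\xi'(C)\Theta-\xi''(C)\odot(C\Theta)]=\alpha\D[C]$. Here the first piece comes from differentiating the overlap argument in the second slot, and the second from the Hessian contracted with $\Theta x$. The term $\psi(C)f'$ arises in the same way from the confining drift, via Euler's identity for homogeneous terms, as in \cite{Ben}. Concretely, I would apply Itô's formula to
\[
u\mapsto \frac1N\,\E\big[\nabla H_N(x^a(s))\cdot\ldots\big]
\]
in the form used in \cite[Lemma 3.2]{Ben}, now with two species. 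The cross-species covariance is $\E\,\partial_iH(x^1)\partial_jH(x^2)$, built from $\xi'(C^{12})$ and $\xi''(C^{12})$, which gives the entrywise and Hadamard matrix structure.

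Every resulting expression is a polynomial or locally Lipschitz function of finitely many $U_N(s_j,t_j)$ integrated in time. Corollary \ref{c1} replaces expectations of products by products of expectations up to $o(1)$ uniformly on $[0,T]^2$, and then passing to a limit point gives \eqref{p6.3}--\eqref{p6.4}.

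\textbf{Main obstacle.} I expect the hardest step to be the bookkeeping in the integration-by-parts computation for $A$ and $F$ with the non-gradient $\alpha\Theta$ drift. Specifically, one must track transposes and Hadamard products correctly across the two species and verify that the $\alpha$ contributions collapse exactly into $\D[C(s,u)]$ multiplied by $C(u,t)$ or $\chi(u,t)$. The first thing I would try is to carry out the scalar computation with a generic linear drift $Lx$ and then specialize to $L=\alpha\Theta\otimes I_N$.
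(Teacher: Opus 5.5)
Your derivation of \eqref{p6.1}--\eqref{p6.2} is fine and matches the paper. Dotting the integrated equation for $x^b(t)$ with $x^a(s)$ is even slightly more direct than the paper's transpose-and-swap. Two small points need fixing. First, you must use $\E\chi_N(0,t)=0$, which holds because $x(0)$ is independent of $B$. Second, the conditions $\chi(s,t)=\chi(s,s)$ and $F(s,t)=F(s,s)$ for $t\ge s$ do not hold ``by construction''. They hold only for the expectations, because $B(t)-B(s)$ is independent of $\mathcal{G}_s$.

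\textbf{The gap is \eqref{p6.3}--\eqref{p6.4}.} This is the real content of the proposition, and your proposal describes it rather than derives it. The obstruction is that the path $x(\cdot)$ is itself a functional of $J$. Your text wavers between two mechanisms for handling this. One is Gaussian integration by parts in $J$, with the propagator $P(s,u)$ of the linearized dynamics. The other is an unspecified It\^o formula for ``$\frac1N\E[\nabla H_N(x^a(s))\cdot\ldots]$'' that supposedly closes ``without response operators''.

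If the first route is carried out, it yields identities of the form \eqref{l2.8}. These are expressed through $R_N(s,u)=\frac1N\tr P^{ab}(s,u)$, which is not among $A_N,F_N,C_N,\chi_N$. So your claim that every term is a locally Lipschitz function of the $U_N$'s is false, and Corollary \ref{c1} does not apply. To make this route work you would need:
\begin{itemize}
\item new moment and concentration bounds for the propagator;
\item a Novikov/Malliavin identity $\E\chi_N(s,t)=\int_0^t\E R_N(s,u)\,du$;
\item a second integration by parts to obtain an equation for $F$;
\item a conversion back to the stated form, whose right-hand sides contain $A$ and $F$ themselves, the boundary terms $\xi'(C(s,s\vee t))C(s\vee t,t)-\xi'(C(s,0))C(0,t)$, and $-\int_0^{t\wedge s}\xi'(C(s,u))du$.
\end{itemize}
None of this is addressed. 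The It\^o step you allude to does occur in the paper, but only to turn the stochastic integrals $\int\xi'(C_N(s,u))\,d_uC_N(u,t)$ into boundary terms. It does not deal with the $J$-dependence of the path.

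\textbf{The missing idea is the Girsanov/Gaussian-conditioning step of Lemma \ref{l1}.} Under the $J=0$ dynamics the path is independent of $J$, and $U(t)=\int_0^tG(x(u))du+B(t)$ is a Brownian motion. The density $\cL_\tau^N$ is a Gaussian tilt. Hence, conditionally on $\F_\tau$, the process $G(x(\cdot))$ is Gaussian with the explicit kernel $k$ of \eqref{eqn:k}. Its conditional mean satisfies $k_\tau\circ dW=V$ and $k_\tau\circ dU=V+k_\tau V$, and its conditional covariance satisfies $\Gamma+k\Gamma=k$.

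Pair $V+k_\tau V=k_\tau\circ dU$ with $x^b(t)/N$, using $dU=dx+f'x\,du-\alpha(\Theta\otimes I_N)x\,du$. Every term of \eqref{p6.3} then appears:
\begin{itemize}
\item $k_\tau V$ gives $(\xi''(C)\odot\wh A^T)C+\xi'(C)\wh A$.
\item The $f'x$ part gives $f'\psi(C)C$. This comes from the structure of $k$, not from Euler's identity.
\item The $\Theta x$ part gives $-\alpha\D[C]C$. The $\xi'\otimes I_N$ part of $k$ yields $\xi'(C)\Theta$, and the rank-one $\xi''$ part yields $-\xi''(C)\odot(C\Theta)$ via $\Theta^T=-\Theta$.
\item The $dx$ integrals become the boundary terms by \cite[Lemma A.1]{Ben}.
\end{itemize}
For \eqref{p6.4} you pair $V=k_s\circ dW$ with $B^b(t)$. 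You must also handle $\Gamma$ and show by concentration that the error $\E E_N^{ab}$ vanishes. This is the source of $-\int_0^{t\wedge s}\xi'(C(s,u))du$. Only then are all terms locally Lipschitz functions of $A_N,\wh A_N,F_N,C_N,\chi_N$, so that Corollary \ref{c1} closes the argument.
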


\begin{proof}

We use the following notation throughout this section. For two sequences of $2\times 2$ non-random matrices $(M_N)_N, (\wt M_N)_N$, we use $M_N\simeq \wt M_N$ to denote that $\|M_N(s,t) - \wt M_N(s,t)\| \to 0$ as $N\to \infty$ uniformly over $s, t \in [0,T]^2$. 

We claim that that the following hold:
\begin{align}
     \E C_N(s,t) & \simeq \E C_N(s,0) + \E \chi_N(s,t) + \int_0^t \Big[\E A_N(u,s)^T -\alpha \E C_N(s,u)\Theta - f'\Big(\frac{\tr( \E K_N(u))}{2}\Big) \E C_N(s,u)\Big]du, \label{p6.5}\\
    \E \chi_N(s,t) & \simeq  (s\wedge t)I_2 + \int_0^s \Big[\E F_N(u,t) +\alpha\Theta\E \chi_N(u,t) -f'\Big(\frac{\tr(\E K_N(u))}{2}\Big) \E \chi_N(u,t)\Big]du, \label{p6.6} \\
    \E A_N(s,t) & \simeq \xi'(\E C_N(s,s\vee t))\E C_N(s\vee t, t) - \xi'(\E C_N(s,0))\E C_N(0,t) - \int_0^{s\vee t} \xi'(\E C_N(s,u))\E A_N(u,t) du \notag \\
    & \qquad + \int_0^{s\vee t}\Big[f' \Big(\frac{\tr (\E K_N(u))}{2}\Big)\psi(\E C_N(s,u))-\xi''(\E C_N(s,u))\odot \E A_N(u,s)^T\Big]\E C_N(u,t)du \notag \\
    & \qquad - \alpha \int_0^{s\vee t} \D[\E C_N(s,u)] \E C_N(u,t)du, \label{p6.7}\\
   \E F_N(s,t) & \simeq \xi' (\E K_N(s))\E \chi_N(s,s\wedge t) -\int_0^{t\wedge s} \xi' (\E C_N(s,u))du- \int_0^s\xi'(\E C_N(s,u))\E F_N(u,u\wedge t)du \notag\\
    & \qquad  + \int_0^s \Big[f' \Big(\frac{\tr(\E K_N(u))}{2}\Big)\psi(\E C_N(s,u))-\xi''(\E C_N(s,u))\odot \E A_N(u,s)^T\Big]\E\chi_N(u,u\wedge t)du \notag \\
    & \qquad -\alpha \int_0^s \D[\E C_N(s,u)] \E \chi_N(u,t)du.  \label{p6.8}
    \end{align}

Assuming the claim, we establish \eqref{p6.1}-\eqref{p6.4} as follows. Since $B(0) = 0$, we have that $\E F_N(s,0) = 0$ for all $N \geq 1$ and $s \geq 0$. Now, letting $\mathcal{G}_s$ be the $\sigma$-algebra generated by $J, x(0)$ and $B(u), u\leq s$, we have for $a, b\in \{1,2\}$ and $t \geq s$, \begin{align*}
    \E F_N^{ab}(s,t) = \E F_N^{ab}(s,s) + \frac{1}{N}\E[G^a(x(s))\cdot \E [B^b(t) - B^b(s)|\mathcal{G}_s]] = \E F_N^{ab}(s,s),
\end{align*}
where the last step utilizes the independence of $B(t) - B(s)$ and $\mathcal{G}_s$. Hence $\E F_N(s,t) = \E F_N(s,s)$ for all $t \geq s$. A similar argument yields that $\E \chi_N(s,0) = 0$ and $\E \chi_N(s,t) = \E \chi_N(s,s)$ for all $N \geq 1$ and $t \geq s \geq 0$. By definition, it follows that $\E C_N(s,t) = \E C_N(t, s)^T$ for all $s,t$.

Let $\Phi(C, \chi, A, F): (\cC([0,T]^2)^{2\times 2})^4\to (\cC([0,T]^2)^{2\times 2})^4$ be the function that maps $(C,\chi,A, F)$ to the difference between the two sides of \eqref{p6.1}-\eqref{p6.4}. Since the boundary and symmetry conditions \eqref{p6.cond} are met by $(\E C_N, \E \chi_N, \E A_N, \E F_N)$, it follows by comparing \eqref{p6.1}-\eqref{p6.4} and \eqref{p6.5}-\eqref{p6.8} that 
\[
\Phi(\E C_N, \E \chi_N, \E A_N, \E F_N) \to 0
\]
as $N\to \infty$ uniformly on $[0,T]^2$. Since $\Phi$ is continuous with respect to the topology of uniform convergence, $\Phi$ evaluated at any limit point of the sequence $(\E C_N, \E \chi_N, \E A_N, \E F_N)_N$ is equal to zero, and the limit point also satisfies the boundary and symmetry conditions \eqref{p6.cond}. 

Thus all that is left is to prove \eqref{p6.5}-\eqref{p6.8}, which we do below.
\end{proof}

\begin{proof}[Proof of \eqref{p6.5} and \eqref{p6.6}] We first write down the integral form of \eqref{lan}:
\begin{align}\label{lan_int}
    x(s) = x(0) + \int_0^s \Big[G(x(u)) + \alpha (\Theta \otimes I_N) x(u)\Big] du  - \int_0^s f'\Big(\frac{\|x(u)\|^2}{2N}\Big)x(u) du + B(s).
\end{align}
We shall write $\bar 1 = 2$ and $\bar 2 = 1$. For $a, b \in \{1,2\}$, we take the inner product of $x^a(s)$ from the above expression with $B^b(t)$ to obtain \begin{align*}
    \frac{1}{N} x^a(s)\cdot B^b(t) & = \frac{1}{N}x^a(0)\cdot B^b(t) + \frac{1}{N}\int_0^s G^a(x(u))\cdot B^b(t) du  + \frac{\alpha}{N}\Theta^{a\bar a}\int_0^s x^{\bar a}(u)\cdot B^b(t)du \\
    & \qquad - \frac{1}{N}\int_0^s f'\Big(\frac{\|x(u)\|^2}{2N}\Big) x^a(u)\cdot B^b(t) du + \frac{1}{N}B^a(s)\cdot B^b(t)
\end{align*}
Assembling this entrywise computation into a matrix and taking averages yields 
\begin{align}\label{p6.9}
        \E \chi_N(s,t) & = \E \chi_N(0,t) + \int_0^s \Big[\E F_N(u,t) +\alpha\Theta\E \chi_N(u,t)\Big]du - \int_0^s\E \Big[f'\Big(\frac{\tr(K_N(u))}{2}\Big) \chi_N(u,t)\Big]du +(s\wedge t)I_2.
    \end{align}
Noting the polynomial growth of $f$ guaranteed by \eqref{cond}, we have from Corollary \ref{c1} that \begin{align}\label{p6.10}
    \E \Big[f'\Big(\frac{\tr(K_N(u))}{2}\Big) \chi_N(u,t)\Big] \simeq f'\Big(\frac{\tr(\E K_N(u))}{2}\Big) \E \chi_N(u,t)
\end{align}
so using this to estimate for the third term of \eqref{p6.9} yields \eqref{p6.6}.

\eqref{p6.5} is obtained in a similar manner. Indeed, by taking the inner product of $x^a(s)$ from \eqref{lan_int} with $x^b(t)$, assembling the entrywise computation into a matrix and taking averages  
    we get \begin{align*}
        \E C_N(s,t) & = \E C_N(0,t) + \int_0^s \Big(\E A_N(u,t) +\alpha\Theta \E C_N(u,t) - \E \Big[f'\Big(\frac{\tr (K_N(u))}{2}\Big) C_N(u,t)\Big]\Big)du + \E \chi_N(t,s)^T\\
        & \simeq \E C_N(0,t) + \int_0^s \Big(\E A_N(u,t) +\alpha\Theta \E C_N(u,t) - f'\Big(\frac{\tr (\E K_N(u))}{2}\Big) \E C_N(u,t)\Big) du + \E \chi_N(t,s)^T,
    \end{align*}
    where the final line utilizes Corollary \ref{c1} in the same spirit as \eqref{p6.10}. Taking a transposition of the above, swapping $s$ and $t$, and utilizing the symmetry condition $C_N(s,t) = C_N(t,s)^T$ we arrive at \eqref{p6.5}.
    \end{proof}
Consider the covariance kernel $k$ defined as follows. For each continuous path $x \in \cC([0,T])^{2N}$ and $s, t \in [0,T]$, we denote \begin{align}\label{defn:k}
    k(s,t) \coloneqq \E _J[G(x(s))(G(x(t)))^T],
\end{align}
where $\E_J$ denotes the expectation only in $J$. Following the computation in \cite[Lemma 3.2]{Ben}, we obtain that
\begin{align}\label{eqn:k}
    k(s,t) & = \frac{1}{N}(\xi''(C_N(s,t))\otimes 1_N)\odot \begin{pmatrix}
        x^1(t) (x^1(s))^T& x^2(t)(x^1(s))^T\\
        x^1(t) (x^2(s))^T & x^2(t)(x^2(s))^T
    \end{pmatrix} + \xi'(C_N(s,t))\otimes I_N
\end{align}
where $1_N$ is the square matrix of size $N$ with all entries equal to 1.

In order to establish \eqref{p6.7} and \eqref{p6.8}, we introduce stochastic integrals involving the covariance kernel $k$, which are to be understood according to the polynomial-kernel convention of \cite{Ben}. More precisely, in computing the stochastic integrals introduced below, we expand each of the polynomials $\xi, \xi'$ appearing in \eqref{eqn:k} into monomials of the form $x^{a_1}_{i_1}(s)\cdots x^{a_l}_{i_l}(s) x^{b_1}_{j_1}(t) \cdots x^{b_{l'}}_{j_{l'}}(t)$ for some non-random integers $a_k, b_k \in \{1,2\}$, $i_k, j_k \in [N]$ and $l, l' \geq 0$, place all factors evaluated at the fixed endpoint $s$ outside of the integral, and integrate the remaining factors in the usual It\^o sense.

Equipped with this convention, we denote, for $0 < \tau \leq T$, $y \in L^2(\{1, \ldots,2N\}, [0,\tau])$ \begin{align}\label{ker_k1}
    (k_\tau y)(s)\coloneqq \int_0^\tau k(s,t) y(t) dt,
\end{align}
for $s\leq \tau$, and extend this definition to stochastic integrals as follows. For $t > 0$, let $\F_t$ be the filtration generated by $\{x(u): 0\leq u\leq t\}$. We shall write \begin{align}\label{ker_k2}
    (k_\tau \circ dy)(s) = \int_0^\tau k(s,t) dy(t),
\end{align}
where $y$ is a continuous semi-martingale with respect to $\F_t$, such that for each $j$, $y_j$ is the sum of a continuous, square-integrable martingale and a continuous, square-integrable, finite variation process, both adapted to $\F_t.$ For two semi-martingales $y$ and $z$, if $X_N(s,t)\coloneqq y(s)\cdot z(t)/N$, we write \begin{align*}
    d_u X_N(u,t)= \frac{1}{N}dy(u)\cdot z(t), \qquad d_uX_N(s,u)\coloneqq \frac{1}{N}y(s)\cdot dz(u).
\end{align*}

We shall require the following Gaussian change of measure result for the proof of \eqref{p6.7} and \eqref{p6.8}.

\begin{lemma}\label{l1}
    For $\tau > 0$ fixed, let $V(t) = \E [G(x(t))| \F_\tau]$ and $W(t) = \E[B(t)|\F_\tau]$ for $t\leq \tau$. Then under $\gamma_{d(N,m)}\otimes \P^{2N}_{x(0), J}$,  there exists a version of $V, W$ such that \begin{align}
        U(t) & \coloneqq  x(t)-x(0) + \int_0^t \Big[f'\Big(\frac{\tr (K_N(u))}{2}\Big) - \alpha (\Theta \otimes I_N)\Big] x(u)du\label{defn:u}\\
        W(t) & = U(t) - \int_0^t V(u) du, \label{eqn:w}
        \end{align}
         are continuous $\F_t$-semimartingales, admitting a decomposition into a square-integrable continuous martingale and a continuous adapted finite-variation process whose total variation is square-integrable. Further for any $t \leq \tau$, \begin{align} 
        (k_\tau\circ dW)(t) & = V(t),\label{p7.1}\\
        (k_\tau \circ dU)(t) & =V(t) + (k_\tau V)(t)\label{p7.2}.
    \end{align} Also, we can choose a version of \begin{align*}
        \Gamma(s,t)\coloneqq \E [(G(x(s)) - V(s))(G(x(t)) - V(t))^T|\F_\tau]
    \end{align*}
    such that for $s, t\leq \tau$, \begin{align}\label{p7.3}
        \int_0^\tau k(s,u)\Gamma(u,t)du + \Gamma (s,t)= k(s,t).
    \end{align}
\end{lemma}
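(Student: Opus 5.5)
This is the analogue of \cite[Lemma 3.3]{Ben}, and the plan is to follow that Gaussian conditioning argument, checking that the non-reciprocal term $\alpha(\Theta\otimes I_N)x$ causes no new difficulty. The starting observation is that $U(t)$ in \eqref{defn:u} is a functional of the path $\{x(u):u\leq t\}$ alone. By \eqref{lan_int},
\begin{align*}
U(t)=\int_0^t G(x(u))\,du+B(t).
\end{align*}
The $\alpha$-term is absorbed into $U$ together with the Lagrange multiplier, since both are deterministic functions of the path. Conditioning on $\F_\tau$ then gives $U(t)=\int_0^t V(u)\,du+W(t)$, which is \eqref{eqn:w}.

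\textbf{Semimartingale property.} This is read off from \eqref{lan}. The process $x$ is a continuous semimartingale with martingale part $B$. The drift terms $f'(\tr(K_N(u))/2)x(u)$ and $\alpha(\Theta\otimes I_N)x(u)$ are continuous and adapted, and their total variation is square integrable by \eqref{p1.1}, the polynomial growth in \eqref{cond}, and the moment bounds \eqref{p2.1}. A version of $V$ is obtained from the explicit Gaussian regression formula below, and $W$ inherits the same decomposition from \eqref{eqn:w}.

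\textbf{Identities \eqref{p7.1}--\eqref{p7.3}.} The approach is a Gaussian change of measure. Under $\gamma_{d(N,m)}$, for a fixed continuous path $x$, the field $t\mapsto G(x(t))$ is centered Gaussian with covariance $k(s,t)$ given in \eqref{eqn:k}. By Girsanov, the joint law of $(J,x)$ has density with respect to $\gamma_{d(N,m)}\otimes(\text{law of }x\text{ under }J=0)$ proportional to
\begin{align*}
\exp\Big(\int_0^\tau G(x(u))\cdot dU(u)-\tfrac12\int_0^\tau\|G(x(u))\|^2du\Big),
\end{align*}
where $U$ is the martingale part under the reference measure. The Girsanov condition holds by the localization of Proposition \ref{PropUniq}, using the stopping times $\tau_r$ and then letting $r\to\infty$.

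The density is the exponential of a linear-quadratic functional of the Gaussian field $G$. Hence, conditionally on $\F_\tau$, the field $G$ is again Gaussian. Its conditional covariance $\Gamma$ and conditional mean $V$ satisfy
\begin{align*}
\Gamma+k_\tau\Gamma&=k,\\
V+k_\tau V&=k_\tau\circ dU.
\end{align*}
The first equation is \eqref{p7.3} and the second is \eqref{p7.2}. Substituting $dU=V\,dt+dW$ from \eqref{eqn:w} into \eqref{p7.2} and using linearity of $k_\tau\circ$ gives \eqref{p7.1}.

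To make the conditioning rigorous, I would finite-dimensionalize exactly as in \cite{Ben}. Since $G$ is linear in $J$, write $G(x(t))=\Phi(x(t))J$ with $\Phi$ polynomial in $x$. The posterior of $J$ is then Gaussian with precision $I+\int_0^\tau\Phi^T\Phi\,du$ and linear term $\int_0^\tau\Phi^T\circ dU$, where the stochastic integral follows the polynomial-kernel convention above. Applying $\Phi(x(s))$ and pushing forward yields $V$ and $\Gamma$, and the resolvent identities become matrix identities. A version continuous in $t$ is obtained directly from this formula.

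\textbf{Main obstacle.} I expect the main obstacle to be justifying the stochastic integrals $k_\tau\circ dU$ under the polynomial-kernel convention. The factors at the fixed endpoint $s$ are anticipating relative to the It\^o integral, and one must check that moving them outside the integral commutes with the conditioning. I would handle this by expanding $k$ into monomials and applying the finite-dimensional $J$-formula monomial by monomial. Square integrability would come from the localization \eqref{p2.6} and the moment bounds of Proposition \ref{p2}.
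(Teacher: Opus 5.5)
Your proposal is correct and follows the paper's route. You absorb the $\alpha$-term and the Lagrange multiplier into $U$, identify $\cL_\tau^N$ as the Girsanov density relative to the $J=0$ dynamics, and run the Gaussian-conditioning argument of \cite[Lemma 3.3]{Ben}, which you make explicit through the finite-dimensional Gaussian posterior of $J$. One small imprecision: under the annealed law, $B$ is not adapted to the path filtration $\F_t$ (it involves $J$), so the $\F_t$-martingale part of $U$ is the innovation process $U(t)-\int_0^t \E[G(x(u))|\F_u]\,du$ rather than $B$; the semimartingale conclusion still holds, for example by Stricker's theorem.
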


\begin{proof}
    From \eqref{lan}, we have that \begin{align}\label{p6.14}
            U(t) = \int_0^t G(x(u)) du + B(t).
        \end{align} 
        Continuity and integrability properties for the semi-martingales $U(t)$ and $W(t)$ follow straightforwardly from \eqref{defn:u} and \eqref{eqn:w}. The remaining parts of the lemma can be obtained by an argument similar to that presented in \cite[Lemma 3.3]{Ben} once we note that \begin{align*}
        \cL_\tau^N\coloneqq \exp\Big(\int_0^\tau G(x(t))\cdot dU(t) - \frac{1}{2}\int_0^\tau \|G(x(t))\|^2 dt\Big)
    \end{align*}
    defines the Radon-Nikodym derivative associated with the Girsanov transform, i.e., \begin{align*}
        \P_{x(0), J}^{2N}|_{\F_\tau} = \cL_\tau^N  \P_{x(0), 0}^{2N}|_{\F_\tau}.
    \end{align*}
\end{proof}

\begin{proof}[Proof of \eqref{p6.7}]  Defining, for $s,t\leq \tau$, \begin{align*}
    \wh A_N(s,t) = \frac{1}{N}\begin{pmatrix}
        V^1(s) \cdot x^1(t) & V^1(s) \cdot x^2(t)\\ V^2(s) \cdot x^1(t) & V^2(s)\cdot x^2(t)
    \end{pmatrix},
\end{align*}
so we have that $\wh A_N(s,t) = \E [A_N(s,t)|\F_\tau]$ and from \eqref{ker_k1}-\eqref{defn:u}, and \eqref{eqn:k} that for $a, b\in \{ 1,2\}$
\begin{align*}
    \frac{1}{N}(k_\tau V)^a(s)\cdot x^b(t) & = \int_0^\tau \Big[\Big(\xi''(C_N(s,u))\odot \wh A_N(u,s)^T\Big)C_N(u,t) + \xi'(C_N(s,u))\wh A_N(u,t)\Big]^{ab}du,\\
    \frac{1}{N}(k_\tau \circ dU)^a(s)\cdot x^b(t) & = \int_0^\tau f'\Big(\frac{\tr (K_N(u))}{2}\Big) \Big[\psi(C_N(s,u))C_N(u,t)\Big]^{ab}du - \alpha \int_0^\tau \Big[\D[C_N(s,u)]C_N(u,t)\Big]^{ab}du\\
    & \qquad + \int_0^\tau \Big[\Big(\xi''(C_N(s,u))\odot d_u C_N(s,u)\Big)C_N(u,t) + \xi'(C_N(s,u))d_uC_N(u,t)\Big]^{ab}du.
\end{align*}
Thus from \eqref{p7.2} we have that \begin{align}\label{p6.11}
    & \wh A_N(s,t) + \int_0^\tau \Big[\Big(\xi''(C_N(s,u))\odot \wh A_N(u,s)^T\Big)C_N(u,t) + \xi'(C_N(s,u))\wh A_N(u,t)\Big]du \notag \\
    & = \int_0^\tau \Big[\Big(\xi''(C_N(s,u))\odot d_u C_N(s,u)\Big)C_N(u,t) + \xi'(C_N(s,u))d_uC_N(u,t)\Big]du \notag \\
    & \qquad + \int_0^\tau f'\Big(\frac{\tr (K_N(u))}{2}\Big) \psi(C_N(s,u))C_N(u,t)du - \alpha \int_0^\tau \D[C_N(s,u)]C_N(u,t)du.
\end{align}
Using It\^o's lemma \cite[Lemma A.1]{Ben} we have \begin{align}\label{p6.111}
    & \int_0^\tau \Big[\Big(\xi''(C_N(s,u))\odot d_u C_N(s,u)\Big)C_N(u,t) + \xi'(C_N(s,u))d_uC_N(u,t)\Big]^{ab} \notag \\
    & = [\xi'(C_N(s,\tau))C_N(\tau, t)]^{ab} - [\xi'(C_N(s,0))C_N(0,t)]^{ab} - \frac{1}{N}C_N^{ab}(s,t)\int_0^\tau\sum_{l=1,2}\xi''(C_N^{al}(s,u))du \notag \\
    & \qquad - \frac{1}{2N}C_N^{aa}(s,s)\int_0^\tau \Big[\xi'''(C_N(s,u))C_N(u,t)\Big]^{ab} du
\end{align}
 Observe that $\wh A_N$ satisfies the same uniform moment bounds as in Proposition \ref{p2} and the same self-averaging property as in Proposition \ref{p3}. Thus, using Corollary \ref{c1} for the locally Lipschitz functions $\xi'(z_1)z_2$, $\xi''(z_1)z_2z_3$, $\xi'''(z_1)z_2z_3$, $\xi''(z_1)z_2$ and $f'(z_1)\psi(z_2)z_3$,$z_i \in \R$, where we allow for $(\wh A_N^{ab}(s,t))_{a,b=1,2}$ to also be the inputs to these functions as needed, and the uniform convergence with respect to the points $(s_j, t_j)\in [0,T]^2$ we have by \eqref{p6.11} and \eqref{p6.111} that
\begin{align*}
    & \E A_N(s,t) + \int_0^\tau \Big[\Big(\xi''(\E C_N(s,u))\odot \E A_N(u,s)^T\Big)\E C_N(u,t) + \xi'(\E C_N(s,u))\E A_N(u,t)\Big]du\\
    & \simeq \xi'(\E C_N(s,\tau))\E C_N(\tau, t) - \xi'(\E C_N(s,0))\E C_N(0,t) + \int_0^\tau f' \Big(\frac{\tr (\E K_N(u))}{2}\Big)\psi(\E C_N(s,u))\E C_N(u,t)du\\
    & \qquad - \alpha \int_0^\tau \D[\E C_N(s,u)]\E C_N(u,t)du.
\end{align*}
Setting $\tau = s\vee t$ and rearranging terms, we obtain \eqref{p6.7}.
\end{proof}

\begin{proof}[Proof of \eqref{p6.8}]
For brevity, we shall write $G(u)$ in place of $G(x(u))$. For $a, b \in \{1, 2\}$, let us define \begin{align*}
     \Gamma^{ab}(s,t) = \E [(G^a(s) - V^a(s))(G^b(t) - V^b(t))^T|\F_\tau].
 \end{align*}
Again, with the understanding that $\bar1 = 2$ and $\bar 2 = 1$,  we can write for $\tau =s\geq t$ that \begin{align*}
    \E F_N^{ab}(s,t) & = \frac{1}{N}\E [G^a(s)\cdot B^b(t)]  = \frac{1}{N}\E \Big[\E \Big[G^a(s)\cdot \Big(U^b(t) - \int_0^t G^b(u)du \Big)\Big|\F_s\Big]\Big]\\
    & = \frac{1}{N}\E \Big[ \E [G^a(s)|\F_s]\cdot\Big(U^b(t)  - \int_0^t V^b(u)du\Big)\Big] -\frac{1}{N}\E\Big[ \int_0^t \tr(\Gamma^{ba}(u,s))du\Big]
        \end{align*}
        where, in the last line, we used the fact that $\E [G^a(u) - V^a(u)|\F_s] = 0$ for $u\leq s$. Note that $V(t) = \E [G(t)|\F_s]$ and $W(t) = \E[B(t)|\F_s]$ are both $\F_s$-measurable, hence the kernel of $k_s$ is $\F_s$-measurable as well. Thus, using \eqref{eqn:w} and \eqref{p7.1}, we continue from the above display to write
        \begin{align}
            & \E F_N^{ab}(s,t) + \frac{1}{N}\E \int_0^t \tr(\Gamma^{ab}(s,u))du  =
            \frac{1}{N} \E \Big[ V^a(s)\cdot\Big(U^b(t)  - \int_0^t V^b(u)du\Big)\Big] = \frac{1}{N} \E \Big[ V^a(s)\cdot W^b(t)\Big]\notag \\
            & = \frac{1}{N}\E \Big[(k_s\circ dW)^a(s) \cdot W ^b(t)\Big]=\frac{1}{N}\E \Big[(k_s\circ dW)^a(s) \cdot B^b(t)\Big] = \frac{1}{N}\E \Big[\E [(k_s\circ dB)^a(s)|\F_s] \cdot B^b(t)\Big]\label{p6.12}.
        \end{align}
    Now, using \eqref{lan} we have \begin{align}
            & \frac{1}{N}(k_s\circ dB )^a(s)\cdot B^b(t) \notag \\
            & = \frac{1}{N}(k_s\circ dx)^a(s)\cdot B^b(t) + \frac{1}{N}\Big(k_s \Big[f'\Big(\frac{\tr(K_N(\cdot))}{2}\Big)x(\cdot)\Big]\Big)^a(s)\cdot B^b(t)- \frac{1}{N}(k_sG)^a(s)\cdot B^b(t) \notag \\
            & \qquad - \frac{\alpha}{N}\notag (k_s[(\Theta \otimes I_N)x(\cdot)])^a(s)\cdot B^b(t)\\
            & = \sum_{l=1,2}\int_0^s \Big[\xi''(C_N^{al}(s,u))\chi_N^{lb}(u,t) d_u C_N^{al}(s,u) + \xi'(C_N^{al}(s,u))d_u \chi_N^{lb}(u,t)\Big]-\alpha \int_0^s \Big[\D[C_N(s,u)]\chi_N(u,t)\Big]^{ab}du \notag \\
    & \qquad + \sum_{l=1,2}\int_0^s \Big[\Big(f'\Big(\frac{\tr(K_N(u))}{2}\Big)\psi(C_N^{al}(s,u))-\xi''(C_N^{al}(s,u))A_N^{la}(u,s)\Big) \chi_N^{lb}(u,t) - \xi'(C_N^{al}(s,u))F_N^{lb}(u,t)\Big]du.\label{p6.13}
            \end{align}

    Again, we use It\^o's lemma \cite[Lemma A.1]{Ben} to replace the stochastic integrals in \eqref{p6.13} by \begin{align*}
        & \xi'(C_N^{al}(s,s))\chi_N^{lb}(s,t) - \xi'(C_N^{al}(s,0))\chi_N^{lb}(0,t)- \frac{1}{2N}C_N^{aa}(s,s) \int_0^s\xi'''(C_N^{al}(s,u))\chi_N^{lb}(u,t)du \\
        & \qquad - \frac{1}{N}\chi_N^{ab}(s,t)\int_0^s\xi''(C_N^{al}(s,u))du. 
    \end{align*}

    Hence from \eqref{p6.12} we have that \begin{align}\label{p6.15}
        & \E F_N^{ab}(s,t) + \frac{1}{N}\E \int_0^t \tr(\Gamma^{ab}(s,u))du\notag\\
        &  = \E \Big[\xi'(C_N(s,s))\chi_N(s,t) - \xi'(C_N(s,0))\chi_N(0,t)\Big]^{ab}+ \E \int_0^s f'\Big(\frac{\tr(K_N(u))}{2}\Big)\Big[\psi(C_N(s,u))\chi_N(u,t)\Big]^{ab}du\notag\\
    & \qquad -\E  \int_0^s\Big[\Big(\xi''(C_N(s,u))\odot \wh A_N(u,s)^T\Big) \chi_N(u,t) + \xi'(C_N(s,u))F_N(u,t)\Big]^{ab}du  + \E D_N^{ab}(s,t)\notag \\
    & \qquad -\alpha \E \int_0^s \Big[\D[C_N(s,u)]\chi_N(u,t)\Big]^{ab}du -\frac{1}{2N}\E C_N^{aa}(s,s)\int_0^s\Big[\xi'''(C_N(s,u))\chi_N(u,t)\Big]^{ab} du \notag \\
    & \qquad - \frac{1}{N}\E \chi_N^{ab}(s,t)\int_0^s\sum_{l=1,2}\xi''(C_N^{al}(s,u))du
    \end{align}

where \begin{align*}
    D_N^{ab}(s,t)\coloneqq \frac{1}{N}\sum_{l=1,2}\int_0^s \xi'(C_N^{al}(s,u))[G^l(u) - V^l(u)]\cdot B^b(t)du.
\end{align*}
From \eqref{eqn:w} and \eqref{p6.14} we have that \begin{align*}
    B(t) = W(t) - \int_0^t [G(u)-V(u)]du, 
\end{align*}
and thus, \begin{align*}
D_N^{ab}(s,t) & = \frac{1}{N}\sum_{l=1,2}\int_0^s \xi'(C_N^{al}(s,u))[G^l(u) - V^l(u)]\cdot W^b(t)du \\
& \qquad - \frac{1}{N}\sum_{l=1,2}\int_0^s\int_0^t\xi'(C_N^{al}(s,u))[G^l(u) - V^l(u)]\cdot[G^b(v) - V^b(v)]dvdu.
\end{align*}
Taking expectation of the above display and using that $\E[G^l(u) - V^l(u)|\F_s] = 0$ for $u \leq s$, we have that \begin{align*}
    \E D_N^{ab}(s,t) & = -\frac{1}{N}\sum_{l=1,2}\E\int_0^s \xi'(C_N^{al}(s,u)) \int_0^t\tr(\Gamma^{lb}(u,v))dudv.
\end{align*}

We now claim that \begin{align}\label{p6.16}
        \int_0^t \E E_N^{ab}(s,v)dv\simeq 0,
    \end{align}

    where \begin{align*}
        E_N^{ab}(s,v)\coloneqq \xi'(C_N^{ab}(s,v)) - \frac{1}{N}\tr(\Gamma^{ab}(s,v)) - \frac{1}{N}\sum_{l=1,2}\int_0^s \xi'(C^{al}_N(s,u)) \tr(\Gamma^{lb}(u,v))du.
    \end{align*}
    
Assuming the claim, we establish \eqref{p6.8} from \eqref{p6.15} as follows. We observe that each term except $\E D_N^{ab}(s,t)$ appearing on the right side of \eqref{p6.15} is locally Lipschitz in the entries of the matrices $\wh A_N, F_N, \chi_N$ and $C_N$ for which we have established uniform moment bounds and concentration in Propositions \ref{p2} and \ref{p3} respectively. Thus, we apply Corollary \ref{c1} to these terms as we did in the final part of the proof of \eqref{p6.7}, and use \eqref{p6.16} to deduce that \begin{align*}
    \E F_N(s,t) & \simeq \xi' (\E C_N(s,s))\E \chi_N(s,t) - \xi'(\E C_N(s,0))\E \chi_N(0,t) -\int_0^t \xi' (\E C_N(s,u))du\\
    & \qquad  + \int_0^s f' \Big(\frac{\tr(\E K_N(u))}{2}\Big)\psi(\E C_N(s,u))\E \chi_N(u,t)du-\alpha \int_0^s \D[\E C_N(s,u)]\E \chi_N(u,t)du\\
    & \qquad -\int_0^s \Big[\Big(\xi''(\E C_N(s,u))\odot \E A_N(u,s)^T\Big)\E\chi_N(u,t) + \xi'(\E C_N(s,u))\E F_N(u,t)\Big]du,
\end{align*}
from which \eqref{p6.8} follows since $\E \chi_N(0,t) = 0$.

It remains to establish \eqref{p6.16}, which we do now. Let \begin{align*}
    k^{ab}(s,t)\coloneqq \E_J[G^a(s)(G^b(t))^T].
\end{align*} Then, from \eqref{eqn:k} we have that \begin{align*}
    k^{ab}(s,t) & = \frac{1}{N}\xi''(C_N^{ab}(s,t))x^b(t) (x^a(s))^T + \xi'(C_N^{ab}(s,t))I_N. 
\end{align*}
Together with this expression of $k^{ab}(s,t)$ and \eqref{p7.3} we have
\begin{align*}
   0 & = \frac{1}{N} \tr \Big( k^{ab}(s,t) - \Gamma^{ab}(s,t) - \sum_{l=1,2}\int_0^sk^{al}(s,u)\Gamma^{lb}(u,t)du\Big)\\
   & =\frac{1}{N} \xi''(C_N^{ab}(s,t))C_N^{ab}(s,t) + \xi'(C_N^{ab}(s,t)) - \frac{1}{N} \tr(\Gamma^{ab}(s,t)) -\frac{1}{N}\sum_{l=1,2}\int_0^s\tr[\xi'(C_N^{al}(s,u))\Gamma^{lb}(u,t)]du\\
   & \qquad - \frac{1}{N^2}\sum_{l=1,2}\int_0^s\xi''(C_N^{al}(s,u))\tr[x^l(u)(x^a(s))^T \Gamma^{lb}(u,t)]du
\end{align*}
Rearranging terms in the above display, and using that $x(u)$ and $x(s)$ are $\F_s$-measurable, we have \begin{align*}
    & E_N^{ab}(s,t) + \frac{1}{N}\xi''(C_N^{ab}(s,t))C_N^{ab}(s,t)\\
    & = \frac{1}{N^2}\sum_{l=1,2}\int_0^s\xi''(C_N^{al}(s,u))\tr[x^l(u)(x^a(s))^T\E[(G^l(u)-V^l(u))(G^b(t) - V^b(t))^T|\F_s]]du\\
    & = \sum_{l=1,2}\int_0^s \xi''(C_N^{al}(s,u))\E[(A_N^{la}(u,s) - \wh A_N^{la}(u,s))(A_N^{bl}(t,u) - \wh A_N^{bl}(t,u))|\F_s]du
\end{align*}
Since $C_N^{al}(s,u)$ is $\F_s$-measurable as well, we have \begin{align*}
    \E E_N^{ab}(s,t) & = \sum_{l=1,2}\int_0^s\E[\xi''(C_N^{al}(s,u))(A_N^{la}(u,s) - \wh A_N^{la}(u,s))(A_N^{bl}(t,u) - \wh A_N^{bl}(t,u))]du \\
    & \qquad - \frac{1}{N}\E[\xi''(C_N^{ab}(s,t))C_N^{ab}(s,t)]
\end{align*}
We invoke the uniform moment bounds and the concentration property established in Propositions \ref{p2} and \ref{p3} to claim that uniformly in $s\geq t$, all terms in the above display vanish as $N \to \infty$, and hence $\E E_N^{ab}(s,t) \simeq 0$. Utilizing the uniformity of the convergence, \eqref{p6.16} follows.
\end{proof}

\section{Differentiability and uniqueness of the limiting dynamics}\label{sec4}

\begin{lemma}[Differentiability]\label{lem}
    For $T < \infty$ fixed let $(C, \chi, A, F)$ be a solution of the system \eqref{p6.1}-\eqref{p6.4} in $(\cC([0,T]^2)^{2\times 2})^4$ subject to the conditions \eqref{p6.cond}. Then $\chi(s,t) = \int_0^t R(s,u) du$, where $C, R$ and $K(s) = C(s,s)$ satisfy the set of equations \eqref{l2.1}-\eqref{l2.3} and the conditions \eqref{l2.4}.
   
\end{lemma}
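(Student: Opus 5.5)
We follow the strategy of the $\alpha=0$ case in \cite{Ben}: first show that $\chi$ is differentiable in $t$, which produces the response $R$, and then differentiate \eqref{p6.1}--\eqref{p6.4} and eliminate $A$ and $F$. Throughout, the new terms $\alpha\Theta(\cdot)$ and $\alpha\D[C]$ are linear with bounded continuous coefficients, so they do not change the regularity arguments.

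\textbf{Step 1: the response $R$ and its equation.} From \eqref{p6.2}, $\chi$ is absolutely continuous in $s$. To get regularity in $t$, we view \eqref{p6.2} and \eqref{p6.4} as a closed linear system in the pair $(\chi(\cdot,t),F(\cdot,t))$ for fixed $t$, with $C$, $A$ and $K$ as given continuous coefficients. In \eqref{p6.4} the $t$-dependence enters through the following.
\begin{itemize}
\item The explicit forcing terms $-\int_0^{t\wedge s}\xi'(C(s,u))\,du$ and $(s\wedge t)I_2$. These are Lipschitz in $t$, with derivatives $-\xi'(C(s,t))\1_{t<s}$ and $\1_{t<s}I_2$.
\item The terms $\chi(u,u\wedge t)$ and $F(u,u\wedge t)$.
\end{itemize}

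We introduce the candidate derivatives $R(s,t)=\partial_t\chi(s,t)$ and $\Phi(s,t)=\partial_tF(s,t)$, both supported on $t<s$. Formally differentiating gives a linear Volterra system of the second kind in $s\in[t,T]$, with initial data at $s=t$ coming from the jump terms. This yields $R(t,t)=I_2$ and $\Phi(t,t)=-\xi'(K(t))+\xi'(K(t))R(t,t)=0$.

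Since the kernels are continuous, the system has a unique continuous solution $(R,\Phi)$. Next, $\int_0^tR(s,u)\,du$ and $\int_0^t\Phi(s,u)\,du$ solve the same integral system as $(\chi,F)$, with the same boundary values at $t=0$. Gronwall uniqueness for Volterra equations then gives $\chi=\int_0^tR$ and $F=\int_0^t\Phi$.

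Differentiating \eqref{p6.2} in $t$ and then in $s$ gives
\[
\partial_sR(s,t)=\Phi(s,t)+\alpha\Theta R(s,t)-f'(\tr K(s)/2)R(s,t).
\]
To obtain \eqref{l2.3} we must identify $\Phi(s,t)$ with $\int_t^s(\xi''(C(s,u))\odot R(s,u))R(u,t)\,du$. This requires knowing $A$ in terms of $C$ and $R$, which is done in Step 2.

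\textbf{Step 2: eliminating $A$.} The key identity, as in \cite{Ben}, is
\[
A(s,t)=\int_0^t\xi'(C(s,u))R(t,u)^Tdu+\int_0^s\big(\xi''(C(s,u))\odot R(s,u)\big)C(u,t)\,du,
\]
together with the analogous formula for $\Phi$. This is the Gaussian integration-by-parts form of the response.

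Our plan is to verify that the right-hand side, call it $\tilde A$, satisfies the linear equation \eqref{p6.3} with $C$ held fixed. Then uniqueness for that linear Volterra equation in $A$ forces $A=\tilde A$. Indeed, \eqref{p6.3} is linear in $A$ with kernels $\xi'(C)$ and $\xi''(C)\odot(\cdot)^T$. Its resolvent is controlled by Gronwall once we iterate in $s\vee t$, treating the $A(u,s)^T$ term together with its transpose companion.

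Checking that $\tilde A$ satisfies \eqref{p6.3} requires the following.
\begin{itemize}
\item Substituting \eqref{p6.1}, in differentiated form, for $\partial_uC$.
\item Integrating by parts in $u$ the expression $\xi'(C(s,u))C(u,t)$, which produces the boundary terms $\xi'(C(s,s\vee t))C(s\vee t,t)-\xi'(C(s,0))C(0,t)$.
\item Observing that the $\alpha$-contributions cancel exactly through $\D[C]=\xi'(C)\Theta-\xi''(C)\odot[C\Theta]$. This cancellation is why $\D$ appears: $\Theta$ acts on the right in the $\partial_u$ of the second argument and on the left in the first.
\item Likewise, the $f'\psi$ terms cancel against the $-f'C$ drift.
\end{itemize}

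\textbf{Step 3: the equations for $C$ and $K$.} Differentiating \eqref{p6.1} in $t$ and transposing yields an equation for $\partial_sC$ involving $A(s,t)$ and $R$. The $\chi$ term contributes $R(t,s)^T$, which vanishes for $s>t$. Substituting the formula for $A$ from Step 2 gives \eqref{l2.1}, where $\alpha\Theta C$ comes from $-\alpha C\Theta$ after transposition using $\Theta^T=-\Theta$.

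For $K(s)=C(s,s)$, we combine the one-sided derivatives. The diagonal gives
\[
K'(s)=\partial_1C(s,s)+\partial_1C(s,s)^T+\text{(jump of }\chi\text{ on the diagonal)}.
\]
The jump contributes $R(s,s)=I_2$, and this is the source of the $I_2$ term in \eqref{l2.2}. The $\alpha[\Theta K-K\Theta]$ term and the symmetric integrals then follow directly.

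\textbf{Main obstacle.} We expect Step 2 to be the hardest part: identifying $A$ and $\Phi$ in closed form and verifying, through lengthy but mechanical integration by parts, that the $\alpha$-terms organized by $\D$ cancel exactly. The regularity of $\chi$ in $t$ near the diagonal is a secondary technical point, handled by the Volterra argument in Step 1.
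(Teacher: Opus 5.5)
Your Step 1 (solving a Volterra system for the candidate pair $(R,\Phi)=(\partial_t\chi,\partial_tF)$ and identifying $\chi=\int_0^tR$ by uniqueness for $(\chi(\cdot,t),F(\cdot,t))$) is a valid alternative to the paper's resolvent elimination of $F$. Step 3 matches the paper. Step 2, however, does not work as planned, for two reasons.

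\emph{The verification is circular.} You cannot first ``verify that $\tilde A$ satisfies \eqref{p6.3} with $C$ held fixed''. To rewrite the boundary term $\xi'(C(s,s\vee t))C(s\vee t,t)-\xi'(C(s,0))C(0,t)$ as an integral you need $\partial_1C(u,t)$ and $\partial_2C(s,u)$. At this stage the only formulas for them come from differentiating \eqref{p6.1}, e.g.\ $\partial_1C(u,t)=R(t,u)^T+A(u,t)+\alpha\Theta C(u,t)-f'(\tr K(u)/2)C(u,t)$, and these contain the true, unknown $A$. Write $\Lambda[X](s,t)$ for the right-hand side of \eqref{p6.3} with $A$ replaced by $X$. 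Your computation then gives, for $s\geq t$,
\[
\Lambda[X](s,t)=\tilde A(s,t)+\int_0^s\xi'(C(s,u))\,(A-X)(u,t)\,du+\int_0^s\Big(\xi''(C(s,u))\odot(A-X)(u,s)^T\Big)C(u,t)\,du .
\]
So $\Lambda[\tilde A]=\tilde A$ is equivalent to what you are trying to prove. Using \eqref{l2.1} for $\partial_sC$ instead would also be circular, since you derive \eqref{l2.1} from the formula for $A$.

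\emph{The uniqueness step is unsupported.} Equation \eqref{p6.3} is not of Volterra type in $A$: for $s<t$ the integrals run over $u\in[0,t]$ and involve $A(u,t)$ and $A(u,s)$ with $u>s$. For the difference $D$ of two solutions, iterating in $s\vee t$ only gives $\sup_{s\vee t\leq\tau}\|D\|\leq c\,\tau\sup_{s\vee t\leq\tau}\|D\|$, which closes only for small $\tau$.

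The missing observation is that no uniqueness argument is needed. The display above with $X=A$, which is exactly the content of \eqref{p6.3}, reads $A=\Lambda[A]=\tilde A$. Concretely, substitute the differentiated \eqref{p6.1}, with the true $A$, into the chain-rule identity for $\xi'(C(s,u))C(u,t)$ and insert it into \eqref{p6.3}. The terms $\int_0^s\xi'(C(s,u))A(u,t)\,du$ and $\int_0^s(\xi''(C(s,u))\odot A(u,s)^T)C(u,t)\,du$ cancel exactly, together with the $f'\psi$ and $\alpha\D$ terms, and the explicit formula for $A$ drops out. This is the paper's argument.

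The same applies to $F$. Apply the chain rule to $\xi'(C(s,u))\chi(u,t)$, with $\partial_1\chi$ taken from \eqref{p6.2} (which contains the true $F$), and compare with \eqref{p6.4}. All $F$- and $A$-terms cancel, giving $F(s,t)=\int_0^s(\xi''(C(s,u))\odot R(s,u))\chi(u,t)\,du$. Hence $\Phi(s,t)=\int_t^s(\xi''(C(s,u))\odot R(s,u))R(u,t)\,du$, which is what your Step 1 needs to reach \eqref{l2.3}.
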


\begin{proof}
Consider the following operator on $\cC([0,T])^{2\times 2}$:
 \begin{align*}
     [k_CM](s) \coloneqq -\int_0^s \xi'(C(s,u))M(u)du.
 \end{align*}

Then from \eqref{p6.4} we have that \begin{align*}
    F(s,t) & = [k_CF(\cdot, t)](s) + r(s,t)
\end{align*}
where \begin{align*}
    r(s,t) & = \xi'(K(s))\chi(s,t) + \int_0^s \Big[f' \Big(\frac{\tr(K(u))}{2}\Big)\psi(C(s,u))-\xi''(C(s,u))\odot A(u,s)^T\Big]\chi(u,t)du\\
    & \qquad -\int_0^{s\wedge t} \xi'(C(s,u))du - \alpha \int_0^s\D[C(s,u)]\chi(u,t)du.
\end{align*}
It will be useful to rewrite \begin{align}\label{l2.5}
    r(s,t) = \xi'(K(s))\chi(s,t) + \int_0^s\kappa_1(s,u)\chi(u,t)du - \int_0^{s\wedge t} \xi'(C(s,u))du
\end{align}
where $\kappa_1$ is uniformly bounded $2 \times 2$ matrix and only depends on the functions $C$ and $A$.

Owing to the continuity and uniform boundedness of $\xi'(C(s,u))$ on $[0,T]^2$, it follows from Picard iterations (splitting $[0,T]$ into sufficiently small intervals to guarantee the convergence of the series $\sum_{n\geq 0} k_C^n$) that \begin{align*}
    F(s,t) = \sum_{n\geq 0}[k_C^n r(\cdot, t)](s) = r(s,t) + \int_0^s \kappa_C(s,u)r(u,t)du
\end{align*}
for some uniformly bounded kernel $\kappa_C$. Plugging this in \eqref{p6.2} and applying Fubini's theorem we have that \begin{align*}
    \chi(s,t) & = (s\wedge t)I_2 + \int_0^s\Big(r(u,t) + \int_0^u \kappa_C(u, v) r(v,t)dv\Big)du + \int_0^s \kappa_2(u) \chi(u, t)du\\
    & =  (s\wedge t)I_2 + \int_0^s \kappa_3(s,u)r(u,t)du + \int_0^s \kappa_2(u) \chi(u, t)du,
\end{align*}
where $\kappa_2$ and $\kappa_3$ are uniformly bounded $2 \times 2$ matrices that depend only on $C$. Plugging in \eqref{l2.5} in the display above and using Fubini's theorem again, we have
\begin{align}\label{eqn:6.-1}
    \chi(s,t) & = (s\wedge t)I_2 + \int_0^s \kappa_4(s,u)\chi(u,t)du + \int_0^s\kappa_5(s,u)\Big(\int_0^{u\wedge t}\xi'(C(u,v))dv\Big)du,
\end{align}
where $\kappa_4$ and $\kappa_5$ are uniformly bounded $2 \times 2$ matrices that depend only on $C$ and $A$. Thus, considering the operator \begin{align*}
    [k_{C,A}M](s) = \int_0^s \kappa_4(s,u)M(u)du,
\end{align*}
we can write \eqref{eqn:6.-1} as \begin{align*}
    \chi(s,t) = [k_{C,A}\chi(\cdot, t)](s) + (s\wedge t) I_2 + \int_0^s\kappa_5(s,u)\Big(\int_0^{u\wedge t}\xi'(C(u,v))dv\Big)du,
\end{align*}
and therefore, by Picard's iterations and Fubini's theorem, we have \begin{align*}
    \chi(s,t) & =  (s\wedge t)I_2 + \int_0^s\Big[(u\wedge t) \kappa_6(s,u) + \kappa_7(s,u)\Big(\int_0^{u\wedge t}\xi'(C(u,v))dv\Big)\Big]du 
\end{align*}
for $\kappa_6$ and $\kappa_7$ uniformly bounded $2 \times 2$ matrices that depend only on $C$ and $A$. For $s\geq t$, , since $s\wedge t = t$, the function $\chi(s,t)$ is continuously differentiable in $t$ in the regime $s\geq t$, and $\partial_t\chi(s,t)$ is given by \begin{align*}
    R(s,t) = I_2 + \int_t^s\Big[\kappa_6(s,u) + \kappa_7(s,u)\xi'(C(u,t))\Big]du,
\end{align*}
with $R(s,s) = I_2$ for all $s\geq 0$. Since $\chi(s,t) = \chi(s,s)$ whenever $s<t$, we have that $R(s,t) = \partial_2\chi(s,t) = 0$ for $t> s$. Note that $R$ is uniformly bounded because  $\kappa_6, \kappa_7$ and $C$ also enjoy the same property.

We let $s>t$. Differentiating \eqref{p6.1} in $t$ and using that $R(s,t) = \partial_2\chi(s,t)$ we have that \begin{align}\label{l2.6}
    \partial_2C(s,t) & = R(s,t) + A(t,s)^T -\alpha C(s,t)\Theta- f'\Big(\frac{\tr (K(t))}{2}\Big) C(s,t).
\end{align}
Taking the transpose of the above display and swapping $s$ and $t$, we have \begin{align}\label{l2.7}
    \partial_1C(s,t) & = R(t,s)^T + A(s,t) + \alpha \Theta C(s,t) - f'\Big(\frac{\tr(K(s))}{2}\Big)C(s,t)
\end{align}
We plug in the above two expressions in the identity\begin{align*}
   & \xi'(C(s,s)) C(s, t) - \xi'(C(s,0))C(0,t)\\
   & = \int_0^s\Big[\xi'(C(s,u))\partial_1C(u,t) + \Big(\xi''(C(s,u))\odot\partial_2C(s,u)\Big)C(u,t)\Big]du
\end{align*}
and use \eqref{p6.3} to obtain that \begin{align}\label{l2.8}
    A(s,t) & = \int_0^s\Big[\xi'(C(s,u))R(t,u)^T + \Big(\xi''(C(s,u))\odot R(s,u)\Big)C(u,t)\Big]du.
\end{align}
Putting this back in \eqref{l2.7} and using that $R(t,s)^T = 0$ for $s>t$, we have \begin{align*}
    \partial_1C(s,t) & = - f'\Big(\frac{\tr(K(s))}{2}\Big)C(s,t)  + \int_0^t\xi'(C(s,u))R(t,u)^Tdu + \int_0^s\Big(\xi''(C(s,u))\odot R(s,u)\Big)C(u,t)du\\
    & \qquad + \alpha \Theta C(s,t),
\end{align*}
which is \eqref{l2.1} for $s>t$ and $\beta = 1$.

To establish \eqref{l2.2}, we note that for any $u > 0$, \begin{align*}
    K(s) - K(s-u) & = C(s,s) - C(s-u, s) + C(s-u,s) - C(s-u, s-u)\\
    & = \int_{s-u}^s [\partial_1C(v,s) + \partial_2C(s-u, v)]dv.
\end{align*}
Noting that $R(s,v) = 0$ for $v > s$, we plug \eqref{l2.6} and \eqref{l2.7} in the above equation. Then, using the fact that $A$, $C$, are continuous we divide the expression obtained by $u$ and send $u\to 0$. Thus, we obtain \begin{align*}
   K'(s) & = I_2 + A(s,s) + A(s,s)^T + \alpha[\Theta K(s) -K(s) \Theta] - 2f'\Big(\frac{\tr(K(s))}{2}\Big)K(s),
\end{align*}
where we have also used the fact that $R(s,s) = I_2$. Plugging in the expression for $A(s,s)$ from \eqref{l2.8}, we obtain \eqref{l2.2}:\begin{align*}
    K'(s) & = I_2 + \alpha[\Theta K(s) -K(s) \Theta] - 2f'\Big(\frac{\tr(K(s))}{2}\Big)K(s) + \int_0^s \Big[\xi'(C(s,u))R(s,u)^T + R(s,u)\xi'(C(u,s))\Big]du\\
    & \qquad + \int_0^s\Big[\Big(\xi''(C(s,u))\odot R(s,u)\Big)C(u,s) + C(s,u)\Big(\xi''(C(u,s))\odot R(s,u)^T\Big)\Big]du.
\end{align*}

Let $s>t$. To obtain \eqref{l2.3}, we differentiate \eqref{p6.2} in $s$ to get \begin{align}\label{l2.9}
    \partial_1 \chi(s,t) &  = F(s,t) + \alpha \Theta \chi(s,t) - f'\Big(\frac{\tr(K(s))}{2}\Big)\chi(s,t) + \1_{s<t}I_2.
\end{align}
We plug in \eqref{l2.6} and \eqref{l2.9} in the identity \begin{align*}
    \xi'(K(s))\chi(s,t)  & = \xi'(C(s,s)) \chi(s,t) - \xi'(C(s,0))\chi(0,t)\\
    & = \int_0^s\Big[\xi'(C(s,u))\partial_1\chi(u,t) + \Big(\xi''(C(s,u))\odot\partial_2 C(s,u)\Big)\chi(u,t)\Big]du
\end{align*}
and obtain from \eqref{p6.4} that 
\begin{align*}
    F(s,t) & = \int_0^s \Big(\xi''(C(s,u))\odot R(s,u)\Big)\chi(u,t) du .
\end{align*}
Let \begin{align*}
    E(s,t) & = \int_0^s\Big(\xi''(C(s,u))\odot R(s,u)\Big)R(u,t) du.
\end{align*}
Note that for $s>t$, the integrals in the display above can be restricted to $[t,s]$ owing to the fact that $R(u,t) = 0$ for $u<t$.

By Fubini's theorem we therefore have, $F(s,t) = \int_0^t E(s,u)du$ for all $s\geq t$. Further, since $F(s,t) = F(s,s)$ for $t > s$, it follows that $F(s,t) = \int_0^{s\wedge t} E(s,u)du$ for all $s, t\in [0,T]$. Hence using that $\chi(s,t) = \int_0^t R(s,u)du$, we have from \eqref{p6.2} that for $s>t$ \begin{align*}
    \int_0^tR(s,u)du &  = tI_2 + \int_0^s \int_0^{u\wedge t} E(u,v)dvdu + \int_0^s\Big[\alpha \Theta \chi(u,t) - f'\Big(\frac{\tr(K(u))}{2}\Big)\chi(u,t)\Big]du\\
    & = tI_2 + \int_0^t \int_v^s E(u,v)dudv + \int_0^t\int_0^s\Big[\alpha \Theta R(u,v) - f'\Big(\frac{\tr(K(u))}{2}\Big)R(u,v)\Big]dudv
\end{align*}
Taking a derivative in $t$ yields
\begin{align*}
    R(s,t) & = I_2 + \int_t^s E(u,t)du + \int_0^s\Big[\alpha \Theta R(u,t) - f'\Big(\frac{\tr(K(u))}{2}\Big)R(u,t)\Big]du.
\end{align*}
Taking the derivative in $s$ now yields \begin{align*}
    \partial_1R(s,t) & = E(s,t) + \alpha \Theta R(s,t) - f'\Big(\frac{\tr(K(s))}{2}\Big)R(s,t)
\end{align*}
and replacing the expression of $E(s,t)$ in the display above yields \eqref{l2.3}.
\end{proof}

\begin{prop}[Uniqueness]\label{prop_uniq}
    Let $T < \infty$ be fixed and $\Pi_T = \{(s,t) \in [0,T]^2: t\leq s\}$. Given $K(0)$, there exists at most one solution $(C, R, K) \in (\cC(\Pi_T)^{2\times 2})^2 \times \cC([0,T])^{2\times 2}$ satisfying the equations \eqref{l2.1}-\eqref{l2.3} and the conditions \eqref{l2.4}.
\end{prop}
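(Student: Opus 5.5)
Suppose $(C,R,K)$ and $(\wt C,\wt R,\wt K)$ are two solutions on $\Pi_T$ with the same $K(0)$. I will show they agree by a Gr\"onwall argument on the combined differences. First, uniform bounds. Both triples are continuous on the compact sets $\Pi_T$ and $[0,T]$, so there is $M<\infty$ with $\|C\|,\|\wt C\|,\|R\|,\|\wt R\|,\|K\|,\|\wt K\|\le M$ there. On $\{\|z\|\le M\}$ the polynomials $\xi',\xi''$ are Lipschitz. Since $f'$ is locally Lipschitz, $u\mapsto f'(\tr(u)/2)$ is also Lipschitz on $\{\|u\|\le M\}$. Call the common Lipschitz constant $L=L(M)$. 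For $s\in[0,T]$ define
\begin{align*}
\Delta(s) \coloneqq \sup_{0\le t\le u\le s}\Big(\|C(u,t)-\wt C(u,t)\| + \|R(u,t)-\wt R(u,t)\|\Big) + \sup_{u\le s}\|K(u)-\wt K(u)\|.
\end{align*}
This is finite, nondecreasing, and $\Delta(0)=0$, because $C(0,0)=K(0)=\wt K(0)$ and $R(0,0)=\wt R(0,0)=I_2$.

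Next, integral forms and the difference bounds. Integrate \eqref{l2.3} in $s$ from $t$ (using $R(t,t)=I_2$), integrate \eqref{l2.2} from $0$, and integrate \eqref{l2.1} in $s$ from $t$ (using $C(t,t)=K(t)$). Every integral on the right-hand sides only involves $C(u,v)$ with $u,v\le s$. Wherever $u<v$ we use $C(u,v)=C(v,u)^T$, so all arguments lie in $\Pi_s$. The term $\int_0^t\xi'(C(s,u))R(t,u)^Tdu$ only needs $R$ on $\Pi_t\subset\Pi_s$. Subtract the tilde equations and bound each difference with the triangle inequality, the Lipschitz estimates, and the bounds by $M$; Hadamard products and matrix products are bilinear, so each product difference is at most a constant times the differences of its factors. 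This gives, for $(s,t)\in\Pi_T$,
\begin{align*}
\|R(s,t)-\wt R(s,t)\| + \|C(s,t)-\wt C(s,t)\| + \|K(s)-\wt K(s)\| \le c\,\|K(t)-\wt K(t)\| + c\int_0^s \Delta(u)\,du ,
\end{align*}
where $c$ depends only on $M,T,\alpha$ and $\xi$. For example, the inner integrand difference at time $u$ is bounded by $c\,\Delta(u)$ after taking the supremum over the second variable. The term $\|K(t)-\wt K(t)\|$ comes from the initial value $C(t,t)=K(t)$ in the integral form of \eqref{l2.1}.

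Closing the estimate is the main obstacle. The problem is the boundary term $\|K(t)-\wt K(t)\|$ in the $C$ estimate. I handle it by first using the $K$ estimate alone: $\|K(t)-\wt K(t)\|\le c\int_0^t\Delta(u)\,du\le c\int_0^s\Delta(u)\,du$ for $t\le s$. Substituting this back and taking the supremum over $(s',t)\in\Pi_s$ gives
\begin{align*}
\Delta(s)\le c'\int_0^s\Delta(u)\,du .
\end{align*}
Gr\"onwall's inequality then yields $\Delta\equiv0$ on $[0,T]$. Hence $C=\wt C$ and $R=\wt R$ on $\Pi_T$, and $K=\wt K$ on $[0,T]$. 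The extension of $C$ to $[0,T]^2$ through $C(s,t)=C(t,s)^T$ is also unique.

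One remaining point must be checked: an integrand at time $u$ must only involve the functions on $\Pi_u$ (or $\Pi_s$), so that the supremum defining $\Delta$ closes the estimate. The one cross term, $C(u,t)$ with $u<t$ in the last integral of \eqref{l2.1}, is rewritten as $C(t,u)^T$. Since $t\le s$, this lies in $\Pi_s$ and is controlled by $\Delta(s)$ rather than by $\Delta(u)$. This makes the bound a Volterra-type inequality in $s$ with a $\Delta(s)$ factor multiplied by $\int_0^s\|R(s,u)\|\,du$. To avoid this, I bound that term instead by $c\int_t^s\Delta(u)\,du$ plus $c(s-t)\Delta(s)$. On a short interval with $c(s-t)\le1/2$ the last term can be absorbed into the left-hand side. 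I then iterate over finitely many subintervals of length $h=1/(2c)$, the same way the proof of Lemma~\ref{lem} splits $[0,T]$ for the Picard iterations.
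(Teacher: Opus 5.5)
Your argument is correct, and it takes a more elementary route than the paper. You run one Gr\"onwall argument on the joint sup-norm $\Delta(s)$ over the growing triangles $\Pi_s$. It uses only two facts: both solutions are bounded, being continuous on compact sets, and every nonlinearity, including the quadratic term $(\xi''(C)\odot R)R$, is locally Lipschitz.

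The paper instead eliminates $R$ first. It conjugates $R$ to $H$ as in \eqref{defn_H} and uses the binary-tree expansion of Lemma~\ref{lem:H} to get the explicit bound \eqref{diff_r} of $\Delta R$ in terms of $\Delta C$ and $\Delta K$. It then closes a scalar Gr\"onwall inequality for the $L^1$-type quantity $\wb C(u)=\int_0^u\|\Delta C(u,v)\|dv$. For uniqueness alone your route suffices and is shorter. The paper's heavier route builds machinery that controls $R$ through $H$ using only bounds on $C$ and the scalar factor $e^{-\int \mu}$. That machinery is reused later, where bounds on $R$ uniform in $\gamma$ or in time must be derived rather than assumed: Lemma~\ref{lem:gamma_bounds}, Lemma~\ref{exp_decay_cr}, and the well-definedness and contraction of $\Xi$ in Section~\ref{sec6}.

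Your final paragraph addresses a non-issue and should be dropped. Take the integrated form of \eqref{l2.1} for $\Delta C(s,t)$, with outer integration variable $s'\in[t,s]$. The cross term $C(u,t)=C(t,u)^T$ with $u<t$ has $(t,u)\in\Pi_t\subset\Pi_{s'}$. So the whole integrand at $s'$ is bounded by $c\,\Delta(s')$, which is exactly what your second paragraph already used. No absorption or subdivision into intervals of length $h$ is needed. As written, the subdivision would also need extra care: pairs $(s,t)$ with $t$ in an earlier block have $s-t>h$, so the condition $c(s-t)\le 1/2$ fails unless you first cut the integral down to the current block using agreement on the earlier ones.
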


\begin{proof}
    Let $(C,R,K)$ and $(\wt C,\wt R,\wt K)$ be two solutions of \eqref{l2.1}--\eqref{l2.3} satisfying \eqref{l2.4}. The proof proceeds in three steps. First, we derive a Lipschitz estimate for  $R$ in terms of $C$ and $K$. Second, we use this estimate to obtain integro-differential inequalities for the differences of $C - \wt C$ and $K-\wt K$. Finally, these inequalities are closed by a Gr\"onwall-type argument, which forces all three differences to vanish identically, and thus the two solutions coincide. 
    
      We start with the first step, where we establish the Lipschitz property of $R$ in $C$ and $K$. More precisely, we show that there exists some finite constant $\eta> 0$ that may depend on $T$, the model parameters, and the bounds on the two solutions under comparison such that for all $(s,t) \in \Pi_T$, \begin{align}\label{diff_r}
        \|\Delta R(s,t)\| \leq \eta\Big(\int_t^s \int_u^s \|\Delta C(v,u)\| dvdu + \int_t^s \|\Delta K(u)\| du\Big),
    \end{align}
    where $\Delta R(s,t)\coloneqq R(s,t) - \wt R(s,t)$, and $\Delta C$ and $\Delta K$ are defined analogously. We shall use the same notation for the differences of other matrices and operators appearing later in the proof. In what follows, the value of $\eta$ may change from line to line.
    
    We start by considering \begin{align}\label{defn_H}
       H(s,t) \coloneqq e^{-X(s)} R(s,t) e^{X(t)}, \text{ where } X(t) \coloneqq \int_0^t \Big[-f'\Big(\frac{\tr (K(u))}{2}\Big) I_2 + \alpha \Theta\Big] du. 
   \end{align}
   Then, from \eqref{l2.3} we have that $H(s,t)$ satisfies \begin{align}\label{eqn:diff_h}
       \partial_s H(s,t) & = \int_t^s \A[H(s,u)] H(u,t) du,
   \end{align}
where $\A$ is the operator  \begin{align}\label{defn_a}
\A[H(s,t)] & = e^{-X(s)}\Big(\xi''(C(s,t))\odot [e^{X(s)}H(s,t) e^{-X(t)}]\Big)e^{X(t)} \notag \\
& = e^{-\alpha s \Theta}\Big(\xi''(C(s,t))\odot [e^{\alpha s\Theta}H(s,t) e^{-\alpha t \Theta}]\Big)e^{\alpha t \Theta}.
\end{align}
We integrate this equation, noting that $H(t,t) = I_2$, and use Fubini to obtain
\begin{align}\label{eqn:int_H}
    H(s,t)
    = I_2 + \int_t^s \int_u^s \A[H(v,u)]H(u,t)dvdu.
\end{align}
In Lemma \ref{lem:H} we derive an explicit representation of $H(s,t)$, and show that $H$ satisfies a Lipschitz property in $C$, see \eqref{diff_h}. Observing that $K$ is uniformly bounded and $f'$ is locally Lipschitz, \eqref{diff_r} follows immediately from \eqref{defn_H} and \eqref{diff_h}.

We now proceed to the second step. Integrating \eqref{l2.2} we obtain \begin{align*}
    K(s) & = sI_2 + K(0) + \alpha \int_0^s [\Theta K(u)- K(u) \Theta]du -2\int_0^s f' \Big(\frac{\tr(K(u))}{2}\Big) K(u)du \\
    & \qquad + \int_0^s \int_0^u\Big[\xi'(C(u,v))R(u,v)^T + R(u,v) \xi'(C(v,u))\Big]dvdu\\
    & \qquad + \int_0^s \int_0^u \Big[\Big(\xi''(C(u, v))\odot R(u,v)\Big)C(v,u) + C(u,v)\Big(\xi''(C(v,u))\odot R (u,v)^T\Big)\Big]dvdu.
\end{align*}
Since $K(0) = \wt K(0)$, we have by using the boundedness of $C, K$ and the local Lipschitz property of $f'$ that \begin{align*}
    \|\Delta K(s)\| & \leq \eta\Big[ \int_0^s \|\Delta K (u)\| du + \int_0^s\int_0^v  \Big(\|\Delta C(u,v)\| + \|\Delta R(u,v)\|\Big) dudv\Big]
\end{align*}
and plugging in \eqref{diff_r} above we get \begin{align}\label{diff_k}
    \|\Delta K(s)\| & \leq \eta\Big[ \int_0^s \|\Delta K (u)\| du + \int_0^s\int_u^s\|\Delta C(v,u)\|dvdu \Big].
\end{align}
Similarly, we integrate \eqref{l2.1} to obtain \begin{align*}
    C(s,t) & = K(t) + \alpha \Theta \int_t^s C(u,t)du- \int_t^s f' \Big(\frac{\tr (K(u))}{2}\Big) C(u,t) du + \int_t^s \int_0^t \xi'(C(u,v))R(t,v)^T dvdu\\
    & \qquad + \int_t^s \int_0^t \Big(\xi''(C(u,v))\odot R(u,v)\Big)C(v,t)dvdu + \int_t^s \int_t^u \Big(\xi''(C(u,v))\odot R(u,v)\Big)C(v,t)dvdu. 
\end{align*}
Again, using the boundedness of $C, K$ and the local Lipschitz property of $f'$ we get \begin{align}\label{diff_c}
    \|\Delta C(s,t)\|& \leq \eta\Big[ \|\Delta K(t)\| + \int_t^s \|\Delta C(u,t)\| du + \int_t^s \|\Delta K(u)\|du  + \int_t^s \int_0^t\|\Delta C(u,v)\|dvdu \notag \\
    & \qquad + \int_t^s \int_0^t\|\Delta R(t,v)\|dvdu+\int_t^s \int_0^t\|\Delta R(u,v)\|dvdu  + \int_t^s \int_0^t\|\Delta C(v,t)\|dvdu \notag \\
    & \qquad + \int_t^s \int_t^u\|\Delta C(u,v)\|dvdu + \int_t^s \int_t^u\|\Delta R(u,v)\|dvdu + \int_t^s \int_t^u\|\Delta C(v,t)\|dvdu\Big].
\end{align}

We now come to the final step of the proof. Let \begin{align*}
    \wb C(u) \coloneqq \int_0^u \|\Delta C(u,v)\|dv = \int_0^u \|\Delta C(v,u)\|dv.
\end{align*}
Applying Gr\"onwall's lemma to \eqref{diff_k}, we have \begin{align}\label{gron_k}
    \|\Delta K(s) \| & \leq \eta\int_0^s\int_u^s\|\Delta C(v,u)\|dvdu + \eta^2 \int_0^s e^{\eta(s-w)} \int_0^w\int_u^w\|\Delta C(v,u)\|dvdudw \notag \\
    & \leq ce^{cs} \int_0^s\int_u^s\|\Delta C(v,u)\|dvdu \leq \eta \int_0^s \wb C(u)du.
\end{align}
This, together with \eqref{diff_r} and an application of Fubini's theorem implies that \begin{align}\label{diff_r_bdd}
    \|\Delta R(s,t)\| \leq  \eta\int_0^s \wb C(u)du.
\end{align}
Therefore, by integrating \eqref{diff_c} in $t$ we have using \eqref{gron_k} and Fubini's theorem that \begin{align*}
 & \int_0^s\Big(\|\Delta K(t)\| + \int_t^s \|\Delta K(u)\|du\Big)dt \leq \eta \int_0^s \wb C(u)du,\\
    & \int_0^s \int_t^s \Big(\int_0^t + \int_t^u\Big) \|\Delta C(u,v)\|dvdudt  = \int_0^s \int_t^s \int_0^u\|\Delta C(u,v)\|dvdudt \leq \eta\int_0^s \wb C(u)du\\
    & \int_0^s \int_t^s\Big[\|\Delta C(u,t)\| + \Big(\int_0^t + \int_t^u\Big) \|\Delta C(v,t)\|dv\Big]dudt=\int_0^s \Big[\wb C(u) + 2\int_0^u \wb C(v)dv\Big]du\leq \eta\int_0^s \wb C(u)du,
    \end{align*}
    and additionally, using \eqref{diff_r_bdd}, we have
    \begin{align*}
    \int_0^s \int_t^s \Big[\int_0^t\|\Delta R(t,v)\|dv + \Big(\int_0^t + \int_t^u \Big)\|\Delta R(u,v)\|dv\Big]dudt & \leq \eta \int_0^s \int_0^u \|\Delta R(u,v)\|dvdu \leq \eta\int_0^s \wb C(u)du.
\end{align*}

Assembling the above estimates, we get from \eqref{diff_c} that \begin{align}\label{in:grn}
    \wb C(s) \leq \eta\int_0^s\wb C(u)du,
\end{align}
and hence by Gr\"onwall's lemma, we have that $\wb C(s) = 0$ for all $s \in [0,T]$. Hence, from \eqref{diff_k} and \eqref{diff_r_bdd} it follows that$\Delta K(s) = \Delta R(s,t) = 0$ for all $(s,t) \in \Pi_T$. Finally, from \eqref{diff_c} we have that $\Delta C(s,t) = 0$ as well for all $(s,t) \in \Pi_T$.
\end{proof}

We now complete the proof of Theorem \ref{thm1}.

\begin{proof}[Proof of Theorem \ref{thm1}]
    From Proposition \ref{p2} we have that the sequence $((\E A_N, \E F_N, \E C_N, \E \chi_N))_N$ is precompact with respect to the uniform topology on $[0,T]^2$ and hence admits a limit point, say $(A, F, C, \chi)$. By Proposition \ref{p6}, $(A, F, C, \chi)$ satisfies \eqref{p6.1}-\eqref{p6.4} with the boundary conditions \eqref{p6.cond}, and in turn by Lemma \ref{lem}, we obtain $(C, R = \partial_t\chi)$ associated to this limit point that satisfy the system \eqref{l2.1}-\eqref{l2.3} subject to the conditions \eqref{l2.4}. From Proposition \ref{prop_uniq} we obtain that $(C, R, K)$ is unique solution to the system \eqref{l2.1}-\eqref{l2.3} given the boundary and symmetry conditions \eqref{l2.4}. Thus, we can conclude that $(\E C_N, \E \chi_N)$  converges uniformly in $[0,T]^2$ to $(C,\chi)$, where $\chi(s,t) = \int_0^t R(s,u)du$. By the self-averaging property of $((C_N, \chi_N))_N$ proven in Proposition \ref{p3}, it follows that $((C_N, \chi_N))_N$ converges almost surely to $(C, \chi)$ uniformly on $[0,T]^2$. The uniform convergence of $((C_N,\chi_N))_N$ in $L^q$ for $q\geq 1$ now follows from the uniform moment bounds on $C_N, \chi_N$ established in Proposition \ref{p2}.
\end{proof}

We finish the section by deriving the explicit representation of $H(s,t)$ from the recursive equation \eqref{eqn:int_H}. First, we need to introduce some notation. Let $\B$ denote the set of  finite rooted binary trees. We consider a family of weight matrices $\omega:\B\to \cC(\Pi_T)^{2\times 2}$ defined recursively as follows. For the trivial tree $\emptyset \in\B$, consisting of only the root vertex $\emptyset$, we set $\omega_\emptyset(x,y) = I_2$. For a tree $\T \in \B$  such that $\T_1,\T_2\in \B$ are the left and right children subtrees respectively, we define
\begin{align}\label{wt_const}
    \omega_{\T}(x,y)
    = \int_y^x \int_u^x
    \A[\omega_{\T_1}(v,u)]
    \omega_{\T_2}(u,y)dvdu,
\end{align}
where $\A$ is the operator defined in \eqref{defn_a}. 

Equipped with this notation, we have \begin{lemma}[Binary tree representation of $H$]\label{lem:H}
   The matrix \begin{align}\label{expn:H}
    H(s,t)
    = \sum_{\T\in\B}\omega_{\T}(s,t)
\end{align}
is the unique solution to \eqref{eqn:int_H}. Further, we have that  \begin{align}\label{diff_h}
    \|\Delta H(s,t)\|
    & \leq \eta\int_t^s\int_u^s \|\Delta C(v,u)\|dvdu.
\end{align}
where $\eta$ is a finite constant that depends only on $\alpha, T, \xi$ and the bounds on $C, \wt C$.
\end{lemma}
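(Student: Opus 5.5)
The plan is to treat \eqref{eqn:int_H} as a linear Volterra-type fixed point problem in $H$ and to control the tree series \eqref{expn:H} by counting trees. Let $L$ be a uniform bound on $\|\xi''(C(s,t))\|$ over $\Pi_T$; it is finite since $C$ is bounded and $\xi''$ is a polynomial. Because $e^{\pm\alpha s\Theta}$ are rotations, \eqref{defn_a} gives $\|\A[M]\|\leq c_0 L\|M\|$ for some absolute constant $c_0$ (Hadamard product with a bounded matrix, conjugated by orthogonal matrices). Write $|\T|$ for the number of internal vertices of $\T$. I will show by induction on the tree that
\[
\|\omega_\T(x,y)\| \leq (c_0L)^{|\T|}\,\frac{(x-y)^{2|\T|}}{(2|\T|)!}\, c_\T ,
\]
where $c_\T$ is a combinatorial factor. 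The double integral over $\{y\le u\le v\le x\}$ raises the power of $(x-y)$ by two at each internal vertex. Convolving $(v-u)^{2a}/(2a)!$ with $(u-y)^{2b}/(2b)!$ over the simplex gives a beta-integral identity, and $c_\T$ stays $\leq1$. Since the number of binary trees with $n$ internal vertices is the Catalan number $\mathrm{Cat}_n\leq 4^n$, the sum $\sum_\T \|\omega_\T\|\leq \sum_n 4^n(c_0L)^n T^{2n}/(2n)!$ converges uniformly on $\Pi_T$. Hence \eqref{expn:H} defines a continuous function.

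To see that it solves \eqref{eqn:int_H}, substitute the series into the right-hand side. Every tree other than $\emptyset$ decomposes uniquely as a root with children $(\T_1,\T_2)$, so by linearity of $\A$ and absolute convergence (Fubini) the right-hand side equals $I_2+\sum_{\T_1,\T_2}\omega_{(\T_1,\T_2)}=\sum_\T\omega_\T$.

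\textbf{Uniqueness.} Note that \eqref{eqn:int_H} is quadratic in $H$, through $\A[H]H$. Let $H,H'$ be two bounded continuous solutions, both bounded by $M$. Their difference satisfies
\[
\|\Delta(s,t)\|\leq c\int_t^s\int_u^s\big(\|\Delta(v,u)\|+\|\Delta(u,t)\|\big)\,dv\,du .
\]
Set $D(r)=\sup_{s-t\le r}\|\Delta(s,t)\|$. Then $D(r)\leq c' r\int_0^r D$, and Grönwall gives $D\equiv0$. Since $R$ is bounded by Lemma \ref{lem}, $H$ is bounded a priori, so this applies.

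\textbf{Lipschitz bound \eqref{diff_h}.} Compare the trees built from $C$ and from $\wt C$. Write $\Delta\A[M]=\A_C[M]-\A_{\wt C}[M]$. The local Lipschitz property of $\xi''$ on the common bounded range gives $\|\Delta\A[M](v,u)\|\leq c\|\Delta C(v,u)\|\|M\|$. For a tree, telescoping over its internal vertices gives
\[
\Delta\omega_\T = \int\!\!\int \Big(\Delta\A[\omega_{\T_1}]\,\omega_{\T_2} + \wt\A[\Delta\omega_{\T_1}]\,\omega_{\T_2}+\wt\A[\wt\omega_{\T_1}]\,\Delta\omega_{\T_2}\Big).
\]
By induction, $\|\Delta\omega_\T(x,y)\|\leq |\T|\,K^{|\T|}\frac{T^{2|\T|-2}}{(2|\T|-2)!}\int_y^x\int_u^x\|\Delta C(v,u)\|\,dv\,du$. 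The factor $|\T|$ counts the vertex where the difference falls. The key point is that the outer double integral $\int_y^x\int_u^x$ is monotone in the domain, so for subtrees living on $[u,x]$ or $[y,u]$ their $\Delta C$-integrals are dominated by the full one over $\{y\le u\le v\le x\}$. Summing over trees with the Catalan bound yields \eqref{diff_h}, with $\eta$ depending only on $\alpha,T,\xi$ and the bounds on $C,\wt C$.

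The main obstacle is this last bookkeeping: making the induction close with the same double-integral functional $\int_y^x\int_u^x\|\Delta C\|$ at every level, while retaining enough factorial decay to beat the Catalan growth. I would first try crude bounds, $\|\omega_{\T_i}\|\le K^{|\T_i|}T^{2|\T_i|}/(2|\T_i|)!$, combined with monotonicity of the integration domain, and accept a weaker factorial exponent, which still suffices on the finite interval $[0,T]$.
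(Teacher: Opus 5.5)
Your construction of the series, the Catalan count, the verification of \eqref{eqn:int_H}, and the uniqueness argument are sound and follow the paper. In the uniqueness step you correctly handle the quadratic term $\A[H]H$ through $D(r)=\sup_{s-t\le r}\|\Delta(s,t)\|$, which the paper leaves to the reader. The gap is in the Lipschitz estimate \eqref{diff_h}.

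Write $d_n$ for the coefficient of $\int_y^x\int_u^x\|\Delta C\|$ in the bound for a tree with $n$ internal vertices. The induction hypothesis you state uses $T^{2|\T|-2}/(2|\T|-2)!$ instead of $(x-y)^{2|\T|-2}/(2|\T|-2)!$, and it does not propagate. In the term $\int_y^x\int_u^x\wt\A[\Delta\omega_{\T_1}(v,u)]\,\omega_{\T_2}(u,y)\,dv\,du$, the outer integral then contributes only $(x-y)^2/2\le T^2/2$. Already along the left comb ($\T_2=\emptyset$ at every level), the step gives only $d_{n+1}\le \tfrac{\eta T^2}{2}d_n+\dots$, whereas your claimed bound needs $d_{n+1}/d_n=O(T^2/n^2)$.

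Your fallback (crude sup bounds, accepting a weaker factorial) fails for the same reason. Once the inductive quantity is bounded by constants in $T$, each level costs a factor of order $\eta T^2$ with no compensating factorial. The resulting bound on the tree sum then diverges unless $\eta T^2$ is small, so it does not work on an arbitrary $[0,T]$.

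The missing step is to keep everything as a function of the endpoints. Set $I(x,y)=\int_y^x\int_u^x\zeta(v,u)\,dv\,du$ and take as hypothesis
$\|\Delta\omega_\T(x,y)\|\le|\T|\eta^{|\T|-1}\frac{(x-y)^{2|\T|-2}}{(2|\T|-2)!}I(x,y)$.
Apply it, together with \eqref{eqn:wt_estimate}, at $(v,u)$ and $(u,y)$ inside the integral. Use your monotonicity observation only to replace $I(v,u)$ and $I(u,y)$ by $I(x,y)$. The remaining polynomial weights then integrate exactly via $\int_y^x\int_u^x\frac{(v-u)^m(u-y)^n}{m!\,n!}\,dv\,du=\frac{(x-y)^{m+n+2}}{(m+n+2)!}$.

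For the root term, $\zeta(v,u)$ must stay inside the integral. There, use $\frac{p^mq^n}{m!\,n!}\le\frac{(p+q)^{m+n}}{(m+n)!}$ with $p=v-u$, $q=u-y$ and $p+q\le x-y$. The three contributions add up to $(1+|\T_1|+|\T_2|)\eta^{|\T|-1}\frac{(x-y)^{2|\T|-2}}{(2|\T|-2)!}I(x,y)$, which is exactly \eqref{diff_w}. Summing against the Catalan count then gives \eqref{diff_h}.

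Completed this way, your argument is shorter than the paper's. The paper does not use monotonicity. It instead proves via Fubini that each vertex contribution $\xi_{\T,z}$ equals $\frac{1}{n_1!n_2!n_3!}\int_y^x\int_u^x(x-v)^{n_1}(v-u)^{n_2}(u-y)^{n_3}\zeta(v,u)\,dv\,du$, and only then applies the same multinomial inequality.
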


\begin{proof}
    As earlier, $\eta,\eta'$ shall denote finite positive constants that may depend on $T$, the model parameters, and the bounds on $C, \wt C$ and their values may change from line to line.
    
    First, we show that the series \eqref{expn:H} is absolutely convergent. Clearly, from the uniform boundedness of $C$ and $K$, it follows that $\|\A[M(x,y)]\| \leq \eta\|M(x,y)\|$ for any $M \in \cC(\Pi_T)^{2\times 2}$. Thus, from \eqref{eqn:wt_estimate} we have \begin{align}\label{eqn:bdd_h}
    \|H(s,t) \| & \leq \sum_{n\geq 0}\sum_{\T \in \B:|\T| = n}\|\omega_\T (s,t)\| \leq \sum_{n\geq 0} |\{\T \in \B : |\T| = n\}|\frac{\eta^n (s-t)^{2n}}{(2n)! }  \leq \sum_{n\geq 0} \frac{1}{n+1}\frac{\eta^n T^{2n}}{(n!)^2} < e^{2T \sqrt{\eta}} < \eta',
\end{align}
where we have used that $s,t \in [0,T]$, and the fact that $|\{\T \in \B : |\T| = n\}| = \binom{2n}{n}/(n+1)$ (see \cite[Example 5.3.12]{Sta}). We now verify that the expression of $H(s,t)$ in \eqref{expn:H} satisfies \eqref{eqn:int_H}. Indeed, by the linearity of $\A$ in $H$ we have \begin{align*}
    & I_2 + \int_t^s \int_u^s \A[H(v,u)]H(u,t) dvdu= \omega_\emptyset (s,t) + \sum_{\T_1, \T_2 \in \B}\int_t^s \int_u^s \A[\omega_{\T_1}(v,u)] \omega_{\T_2}(u,t)dvdu \\
    & =\omega_\emptyset (s,t) + \sum_{\T \in \B\setminus \{\emptyset\}} \omega_{\T}(s,t)  = H(s,t).
\end{align*}
The uniqueness follows from a standard Gr\"onwall argument. Indeed,  if $H_1$ and $H_2$ are two solutions to \eqref{eqn:int_H} corresponding to the same operator $\A$, then the difference  $\|H_1 - H_2\|$ satisfies a inequality of the form \eqref{in:grn}, and Gr\"onwall's lemma yields $\|H_1-H_2\| = 0$, implying that $H_1 \equiv H_2$; we skip the details.

We now establish \eqref{diff_h}. Using \eqref{defn_a} we write
 \begin{align*}
     \A[M(v,u)] - \wt \A[M(v,u)] & = e^{-\alpha v\Theta }\Big[\Big(\xi''(C(v,u))-\xi''(\wt C(v,u))\Big)\odot\Big(e^{\alpha v\Theta} M(v,u)e^{-\alpha u\Theta}\Big)\Big]e^{\alpha u\Theta}
 \end{align*}
 and use the facts that $f'$ is locally Lipschitz, $\xi''$ is a polynomial, and $C, \wt C$ are uniformly bounded to obtain that for any $M \in \cC(\Pi_T)^{2\times 2}$, \begin{align*}
     \|\A[M(v,u)] - \wt \A[M(v,u)]\| \leq \eta\|M(v,u)\|\|\Delta C(v,u)\|.
 \end{align*}
Setting $\zeta(v,u) = \eta\|\Delta C(v,u)\|$ in Lemma \ref{lem_app}, \eqref{diff_w} yields that \begin{align*}
    \|\Delta \omega_\T (x,y)\| \leq \eta'|\T|\eta^{|\T|-1}\frac{(x-y)^{2(|\T|-1)}}{(2(|\T|-1))!}\int_y^x\int_u^x \|\Delta C(v,u)\|dvdu.
\end{align*}
Equipped with this estimate we use \eqref{expn:H} to obtain \begin{align*}
    \|\Delta H(s,t)\| & \leq \eta'\sum_{n\geq 1}\frac{n\eta^{n-1}(s-t)^{2(n-1)}}{(2(n-1))!}|\{\T \in \B: |\T| = n\}|\int_t^s \int_u^s \|\Delta C(v,u)\|dvdu \\
    & \leq 4(\eta')^2e^{2\sqrt{\eta}(s-t)}\int_t^s\int_u^s \|\Delta C(v,u)\|dvdu.
\end{align*} 
\end{proof}

\section{Limiting dynamics for the spherical model}\label{sec5}

In this section, we restrict our attention to even mixtures, i.e., $\xi(r) = \sum_{p=1}^n a_{2p}^2r^{2p}/(2p)!$ for some fixed integer $n \geq 1$. Under this assumption, we establish the limiting dynamics for the spherical model mentioned in Theorem \ref{thm2}. 

We consider a sequence of soft-spherical models indexed by a stiffness parameter $\gamma > 0$, where the Langevin dynamics is governed by the constraint potential $f_\gamma(r) = \frac{\gamma}{2}(r-1)^2 + f(r)$, where $f$ is a polynomial that satisfies \eqref{cond} and $f'(1) = 1/2$. For each $\gamma$, we denote the corresponding unique macroscopic solution to \eqref{l2.1}--\eqref{l2.3} satisfying \eqref{l2.4} and $K(0) = I_2$ by $(C_\gamma,R_\gamma,K_\gamma)$. Our first observation is that the macroscopic equations preserve a specific commutative algebra. Let $\V \coloneqq \spn\{I_2, \Theta\}$. Since $\Theta^2 = -I_2$, $\V$ is isomorphic to the complex field $\mathbb{C}$ and is closed under addition, multiplication, and transposition. 

\begin{lemma}[Algebraic Closure] \label{lem:algebra}
    For any $\gamma > 0$, the matrices $C_\gamma(s,t), R_\gamma(s,t) \in \V$ and $K_\gamma(s) = q_\gamma(s) I_2$ for all $s,t \in [0,T]$, where $q_\gamma \in \cC([0,T])$ is non-negative.
\end{lemma}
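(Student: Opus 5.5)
The plan is to use the uniqueness statement, Proposition \ref{prop_uniq}, together with a symmetry of the system \eqref{l2.1}--\eqref{l2.3}. Let $\cR$ denote the linear map on $2\times 2$ matrices given by conjugation by $\Theta$, $M\mapsto \Theta^T M\Theta$. The commutant of $\Theta$ is exactly $\V$. Concretely, $\cR$ fixes $\V$ pointwise and acts as $-1$ on the complementary span of $\{\mathrm{diag}(1,-1), \Theta'\}$, where $\Theta'=\begin{pmatrix}0&1\\1&0\end{pmatrix}$. So $\V$ is the fixed-point set of $\cR$.

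I would show that if $(C,R,K)$ solves \eqref{l2.1}--\eqref{l2.3} with \eqref{l2.4} and $K(0)=I_2$, then so does $(\cR C,\cR R,\cR K)$. Uniqueness then forces $\cR C=C$, $\cR R=R$ and $\cR K=K$, so all three take values in $\V$.

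The key verification is equivariance of each term under $\cR$.
\begin{itemize}
\item $\cR$ is an algebra automorphism, so it commutes with products and transposes: $\Theta^T M^T\Theta=(\Theta^T M\Theta)^T$.
\item It fixes $I_2$, $\Theta$ and $\tr$, and $\cR K(0)=K(0)$.
\item Entrywise maps commute with it: $\Theta^T M\Theta$ swaps the diagonal entries and swaps the off-diagonal entries. Hence it is a permutation of entries, namely conjugation by the permutation $\Theta'$ up to sign. Precisely, $\Theta^T M\Theta = P M P$ with $P=\Theta'$, since $\Theta=P\,\mathrm{diag}(-1,1)$ and the signs cancel as $\mathrm{diag}(\pm)$ commutes appropriately. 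I would check this carefully. For $M=(m_{ij})$ one computes $\Theta^TM\Theta=\begin{pmatrix}m_{22}&-m_{21}\\-m_{12}&m_{11}\end{pmatrix}$, which is not a pure permutation.
\end{itemize}

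This is the main obstacle: $\xi'$, $\xi''$ and $\odot$ are entrywise, and conjugation by $\Theta$ introduces signs on the off-diagonal entries. I would handle it with evenness. Because $\xi$ is an even mixture, $\xi'$ is odd and $\xi''$ is even. Then
\[
\xi'(\Theta^TM\Theta)=\Theta^T\xi'(M)\Theta,
\]
since $\xi'(-m)=-\xi'(m)$ on the sign-flipped entries. Similarly,
\[
\xi''(\Theta^T C\Theta)\odot(\Theta^TR\Theta)=\Theta^T\big(\xi''(C)\odot R\big)\Theta,
\]
because $\xi''$ of a negated entry is unchanged while the $R$ entry carries the sign. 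Alternatively one can use the permutation $P$ directly, with $PMP$ equal to the entry swap, and show $\V$ is the fixed set of $M\mapsto PM^TP$. With these identities each term of \eqref{l2.1}--\eqref{l2.3} maps to the corresponding term. The term $\alpha\Theta C$ maps to $\alpha\Theta\cR C$ since $\Theta$ commutes with itself. The commutator $\Theta K-K\Theta$ is equivariant, and the $f'$ coefficients are unchanged because traces are preserved. The conditions \eqref{l2.4} are preserved too, since $\cR I_2=I_2$ and $\cR$ commutes with transposition.

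Finally, $K_\gamma(s)=C_\gamma(s,s)$ is symmetric by \eqref{l2.4} and lies in $\V$. The symmetric elements of $\V$ are the multiples of $I_2$, so $K_\gamma=q_\gamma I_2$. Moreover $K_\gamma(s)$ is a limit of the Gram matrices $K_N(s)$ by Theorem \ref{thm1} (the hypotheses hold for $f_\gamma$), hence positive semidefinite, so $q_\gamma\ge 0$. Continuity of $q_\gamma$ follows from continuity of $K_\gamma$.
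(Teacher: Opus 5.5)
Your argument is correct, and it differs from the paper's in its logical frame rather than in its core computation.

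\textbf{The paper's route.} It shows that $\V$ is mapped into itself by every term of \eqref{l2.1}--\eqref{l2.3}. This uses that $\xi''$ is even, so $\xi''(C)$ is symmetric for $C\in\V$, and that $\xi'$ is odd. It then asserts that Picard iterations stay in $\V$.

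\textbf{Your route.} You show that the whole system is equivariant under $M\mapsto\Theta^TM\Theta$, whose fixed-point set is exactly the commutant $\V$. Proposition \ref{prop_uniq} then forces the unique solution into that fixed set. The parity facts do the same work in both versions, and your identities $\xi'(\Theta^TM\Theta)=\Theta^T\xi'(M)\Theta$ and $\xi''(\Theta^TC\Theta)\odot(\Theta^TR\Theta)=\Theta^T(\xi''(C)\odot R)\Theta$ check out entrywise.

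\textbf{What your route buys.} It needs only the existence and uniqueness already proved. The paper's Picard step implicitly relies on a convergent Picard scheme for the full nonlinear system, including the $f'(\tr K/2)$ coupling. That scheme is never set up, since the paper obtains existence from the $N\to\infty$ limit. You also supply the non-negativity of $q_\gamma$ via the Gram-matrix limit, which the paper's proof does not address. To make that step airtight, fix an admissible initial law with $K(0)=I_2$ (e.g.\ i.i.d.\ standard Gaussian $x(0)$) and use uniqueness to identify $K_\gamma$ with the resulting limit.

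\textbf{Cleanup.} Delete the aside about $M\mapsto PM^TP$ with $P=\Theta'$. Its fixed set is $\{m_{11}=m_{22}\}$, strictly larger than $\V$, and it reverses products, so it is not a symmetry of the equations. Likewise remove the retracted claim $\Theta^TM\Theta=PMP$, so that only the signed-conjugation argument remains.
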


\begin{proof}
    For any matrix $C\in \V$, we can write \begin{align*}
        C = c_d I_2 + c_a \Theta = \begin{pmatrix} c_d & c_a\\ -c_a & c_d \end{pmatrix}.
    \end{align*}
    Since $\xi$ contains only even powers $\xi''$ is an even function, and thus $\xi''$ acting entrywise on the matrix $C$ leads to $\xi''(C_{21}) = \xi''(-c_a) = \xi''(c_a) = \xi''(C_{12})$. Therefore, $\xi''(C)$ is a symmetric matrix. Consequently, taking the Hadamard product with any $R = r_d I_2 + r_a \Theta \in \V$ yields:
    \begin{align*}
        \xi''(C) \odot R = \begin{pmatrix} \xi''(c_d)r_d & \xi''(c_a) r_a\\ -\xi''(c_a) r_a & \xi''(c_d)r_d \end{pmatrix} = \xi''(c_d)r_d  I_2 + \xi''(c_a) r_a \Theta \in \V.
    \end{align*}
   Note that the initial condition $K_\gamma(0) = I_2 \in \V$ and the drift terms $\alpha \Theta$ and $I_2$ are in $\V$. Further, by the above computation, we have that the non-linear term $(\xi''(C(s,u))\odot R(s,u))C(u,s)$ remains in $\V$. Since $\xi'$ is an odd function, $\xi'(C(s,u)) \in \V$ for $C \in \V$, and hence $\xi'(C(s,u)) R(s,u)^T \in \V$ for $R(s,u)\in \V$. Therefore, Picard iterations applied to \eqref{l2.1}--\eqref{l2.3} preserve the algebra $\V$ at every step. Because $C_\gamma(s,s)$ is symmetric and belongs to $\V$, it must be a multiple of the identity, so $K_\gamma(s) = q_\gamma(s)I_2$.
\end{proof}

To pass to the spherical limit $\gamma \to \infty$, we must bound the solutions uniformly in $\gamma$. In the traditional single-species models considered in \cite{Dem}, this requires pointwise positivity of the function $R(s,t)$, which fails here due to the rotations generated by $\alpha \Theta$. Instead, we bypass positivity by exploiting the physical energy bounds inherited from the pre-limiting system.

\begin{lemma}[Uniform $\gamma$-Bounds] \label{lem:gamma_bounds}
    There exist constants $\eta_C, \eta_R, \eta_\mu, \eta_f < \infty$ and $\gamma_0 = \gamma_0(\eta_C, \eta_f) < \infty$, such that for all $s, t \in [0,T]$ and $\gamma \geq \gamma_0$:
    \begin{align}
        \|C_\gamma(s,t)\| \leq \eta_C, \qquad \|R_\gamma(s,t)\| \leq \eta_R, \qquad \text{and} \qquad |\mu_\gamma(s)| \leq \eta_\mu,
    \end{align}
    where $\mu_\gamma(s) = f'_\gamma(q_\gamma(s))$ is the Lagrange multiplier implementing a soft spherical constraint.
\end{lemma}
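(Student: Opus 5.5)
Throughout, write $q_\gamma(s)=\tr(K_\gamma(s))/2$ as in Lemma \ref{lem:algebra}, so that $K_\gamma(s)=q_\gamma(s)I_2$ and $\mu_\gamma(s)=f_\gamma'(q_\gamma(s))=\gamma(q_\gamma(s)-1)+f'(q_\gamma(s))$. The plan is to obtain the three bounds in the order $C_\gamma$, then $R_\gamma$ (given a bound on $\mu_\gamma$ from below), then $\mu_\gamma$, closing the argument with a bootstrap on the maximal time interval on which the bounds hold.

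\textbf{Bound on $C_\gamma$.} The energy inherited from the pre-limiting system is the identity $C_\gamma(s,t)=\lim_N \E C_N(s,t)$ from Theorem \ref{thm1}. Cauchy--Schwarz at finite $N$ (as in \eqref{p2.61}) then gives $\|C_\gamma(s,t)\|\le \eta\,(q_\gamma(s)+q_\gamma(t))$, so it suffices to bound $q_\gamma$ uniformly in $\gamma$. For this I would use the scalar trace of \eqref{l2.2}. The commutator term $\alpha[\Theta K-K\Theta]$ vanishes because $K_\gamma\in\V$ is scalar, so
\[
q_\gamma'(s)=1-2\mu_\gamma(s)q_\gamma(s)+\tfrac12\tr(\text{memory terms}).
\]
Since $f_\gamma'(r)\ge \gamma(r-1)+f'(r)$ with $f'$ polynomial and coercive by \eqref{cond}, the restoring term dominates whenever $q_\gamma>1$. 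The memory terms are bounded by polynomials in $\sup\|C_\gamma\|$ and $\sup\|R_\gamma\|$. Provided those are controlled on $[0,s]$, a comparison argument then shows $q_\gamma(s)\le 1+\eta/\gamma\le 2$ once $\gamma\ge\gamma_0$. Alternatively, I could take the uniform-in-$N$ bound from Proposition \ref{PropUniq}, \eqref{p1.1}, which depends on $f$ only through \eqref{cond}, and note that adding $\frac{\gamma}{2}(r-1)^2$ only improves the drift for $r\ge1$. This yields $\sup_s q_\gamma(s)\le\eta_C$ independently of $\gamma$ directly, and I would try this route first.

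\textbf{Bound on $R_\gamma$.} Here positivity of $R$ is unavailable, so I would pass to the conjugated kernel $H$ of \eqref{defn_H}: $R_\gamma(s,t)=e^{X(s)}H(s,t)e^{-X(t)}$ with $e^{X(s)-X(t)}=e^{-\int_t^s\mu_\gamma}e^{\alpha(s-t)\Theta}$, where the rotation factor has norm one. The tree expansion of Lemma \ref{lem:H} bounds $\|H\|$ by a constant depending only on $T$ and $\sup\|C_\gamma\|$, not on $\gamma$. It therefore remains to bound $\int_t^s\mu_\gamma$ from below. From the trace equation above,
\[
\mu_\gamma=\frac{1+\tfrac12\tr(\cdots)-q_\gamma'}{2q_\gamma}.
\]
So $\int_t^s\mu_\gamma$ is at least $-\tfrac12\log\bigl(q_\gamma(s)/q_\gamma(t)\bigr)$ minus bounded terms, provided $q_\gamma$ stays bounded away from $0$. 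The lower bound on $q_\gamma$ follows symmetrically: $f_\gamma'$ is strongly negative for $q<1$, which pushes $q_\gamma$ up. This gives $\|R_\gamma\|\le\eta_R$.

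\textbf{Bound on $\mu_\gamma$ and the main obstacle.} The pointwise bound $|\mu_\gamma|\le\eta_\mu$ is the main difficulty, since $\mu_\gamma=\gamma(q_\gamma-1)+O(1)$ requires $|q_\gamma-1|=O(1/\gamma)$. I would write $y=q_\gamma-1$, so that $y'=-2\gamma y(1+y)+g_\gamma(s)$ with $|g_\gamma|\le\eta_f$ (here I use $f'(1)=1/2$ and the bounds on $C_\gamma$ and $R_\gamma$) and $y(0)=0$. A stiff ODE comparison then gives $|y|\le\eta_f/\gamma$ for $\gamma\ge\gamma_0(\eta_C,\eta_f)$, hence $|\mu_\gamma|\le\eta_f+\sup|f'|$ on $[1/2,2]$.

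The circularity — $g_\gamma$ depends on $R_\gamma$, whose bound depended on $\mu_\gamma$ — I would close by a continuity argument on the maximal interval where all three bounds hold with doubled constants. The stiff comparison improves them strictly, so that interval must be all of $[0,T]$.
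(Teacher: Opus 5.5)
Most of your ingredients match the paper. These include the $\gamma$-uniform Lyapunov bound of Proposition \ref{PropUniq} for $q_\gamma$ and hence $C_\gamma$, the conjugated kernel $H$ whose tree bound depends only on $\sup\|C_\gamma\|$ and $T$, the identity $\mu_\gamma=(1-q_\gamma'+2r_\gamma)/(2q_\gamma)$ used to bound $\int_t^s\mu_\gamma$ from below, and comparison arguments for $q_\gamma$. The gap is in how you control the memory term $r_\gamma(s)=\frac12\tr\int_0^s[\xi'(C_\gamma(s,u))R_\gamma(s,u)^T+(\xi''(C_\gamma(s,u))\odot R_\gamma(s,u))C_\gamma(u,s)]du$. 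You bound it through $\sup\|R_\gamma\|$ and propose to close the resulting loop by a continuity argument, but that loop does not close. Since $r_\gamma$ is linear in $R_\gamma$, on an interval where $\|R_\gamma\|\le 2M_R$ you only get $|r_\gamma|\le cTM_R$. Hence $\int_t^s\mu_\gamma\ge-\eta-cT^2M_R$, and so only $\|R_\gamma\|\le\eta_He^{\eta+cT^2M_R}$, which has no admissible $M_R$ unless $T$ is small. Equivalently, $\Phi(s)=\int_0^s\int_0^u\|R_\gamma(u,v)\|dvdu$ satisfies only $\Phi'\le ae^{b\Phi}$, and this permits blow-up in finite time. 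So no constant is ``improved strictly.'' The stiff comparison sharpens $|q_\gamma-1|$ given a bound on $g_\gamma$, but it does nothing to the loop $R_\gamma\to r_\gamma\to\mu_\gamma\to R_\gamma$. Without a sign on $R_\gamma$, which the rotation $e^{\alpha\Theta}$ destroys, there is no macroscopic mechanism to break it.

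The missing idea is to bound $r_\gamma$ from the pre-limiting system, bypassing $R_\gamma$ altogether. By \eqref{l2.8} at $t=s$, $r_\gamma(s)=\frac12\tr A_\gamma(s,s)=\lim_N\frac1{2N}\E[G(x_\gamma(s))\cdot x_\gamma(s)]$. Euler's theorem for the homogeneous pieces gives $\nabla H_N(x^a)\cdot x^a=\sum_p 2pH_{N,2p}(x^a)$, with $|H_{N,2p}(x^a)|\le\eta N\|J\|_\infty^N(2S_{N,\gamma})^p$. The moment bounds on $\|J\|_\infty^N$, together with the same $\gamma$-uniform bounds on $S_{N,\gamma}$ you already use for $C_\gamma$, give $|r_\gamma|\le\eta_r$ with $\eta_r$ independent of both $\gamma$ and $R_\gamma$. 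With this no bootstrap is needed. The bound $q_\gamma\ge 1/2$ follows by comparison at $q_\gamma=1/2$, and your $R_\gamma$ argument then goes through verbatim. Finally, $|\mu_\gamma|$ is bounded either by your stiff comparison (now with $g_\gamma$ bounded independently of $R_\gamma$) or, as in the paper, from $\mu_\gamma'=(\gamma+f''(q_\gamma))(1+2r_\gamma-2q_\gamma\mu_\gamma)$ and $\mu_\gamma(0)=1/2$.
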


\begin{proof}
     Let $x_\gamma(t)$ denote the unique strong solution to \eqref{lan} under potential $f_\gamma$, and define $S_{N,\gamma}(t)=\|x_\gamma(t)\|^2/(2N)$. Applying It\^o's formula to $S_{N,\gamma}$ we observe that the contribution of the confining potential is equal to \begin{align*}
         -2f_\gamma'(S_{N,\gamma})S_{N,\gamma} = -2f'(S_{N,\gamma})S_{N,\gamma} -2\gamma(S_{N,\gamma}-1)S_{N,\gamma}.
     \end{align*}
     Hence, on the region \(\{S_{N,\gamma}\ge 1\}\), we have $-f_\gamma'(S_{N,\gamma})S_{N,\gamma} \leq -f'(S_{N,\gamma})S_{N,\gamma}$ uniformly in $\gamma \geq 0 $. Consequently, the upper bounds on $\sup_{t\leq T} S_{N,\gamma}(t)$ derived via the Lyapunov argument in Proposition \ref{PropUniq} depend only on the initial data and $f$, and thus hold uniformly in $\gamma \geq 0$. Taking the macroscopic limit $N \to \infty$, we obtain that for some constant $\eta_q< \infty$, $\sup_{t\leq T} q_\gamma(t) \leq \eta_q$ uniformly over $\gamma \geq 0$. Since $C_\gamma$ is the macroscopic limit of $\E C_{N,\gamma}$, the uniform bound on $q_\gamma$ together with \eqref{p2.61} yields that $\sup_{s,t\leq T}\|C_\gamma(s,t)\| \leq \eta_q \eqqcolon \eta_C$.
    
    Next, we establish a macroscopic bound on the energy memory term defined as
    \begin{align*}
        r_\gamma(s) \coloneqq \frac{1}{2} \tr \int_0^s \Big[ \xi'(C_\gamma(s,u))R_\gamma(s,u)^T + \Big(\xi''(C_\gamma(s,u))\odot R_\gamma(s,u)\Big)C_\gamma(u,s) \Big] du.
    \end{align*}
    By \eqref{l2.8} and the definition of $A_{N,\gamma}(s,t)$, we have that $r_\gamma(s)$ is the limit of $\frac{1}{2N}\E[G(x_\gamma(s)) \cdot x_\gamma(s)]$. By Euler's homogeneous function theorem for polynomials, 
    \begin{align*}
        G^a(x_\gamma(s)) \cdot x_\gamma^a(s) = \nabla H_N(x_\gamma^a(s)) \cdot x_\gamma^a(s) = \sum_{p=1}^n 2p H_{N,2p}(x_\gamma^a(s)), 
    \end{align*}
    and therefore, we can bound the pure-$2p$ part of the Hamiltonian as
    \begin{align*}
         |H_{N,2p}(x_\gamma^a)| \leq \frac{|a_{2p}|}{(2p)!} \|J\|_\infty^N N^{1-p} \|x_\gamma^a\|^{2p} \leq N \frac{|a_{2p}|}{(2p)!} \|J\|_\infty^N (2 S_{N,\gamma}(s))^p.
    \end{align*}
    Summing over $a \in \{1,2\}$ and applying the uniform-in-$N$ bounds on $\E[\|J\|_\infty^N]$ and $\E [S_{N,\gamma}(s)^p]$, the almost sure convergence of $S_{N,\gamma}(s) \to q_\gamma(s)$ uniformly on $[0,T]$, and $\sup_{t\leq T,\gamma \geq 0} q_\gamma(t) \leq \eta_q$, the limit $N \to \infty$ yields the bound $|r_\gamma(s)| \leq \eta_r$, where $\eta_r$ is strictly independent of $\gamma$ and $R_\gamma$.

    We now establish a uniform lower bound on $q_\gamma(s)$. Taking the trace of \eqref{l2.2} gives:
    \begin{align}\label{eqn:diff_q}
        q_\gamma'(s) = 1 - 2\mu_\gamma(s)q_\gamma(s) + 2r_\gamma(s) = 1 - 2\gamma(q_\gamma(s)-1)q_\gamma(s) - 2f'(q_\gamma(s))q_\gamma(s) + 2r_\gamma(s).
    \end{align}
    Since $0\leq q_\gamma(s) \leq \eta_q$, we have $|2f'(q_\gamma(s))q_\gamma(s)| \leq \eta_{f,q}$, for some constant $\eta_{f,q}$ depending only on $f$ and $\eta_q$. Using our bound on $r_\gamma$, we get:
    \begin{align*}
        q_\gamma'(s) \geq 1 + 2\gamma(1-q_\gamma(s))q_\gamma(s) - \eta_{f,q} - 2\eta_r.
    \end{align*}
    If $q_\gamma(s)$ reaches $1/2$, we have $q_\gamma'(s) \geq 1 + \gamma/2 - \eta_{f,q} - 2\eta_r$. For $\gamma \geq \gamma_0\geq 2(\eta_{f,q} + 2\eta_r)$, we have $q_\gamma'(s) > 0$. Since $q_\gamma(0) = 1$, $q_\gamma(s)$ can never drop below $1/2$. Hence, $q_\gamma(s) \ge 1/2$ uniformly in $\gamma \geq \gamma_0$.

    We bound $R_\gamma$ now. Recall from \eqref{defn_H} that \begin{align}\label{eqn:exp_r}
        R_\gamma(s,t) = \exp\Big(-\int_t^s \mu_\gamma(u)du+\alpha(s-t)\Theta\Big) H_\gamma(s,t).
    \end{align}
    Because $e^{\alpha \Theta}$ is a rotation matrix, its operator 2-norm on $\V$ is exactly 1. As noted in \eqref{defn_a}, the scalar terms $e^{\pm \int_0^\cdot \mu_\gamma(u)du}$ perfectly commute and cancel out inside the operator $\mathcal{A}_\gamma$. Thus, $\|\mathcal{A}_\gamma[H_\gamma]\| \leq \|\xi''(C_\gamma)\| \|H_\gamma\|$. Since $\|C_\gamma\| \leq \eta_q$, the binary tree expansion in Lemma \ref{lem:H} implies $\|H_\gamma(s,t)\| \leq \eta_H$ independent of $\gamma$.
    
    To bound the exponential factor, we rearrange the $q_\gamma'$ equation \eqref{eqn:diff_q} to get
    \begin{align}\label{f_q}
        \mu_\gamma(s) = \frac{1 - q_\gamma'(s) + 2r_\gamma(s)}{2q_\gamma(s)}.
    \end{align}
    Integrating this from $t$ to $s$ yields:
    \begin{align*}
        \int_t^s \mu_\gamma(u)du = -\frac{1}{2}\log\Big(\frac{q_\gamma(s)}{q_\gamma(t)}\Big) + \int_t^s \frac{1 + 2r_\gamma(u)}{2q_\gamma(u)}du.
    \end{align*}
    Since $q_\gamma(u) \geq 1/2$ and $r_\gamma(u) \geq -\eta_r$, choosing $\eta_r \geq 1/2$ if necessary, we have that the integrand $(1+2r_\gamma)/(2q_\gamma)$ is bounded below by $1-2\eta_r$. Thus, \begin{align*}
        \int_t^s \mu_\gamma(u)du \geq -\frac{1}{2} \log(2 \eta_q) + (1-2\eta_r)(s-t).
    \end{align*}
    This lower bound guarantees that $\exp(-\int_t^s \mu_\gamma(u)du) \leq \eta'< \infty$ for some constant $\eta_{q,r}$ depending only on $\eta_q$ and $\eta_r$. Hence, $\|R_\gamma(s,t)\| \leq\eta_{q,r}\eta_H \eqqcolon \eta_R$. 

    Finally, we establish a uniform bound on $\mu_\gamma$. Using \eqref{eqn:diff_q} we obtain that \begin{align*}
        \mu_\gamma'(s) = [\gamma + f''(q_\gamma(s))](1 + 2r_\gamma(s) - 2q_\gamma(s)\mu_\gamma(s)).
    \end{align*}
    We increase $\gamma_0$ if necessary so that $\gamma_0 \geq 1 + \sup_{q\in [1/2, \eta_q]}|f''(q)|$, ensuring that the first term in the product above is positive. Since $q_\gamma \geq 1/2$ and $|r_\gamma| \leq \eta_r$ it follows that $\mu_\gamma'(s)< 0$ when $\mu_\gamma(s) = 2(1+\eta_r)$ and $\mu'_\gamma(s) > 0$ when $\mu_\gamma(s)= -2(1+\eta_r)$. Thus, $|\mu_\gamma(s)| \leq 2(1+\eta_r)$ since  we start at $\mu_\gamma(0) = 1/2$.
\end{proof}

\begin{proof}[Proof of Theorem \ref{thm2}]
    In Lemma \ref{lem:gamma_bounds} we showed that 
    \begin{align*}
    \{ C_\gamma(s,t), R_\gamma(s,t), q_\gamma(s), \mu_\gamma (s): s, t\in [0,T], \gamma \geq \gamma_0\}
    \end{align*}
    is uniformly bounded. Because $(C_\gamma, R_\gamma, q_\gamma)$ satisfy the integro-differential equations \eqref{l2.1}-\eqref{l2.3} and $\xi, f$ are polynomials, we have that 
    \begin{align*}
        \{ \partial_sC_\gamma(s,t), \partial_s R_\gamma(s,t), q'_\gamma(s): s, t\in [0,T], \gamma \geq \gamma_0\}
    \end{align*} is uniformly bounded. Further, from \eqref{l2.6}, \eqref{l2.8} are the uniform boundedness on $C, R$ established above, it follows that \begin{align*}
        \{\partial_t C_\gamma (s,t): s,t\in [0,T], \gamma \geq \gamma_0\}
    \end{align*} 
    is also uniformly bounded.
    
    Now we obtain a uniform bound on $\partial_tH_\gamma$. Differentiating the tree representation \eqref{expn:H} and expanding the recursion in Lemma \ref{lem_app}, we observe that each tree $\T\in\B$ with $|\T|=n\ge1$ contributes an integral over an ordered $2n$-dimensional simplex. The cancellation of the scalar factors in the tree operator $\A$ and the uniform bound on $C_\gamma$ imply that its integrand is bounded by $\eta^n$, for some $\eta<\infty$ independent of $\gamma\ge\gamma_0$. Since $t$ appears only as the lower integration limit, differentiation fixes the smallest integration variable at $t$, and hence 
    \begin{align*}
        \|\partial_t\omega_{\T,\gamma}(s,t)\| \leq \eta^n\frac{(s-t)^{2n-1}}{(2n-1)!}.
    \end{align*}
Thus, as in \eqref{eqn:bdd_h}, the differentiated series converges uniformly on $[0,T]$, justifying termwise differentiation and yielding the desired uniform bound on $\partial_t H_\gamma$. Finally, differentiating \eqref{eqn:exp_r} in $t$, using the boundedness of $H_\gamma, \partial_t H_\gamma, \mu_\gamma$, we have that 
\begin{align*}
        \{\partial_t R_\gamma (s,t): s,t\in [0,T], \gamma \geq \gamma_0\}
    \end{align*} 
    is uniformly bounded.

    We now establish the uniform boundedness of $\{\mu'_\gamma (s), q''_\gamma(s): s\in [0,T], \gamma \geq \gamma_0\}$. Taking a derivative of $r_\gamma$ and using the uniform boundedness of $C_\gamma, R_\gamma, \partial_s C_\gamma, \partial_s R_\gamma$, it is immediate that $\sup_{t\leq T}|r'_\gamma(t)|\leq \eta'_r$ is uniformly over $\gamma \geq \gamma_0$. Differentiating \eqref{eqn:diff_q} we have \begin{align}\label{eq:q_dd}
        q_\gamma''(s) = 2r_\gamma'(s) - 2[\mu_\gamma(s) + q_\gamma(s)(\gamma + f''(q_\gamma(s)))]q'_\gamma(s).
    \end{align}
    Since $q_\gamma'(0) = 1+2r_\gamma(0) - 2q_\gamma(0)\mu_\gamma(0) = 1-2f'(1) = 0$, we have by integrating the above display that \begin{align*}
        q'_\gamma(s)  = 2\int_0^s \exp\Big(-\int_u^s2[\mu_\gamma(v) + q_\gamma(v)(\gamma + f''(q_\gamma(v)))]dv\Big)r'_\gamma(u)du.
    \end{align*}
    Noting that \begin{align*}
        2[\mu_\gamma(s) + q_\gamma(s)(\gamma + f''(q_\gamma(s)))] \geq \gamma - 2\eta_\mu -2\eta_q \sup_{1/2\leq q\leq \eta_q}|f''(q)| \geq \frac{\gamma}{2}
    \end{align*}
    for sufficiently large $\gamma$, we have that \begin{align*}
        |q'_\gamma(s)|\leq 2\eta'_r\int_0^se^{-\gamma(s-u)/2}du \leq \frac{4\eta_r'}{\gamma}.
    \end{align*}
    Thus, from the above display and the identity 
\begin{align*}
        \mu'_\gamma(s) = [\gamma +f''(q_\gamma(s))]q'_\gamma(s),
    \end{align*}
    the uniform boundedness of $\{\mu'_\gamma (s): s\in [0,T], \gamma \geq \gamma_0\}$ follows. Further, from \eqref{eq:q_dd} it follows that $\{q''_\gamma (s): s\in [0,T], \gamma \geq \gamma_0\}$ is also uniformly bounded.
    
    The uniform bounds obtained above imply that
    \begin{align*}
        \{ C_\gamma(s,t), R_\gamma(s,t), q_\gamma(s), q'_\gamma(s), \mu_\gamma(s): s, t\in [0,T], \gamma \geq \gamma_0\}
    \end{align*}
    is an equicontinuous family. The Arzelà-Ascoli theorem guarantees the existence of a sequence $\gamma_k \to \infty$ and a limit $(C, R, q, \wt q, \mu)$ such that \begin{align*}
        (C_{\gamma_k}, R_{\gamma_k}, q_{\gamma_k}, q'_{\gamma_k}, \mu_{\gamma_k}) \to (C, R, q, \wt q, \mu)
    \end{align*} uniformly for $s, t\in [0,T]$.
    
    Since $\mu_{\gamma_k}(s) = \gamma_k(q_{\gamma_k}(s) - 1) + f'(q_{\gamma_k}(s))$ is uniformly bounded by $\eta_\mu$, dividing this identity by $\gamma_k$ forces $\sup_{s \in [0,T]} |q_{\gamma_k}(s) - 1| \to 0$ as $k \to \infty$, and thus $q\equiv 1$ uniformly. From the estimate $|q'_\gamma| \leq 4\eta_r'/\gamma$, it follows that $ q'_{\gamma_k} \to 0$ uniformly, so that $\wt q = q' \equiv 0$, and the spherical constraint $K(s) = C(s,s) = I_2$ holds in the limit. The uniform convergence of $(C_{\gamma_k}, R_{\gamma_k}, q_{\gamma_k}, q'_{\gamma_k}) \to (C,R, 1, 0)$ allows us to substitute $q_{\gamma_k}'(s) \to 0$ and $q_{\gamma_k}(s) \to 1$ into the trace equation \eqref{l2.2} to yield the explicit formula for $\mu(s)$ given in \eqref{hard_mu} (note that $R(s,u)$ and $\xi'(C(u,s))$ commute in $\V$, so their traces are equal).

    To pass the sub-sequential convergence to the full sequence, we must show that the limiting system admits a unique solution. We note that $\mu(s)$ is a locally Lipschitz functional of $C$ and $R$. Substituting \eqref{hard_mu} into the limits of \eqref{l2.1} and \eqref{l2.3} yields a closed system of Volterra integro-differential equations. The generalized Grönwall bound established in Proposition \ref{prop_uniq} applies, implying uniqueness of the limit.
\end{proof}

\section{Stationarity at high temperature}\label{sec6}

In this section, we work with even spin glasses and use $(C,R)$ to denote the solution to \eqref{hard_C}-\eqref{hard_R}, subject to the conditions \eqref{l2.4'}, on $\Pi\coloneqq \{(s,t): 0\leq t\leq s<\infty\}$. The passage from finite to infinite time follows from a compact-interval patching argument. More precisely, for each $T<\infty$, let $(C_T,R_T)$ denote the unique solution to \eqref{hard_C}-\eqref{hard_R} on $\{(s,t): 0\leq t\leq s\leq T\}$. If $0<T_1<T_2<\infty$, then the restriction of $(C_{T_2},R_{T_2})$ to $\{(s,t): 0\leq t\leq s\leq T_1\}$ satisfies the same equations and initial conditions as $(C_{T_1},R_{T_1})$. Uniqueness on $[0,T_1]$ therefore implies that this restriction coincides with $(C_{T_1},R_{T_1})$. Hence, the family $(C_T,R_T)_{T\geq 1}$ is compatible and defines a unique solution $(C,R)$ to \eqref{hard_C}-\eqref{hard_R} on $\R_+$.

\begin{lemma}[Exponential decay of $C$ and $R$]\label{exp_decay_cr}
There exist finite constants $c_0 > 1$, $c_1 > 0$ and $\beta_0 > \beta_1 > 0$ depending only on $\xi$ such that for all $\beta \leq \beta_1$ and all $s\geq t$, \begin{align*}
    \|R(s,t)\| \leq c_0 e^{-\delta_\beta (s-t)}\qquad  \text{and} \qquad \|C(s,t)\| \leq b_\beta e^{-\delta_\beta (s-t)/2}
\end{align*} where \begin{align*}
    \delta_\beta = \frac{1}{4}\Big(1-\frac{\beta}{\beta_0}\Big) \qquad \text{and} \qquad b_\beta = \Big(1 - \frac{256c_0c_1\beta^2}{3(1-(\beta/\beta_0)^2)}\Big)^{-1}.
\end{align*}
\end{lemma}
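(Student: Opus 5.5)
We will prove both bounds with a single continuity (bootstrap) argument in the time horizon, working directly with the hard-spherical system \eqref{hard_C}--\eqref{hard_mu} on $\Pi$. By Lemma \ref{lem:algebra}, which passes to the limit $\gamma\to\infty$, we have $C,R\in\V$. Hence all matrices commute, $\|e^{\alpha\tau\Theta}\|=1$, and $\|C(s,t)\|\le 1$ by $C(s,s)=I_2$ together with Cauchy--Schwarz, inherited from \eqref{p2.61}. As in \eqref{defn_H}, we write
\[
R(s,t)=e^{-\int_t^s\mu(u)du}\,e^{\alpha(s-t)\Theta}H(s,t).
\]
The rotation cancels inside $\A$, so $\|\A[M]\|\le \|\xi''(C)\|\,\|M\|$.

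\textbf{Step 1: control of $\mu$.} From \eqref{hard_mu}, $\mu(s)\ge \tfrac12-\beta^2 c_1\int_0^s \|R(s,u)\|\,du$, where $c_1$ bounds $\|\xi'(C)\|+\|\xi''(C)\|\,\|C\|$ on $\|C\|\le1$. Define the bootstrap time
\[
T_*=\sup\{T:\ \|R(s,t)\|\le c_0e^{-\delta_\beta(s-t)} \text{ for } t\le s\le T\}.
\]
This is positive by continuity, since $R(s,s)=I_2$ and $c_0>1$. On $[0,T_*]$ we get
\[
\mu\ge \tfrac12-\beta^2c_1c_0/\delta_\beta\ge \tfrac12-O(\beta^2).
\]

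\textbf{Step 2: closing the bound on $R$.} Taking norms in \eqref{hard_R} and using the integrating factor gives
\[
\|R(s,t)\|\le e^{-\int_t^s\mu}+\beta^2\|\xi''\|_\infty\int_t^s e^{-\int_u^s\mu}\int_t^u\|R(u,v)\|\,\|R(v,t)\|\,dv\,du .
\]
Plugging in the bootstrap bound makes the inner integral at most $c_0^2(u-t)e^{-\delta_\beta(u-t)}$. Since $\mu\ge \tfrac12-O(\beta^2)$ exceeds $\delta_\beta\le 1/4$ by a margin of about $1/4$, the whole right-hand side is at most
\[
e^{-\delta_\beta(s-t)}\bigl(1+O(\beta^2c_0^2)\bigr).
\]
This is strictly below $c_0e^{-\delta_\beta(s-t)}$ once $\beta\le\beta_1$, with $c_0=2$ say. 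The strict improvement on $[0,T_*]$ plus continuity forces $T_*=\infty$. The constant $\beta_0$ enters to make $\delta_\beta=\frac14(1-\beta/\beta_0)$ the admissible rate: the decay gap $\mu-\delta_\beta\ge \tfrac14+\tfrac{\beta}{4\beta_0}-O(\beta^2)$ must stay positive.

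\textbf{Step 3: decay of $C$.} Set $\|C\|_\delta=\sup_{s\ge t}e^{\delta_\beta(s-t)/2}\|C(s,t)\|$, computed on finite horizons to keep it finite. Integrating \eqref{hard_C} from $t$ using the factor $e^{-\int\mu}$ gives
\[
C(s,t)=e^{-\int_t^s\mu}e^{\alpha(s-t)\Theta}\Bigl(I_2+\int_t^s(\dots)\Bigr).
\]
The memory terms are of three kinds:
\begin{itemize}
\item $\int_0^t\xi'(C(u,v))R(t,v)^Tdv$ with $u\ge t$: the odd polynomial satisfies $|\xi'(c)|\le c_1|c|$, and the factor $R(t,v)$ decays in $t-v$, so this term is bounded by $\beta^2c_1c_0\|C\|_\delta e^{-\delta(u-t)/2}/\delta$.
\item $\int_0^u(\xi''\odot R)(u,v)C(v,t)\,dv$: splitting at $v=t$ and using $e^{-\delta(u-v)}e^{-\delta|v-t|/2}\le e^{-\delta(u-t)/2}e^{-\delta(u-v)/2}$, this is bounded by $\beta^2c_1c_0\|C\|_\delta e^{-\delta(u-t)/2}\cdot O(1/\delta)$.
\item The remaining pieces, which are of the same type as the previous one.
\end{itemize}
Integrating against $e^{-\int_u^s\mu}\le e^{-(1/2-o(1))(s-u)}$ costs a further factor of order $1/(\tfrac12-\delta/2)$. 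The resulting inequality has the form
\[
\|C\|_\delta\le 1+\frac{256c_0c_1\beta^2}{3(1-(\beta/\beta_0)^2)}\,\|C\|_\delta .
\]
Solving it gives $\|C\|_\delta\le b_\beta$. The specific constant $256/3$ comes from bounding $1/\delta_\beta$ and $1/(\mu-\delta_\beta/2)$ in terms of $(1\mp\beta/\beta_0)$; $\beta_1$ must also be small enough that $b_\beta>0$.

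\textbf{Main obstacle.} The main obstacle is Step 2. Closing the bootstrap needs the lower bound on $\mu$ (Step 1), which itself uses the $R$ bound and the a priori $\|C\|\le1$. The argument must therefore be arranged as a single continuity argument in which the margin $\mu-\delta_\beta$ is controlled uniformly. A further check is that $\mu$ being merely bounded below does not break the finite-horizon Gr\"onwall reasoning; for this it suffices that $\mu$ is continuous on every $[0,T]$, which holds by Theorem \ref{thm2}.
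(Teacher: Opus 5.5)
Your Step 2 does not close. After inserting the bootstrap bound, the inner integral is at most $c_0^2(u-t)e^{-\delta_\beta(u-t)}$. The polynomial factor sits on $u-t$, not on $s-u$, so near $u=s$ it is as large as $s-t$. Writing $m=\inf\mu$ and $L=s-t$, the best bound you get for the convolution term is
\[
\beta^2\|\xi''\|_\infty c_0^2\,e^{-\delta_\beta L}\int_0^L e^{-(m-\delta_\beta)y}(L-y)\,dy\;\approx\;\frac{\beta^2\|\xi''\|_\infty c_0^2}{m-\delta_\beta}\,L\,e^{-\delta_\beta L}.
\]
This exceeds $c_0e^{-\delta_\beta L}$ once $L\gtrsim 1/(\beta^2 c_0)$. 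The margin $m-\delta_\beta$ only shortens the outer integration; it does nothing about this linear growth. Adjusting constants cannot fix it: a pointwise exponential bootstrap at a fixed rate never closes for the quadratic self-convolution $\int_t^u R(u,v)R(v,t)\,dv$.

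A sanity check confirms the failure. If your estimate held, the same reasoning with target rate $\frac12-K\beta^2$ ($K$ large) in place of $\delta_\beta$ would show that $R$ decays at rate $\frac12-O(\beta^2)$. But for $\xi(r)=r^2/2$, Section \ref{sec7} gives $r_*(\tau)=e^{-(1+4\beta^2)\tau/2}\ell(\tau)$. This decays at rate $(1-2\beta)^2/2=\frac12-2\beta+2\beta^2$, a loss linear in $\beta$. That linear loss is exactly what $\beta_0$ in $\delta_\beta=\frac14(1-\beta/\beta_0)$ encodes, and your account of $\beta_0$ misses it. For the same reason, your ``main obstacle'' is misidentified. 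The circularity between $\mu$ and $R$ is harmless; the real difficulty is the quadratic term.

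The missing ingredient is the Catalan/binary-tree summation of Lemma \ref{lem:H}, which is what the paper uses. For the conjugated $H(s,t)=e^{\int_t^s\mu}e^{-\alpha(s-t)\Theta}R(s,t)$ that you already introduced, $\|\A[M]\|\le\eta\beta^2\|M\|$ together with \eqref{eqn:bdd_h} gives
\[
\|H(s,t)\|\le\sum_{n\ge 0}\frac{1}{n+1}\frac{(\eta\beta^2)^n(s-t)^{2n}}{(n!)^2}\le e^{2\sqrt{\eta}\,\beta(s-t)} .
\]
Here the Catalan count compensates exactly for the iterated convolutions. With this in place of your Step 2, your bootstrap does close. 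On $[0,T_*]$ you get $\|R(s,t)\|\le e^{-(\frac12-\beta^2c_1c_0/\delta_\beta-2\sqrt{\eta}\beta)(s-t)}$. This is strictly below $c_0e^{-\delta_\beta(s-t)}$ once $2\sqrt{\eta}\beta+\beta^2c_1c_0/\delta_\beta<\frac14$, since $\delta_\beta\le\frac14$ and $c_0>1$.

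The repaired argument differs from the paper only in how $\mu$ is bounded below. The paper does not bootstrap. It imports the finite-$N$ energy bound $|r_\gamma(s)|\le\beta\eta_r$ and lets $\gamma\to\infty$ in \eqref{f_q} to get $\mu\ge\frac14$ outright. Your bootstrap would derive this from the limiting equations alone, which is more self-contained. Your Step 3 matches the paper's weighted-supremum argument and is sound. There the memory terms are linear in $C$ against the faster-decaying kernel $R$, so no polynomial loss arises.
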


\begin{proof}
    We begin with the claim that $\sup_{s,t \geq 0}\|C(s,t)\| <\infty$ uniformly over $\beta>0$. Indeed, since $C(s,t) = c_d(s,t) I_2 + c_a(s,t) \Theta$ and $K(s) = q(s) I_2 = I_2$ are the limits of the positive semi-definite matrices $\E C_{N,\gamma}$ and $\E K_{N,\gamma}$ respectively, Cauchy-Schwarz implies that $|c_d(s,t)|^2 \leq q(s)q(t) = 1$, and the off-diagonal terms $c_a(s,t)$ are then bounded by $c_d(s,s)$ and $c_d(t,t)$ using \eqref{p2.61}. 

    We now establish the desired exponential decay of $R$. Let $H(s,t)$ be defined as in \eqref{defn_H}, but with $X(t) \coloneqq \int_0^t[-\mu(u)I_2 + \alpha \Theta] du$, where $\mu(s)$ is defined in \eqref{hard_mu}. From \eqref{hard_R} it follows that $H(s,t)$ satisfies \eqref{eqn:diff_h}, with the operator $\A$ defined as in \eqref{defn_a} (and $\xi''(\cdot)$ replaced by $\beta^2 \xi''(\cdot)$. Since $C(s,t)$ is uniformly bounded, it follows from the computation \eqref{eqn:bdd_h} that $\|H(s,t)\| \leq \eta e^{\eta'\beta(s-t)}$ for some constants $\eta, \eta' > 0$ (which are dependent only on $\xi$) and all $s \geq t$. To deduce exponential decay of $R$, we first obtain a lower bound on $\mu(s)$. Following the Hamiltonian estimate in the proof of Lemma \ref{lem:gamma_bounds}, we have \begin{align*}
        |r_\gamma(s)| \leq \beta \sup_N \E [\|J\|^N_\infty]\sum_{p=1}^n\frac{|a_{2p}|}{(2p-1)!}(2q_\gamma(s))^p,
    \end{align*}
    and since $q_\gamma(s) \to q(s) = 1$ as $\gamma \to 1$ for each fixed $s$, we obtain that for $\beta\leq1$, $\sup_{s\geq 0, \gamma \geq \gamma_0} |r_\gamma(s)| \leq \beta \eta_r$, where $\eta_r$ is a constant that is independent of $\beta$. Thus, taking $\gamma \to \infty$ in \eqref{f_q}, it follows that  for all $s\geq 0$ and $\beta \leq 1/(4\eta_r)$, we have $\mu(s) \geq (1-2\beta\eta_r) /2 \geq 1/4$, and hence \begin{align}\label{eq::1}
        \|e^{X(s) -X(t)}\| = e^{-\int_t^s \mu(u)du} \leq e^{-(s-t)(1-2\beta \eta_r)/2} \leq e^{-(s-t)/4}.
    \end{align}
    We now set $\beta_0 = 1/(4(\eta_r\vee \eta'))$. Then, the above bounds, together with the fact that $\|e^{\alpha \Theta}\| = 1$ yields that \begin{align*}
        \|R(s,t)\| \leq e^{-\int_t^s \mu(u)du}\|H(s,t)\|\leq \eta e^{-(s-t)/4 + \eta' \beta(s-t)}\leq \eta e^{-\delta_\beta(s-t)}
    \end{align*}
    as desired, with $c_0 = \eta\vee 2 > 1$.
    
    We now turn to prove the exponential decay for $C$. Let $X(t)$ be as in the previous paragraph. Integrating \eqref{hard_C} we have for $s\geq t\geq 0$ that \begin{align*}
        C(s,t) & = e^{X(s) - X(t)} + \beta^2 \int_t^s e^{X(s) - X(u)}\Big[\int_0^t \xi'(C(u,v))R(t,v)^T dv\Big]du \\
        & \qquad  + \beta^2 \int_t^s e^{X(s) - X(u)}\Big[\int_0^u \Big(\xi''(C(u,v)) \odot R(u,v)\Big) C(v,t)dv\Big]du.
    \end{align*}
    Since $\xi'$ is an odd function, the uniform bound on $\|C(s,t)\|$ yields that for some constant $c_1>0$ depending only on $\xi, $
   \begin{align*}
       \|C(s,t)\|, \|\xi''(C(s,t))\| \leq c_1 \qquad \text{and}\qquad \|\xi'(C(s,t))\|\leq c_1\|C(s,t)\|.
   \end{align*} Further, taking $\beta < \beta_0$ so that \eqref{eq::1} holds, and using the exponential decay of $R$ from above, we have that 
    \begin{align*}
        \|C(s,t)\| & \leq e^{-(s-t)/4} + c_0c_1\beta^2 \int_t^s e^{-(s-u)/4}\int_0^t \|C(u,v)\|e^{-\delta_\beta(t-v)}dvdu \\
        & \qquad + c_0c_1 \beta^2 \int_t^s e^{-(s-u)/4}\int_0^u\|C(v,t)\|e^{-\delta_\beta(u-v)}dvdu,
    \end{align*}
    Multiplying both sides by $e^{\delta_\beta(s-t)/2}$ we rewrite the above as \begin{align}\label{exp_lem_1}
        e^{\delta_\beta(s-t)/2}\|C(s,t)\| & \leq e^{-(1/2 - \delta_\beta)(s-t)/2} + c_0c_1\beta^2 e^{\delta_\beta(s-t)/2}\int_t^s e^{-(s-u)/4}\int_0^t \|C(u,v)\|e^{-\delta_\beta(t-v)}dvdu \notag \\
        & \qquad + c_0c_1\beta^2 e^{\delta_\beta(s-t)/2}\int_t^s e^{-(s-u)/4}\int_0^t \|C(v,t)\| e^{-\delta_\beta(u-v)}dvdu\notag \\
        & \qquad +c_0c_1\beta^2 e^{\delta_\beta(s-t)/2} \int_t^s e^{-(s-u)/4}\int_t^u \|C(v,t)\| e^{-\delta_\beta(u-v)}dvdu
    \end{align}
    Set \begin{align*}
    \epsilon(s,t) & \coloneqq \sup\{e^{\delta_\beta(u-v)/2}\|C(u,v)\|: u\in [0,s], v\in [0,t],  v\leq u\}\\
    & = \sup\{e^{\delta_\beta(u-v)/2}\|C(v,u)\|: u\in [0,s], v\in [0,t],  v\leq u\}.
    \end{align*}
    Thus, for any $0\leq v\leq t\leq u\leq s$, we have that $\|C(u,v)\| \leq \epsilon (s,t) e^{-\delta_\beta(u-v)/2}$. Noting that $\delta_\beta < 1/4$ for $\beta < \beta_0$, from \eqref{exp_lem_1} we get \begin{align*}
         e^{\delta_\beta(s-t)/2}\|C(s,t)\| & \leq 1  + c_0c_1\beta^2 \epsilon(s,t)\int_t^s e^{-(1/2-\delta_\beta)(s-u)/2}\int_0^t e^{-3\delta_\beta(t-v)/2}dv \\
        & \qquad + c_0c_1\beta^2 \epsilon(s,t)\int_t^s e^{-(1/2-\delta_\beta)(s-u)/2 - \delta_\beta(u-t)/2}du\int_0^te^{-3\delta_\beta(t-v)/2}dv\\
        & \qquad + c_0c_1\beta^2 \epsilon(s,t) \int_t^s e^{-(1/2 - \delta_\beta)(s-u)/2}\int_t^u e^{-\delta_\beta(u-v)/2}dvdu\\
        & \leq 1 + c_0c_1 \beta^2 \epsilon(s,t) \int_t^s e^{-(1/2 - \delta_\beta)(s-u)/2}\Big[\frac{2}{3\delta_\beta}\Big(1+e^{-\delta_\beta(u-t)/2}\Big) + \frac{2}{\delta_\beta} \Big(1-e^{-\delta_\beta(u-t)/2}\Big)\Big]du\\
        & \leq 1 + \frac{8c_0c_1\beta^2}{3\delta_\beta}\varepsilon(s,t) \int_t^s e^{-(1/2-\delta_\beta)(s-u)/2}du \leq 1+ \frac{16c_0c_1\beta^2}{3\delta_\beta(1/2-\delta_\beta)}
\varepsilon(s,t)
    \end{align*}
    Taking supremum over the domain on which $\varepsilon(s,t)$ is defined and plugging the expression for $\delta_\beta$ yields \begin{align*}
        \varepsilon(s,t) \leq 1 + m_\beta \varepsilon(s,t)
    \end{align*} where \begin{align*}
        m_\beta \coloneqq \frac{256 c_0c_1\beta^2}{3(1-(\beta/\beta_0)^2)}.
    \end{align*}
    Thus, for \begin{align*}
        \beta < \beta_1 \coloneqq \frac{\beta_0}{\sqrt{3+ 256 c_0c_1\beta_0^2}} < \beta_0, 
    \end{align*}
    we have $m_\beta <1$, and hence $\|C(s,t)\| \leq b_\beta e^{-\delta_\beta(s-t)/2}$, with $b_\beta = 1/(1-m_\beta)$.
    \end{proof}

    \begin{remark}
    In the proof of \cite[Lemma 3.2]{Dem}, the quantity $\Delta(t,s):=\sup_{0\leq u\leq t,\;0\leq v\leq s}C(u,v)$ is identically $1$, since $0\leq C\leq 1$ and $C(0,0)=1$. Thus the asserted decay estimate for $\Delta$ cannot hold, leaving a gap in the subsequent Gr\"onwall argument. Our weighted supremum avoids this issue.
\end{remark}

For $c >1$ and $0<\lambda<1/4$, let $\S(c,\lambda)$ denote the set of bounded functions $(C,R)\in \cC([0,\infty)^2)^{2\times 2}\times\cC(\Pi)^{2\times 2}$ in $\V$ satisfying \eqref{l2.4'} and
\begin{align}\label{exp_decay}
\|C(s,t)\|\leq c e^{-\lambda(s-t)/2},
\qquad
\|R(s,t)\|\leq c e^{-\lambda(s-t)},
\end{align}
for all $0\leq t\leq s$. We equip $\S(c,\lambda)$ with the norm \begin{align*}
    \|(C,R)\|_\infty\coloneqq \sup_{s,t\geq 0} \|C(s,t)\| + \sup_{s\geq t}\|R(s,t)\|, 
\end{align*}
which makes $\S(c,\lambda)$ is a closed subset of the Banach space $(\cC([0,\infty)^2)^{2\times 2}\times\cC(\Pi)^{2\times 2}, \|\cdot\|_\infty)$, and hence complete.

For convenience, we record here some estimates that will be used frequently in the upcoming proofs. Note that since $\xi$ is an even polynomial, $\xi', \xi''$ are uniformly Lipschitz on the bounded interval $[-c, c]$. Hence, for $(C_i, R_i)_{i=1,2} \in \S(c,\lambda)$ it follows from \eqref{exp_decay} that there exists some constant $\eta = \eta(\xi,c)$ such that\begin{align} \label{nu_bds}
    & \|C_i\| + \|R_i\| + \|\xi'(C_i)\| + \|\xi''(C_i)\| \leq \eta \qquad \text{and}\qquad  \|\xi'(C_1) - \xi'(C_2)\| + \|\xi''(C_1) - \xi''(C_2)\| \leq \eta \|C_1 - C_2\|.
\end{align}

Consider the map $\Xi$ defined on $\S(c,\lambda)$ which maps $(C, R)$ to $(\wt C, \wt R)$, where $(\wt C, \wt R)$ solves the following set of equations for $s\geq t$:\begin{align}
        \partial_s \wt C(s,t) & = (- \mu_{C,R}(s) + \alpha \Theta)\wt C(s,t)  + \beta^2\int_0^t\xi'(C(s,u))R(t,u)^Tdu \notag \\
        & \qquad + \beta^2\int_0^s\Big(\xi''(C(s,u))\odot R(s,u)\Big)C(u,t)du \label{hard_c}\\
    \partial_s \wt R(s,t) & =  (- \mu_{C,R}(s)+ \alpha \Theta )\wt R(s,t) +  \beta^2\int_t^s\Big(\xi''(C(s,u))\odot \wt R(s,u)\Big)\wt R(u,t) du,\label{hard_r}
    \end{align}
subject to the conditions \eqref{l2.4'}, where $\mu_{C,R}$ is  defined as the right-hand side of \eqref{hard_mu}. To see that $\Xi$ is well defined we need to show that the above system admits a unique solution. To that end, fix $(C,R)$ and set
\begin{align}\label{conj}
X_{C,R}(t) \coloneqq \int_0^t\Big(-\mu_{C,R}(u)I_2+\alpha\Theta\Big)du \qquad \text{and}\qquad 
\wt H(s,t) \coloneqq e^{-[X_{C,R}(s)-X_{C,R}(t)]}\wt R(s,t).
\end{align}
From \eqref{hard_r} we obtain that $\wt H$ satisfies \eqref{eqn:int_H}. The absolutely convergent binary-tree representation from Lemma \ref{lem:H} therefore gives the existence and uniqueness of $\wt H$, and hence of $\wt R$. Once $C$ and $R$ are fixed, equation \eqref{hard_c} is a linear nonhomogeneous equation for $\wt C$, so its existence and uniqueness are immediate.

Let $c_0, c_1, \beta_0, \beta_1, \delta_\beta$ be as in Lemma \ref{exp_decay_cr}, and for brevity, write $\delta_1 \coloneqq \delta_{\beta_1}$. We show that the map $\Xi$ admits a fixed point in the space $\cR(c_0, \delta_1)$, which the set of all pairs $(C,R)\in \S(c_0, \delta_1)$ such that for every $\tau \geq 0$, the stationary limits of $C$ and $R$, defined as\begin{align*}
    C_*(\tau) \coloneqq \lim_{t\to \infty}  C(t+\tau , t) \qquad \text{and} \qquad R_*(\tau)\coloneqq \lim_{t\to \infty} R(t+\tau, t),
\end{align*}
exist as continuous functions on $[0,\infty)$. Note that $\cR(c_0, \delta_1)$ is non-empty since the pair $(C_0,R_0)= (e^{-\delta_1(s-t)/2}I_2, e^{-\delta_1(s-t)}I_2)$ belongs to this set.

\begin{prop}\label{contraction}
Let $c_0, c_1, \beta_0, \beta_1, \delta_\beta$ be as in Lemma \ref{exp_decay_cr}. Then there exists $\beta_
2 = \beta_2(\xi) > 0$ such that for all $\beta \leq \beta_2$, we have $\Xi(\S(c_0, \delta_1)) \subseteq \S(c_0, \delta_1)$ and $\Xi(\cR(c_0, \delta_1)) \subseteq \cR(c_0, \delta_1)$.
\end{prop}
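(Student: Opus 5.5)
Fix $(C,R)\in\S(c_0,\delta_1)$ and write $(\wt C,\wt R)=\Xi(C,R)$. The plan is to reuse the mechanism of Lemma \ref{exp_decay_cr}, now with the \emph{input} pair $(C,R)$ already obeying \eqref{exp_decay}, so no a priori physical bound on $\wt C$ is needed. \textbf{Step 1 (the multiplier).} Using \eqref{nu_bds} and the decay of $C,R$, the integrand in $\mu_{C,R}(s)$ is bounded by $\eta e^{-\delta_1(s-u)}$. Hence $|\mu_{C,R}(s)-\tfrac12|\le \beta^2\eta/\delta_1$, and for $\beta$ small $\mu_{C,R}\ge 3/8$. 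Since $e^{\alpha\Theta}$ is a rotation, this gives $\|e^{X_{C,R}(s)-X_{C,R}(t)}\|\le e^{-3(s-t)/8}$.

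\textbf{Step 2 (bounding $\wt R$).} With $\wt H$ as in \eqref{conj}, the binary-tree bound \eqref{eqn:bdd_h} with $\xi''$ replaced by $\beta^2\xi''$ gives $\|\wt H(s,t)\|\le \eta e^{\eta'\beta(s-t)}$. We need the constant to be at most $c_0$, and we can arrange this because the tree series starts with $I_2$: it is $\le \sum_n \frac{(\eta\beta^2)^n(s-t)^{2n}}{(n+1)(2n)!}\le\cosh(\sqrt{\eta}\beta(s-t))\le e^{\sqrt\eta\beta(s-t)}$. Then $\|\wt R(s,t)\|\le e^{-(3/8-\sqrt\eta\beta)(s-t)}\le e^{-\delta_1(s-t)}\le c_0e^{-\delta_1(s-t)}$ once $\sqrt\eta\beta\le 3/8-1/4$, recalling $\delta_1<1/4$.

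\textbf{Step 3 (bounding $\wt C$).} Use Duhamel's formula on \eqref{hard_c}, which is linear in $\wt C$ with a source built from the inputs:
\[
\wt C(s,t)=e^{X(s)-X(t)}+\beta^2\int_t^s e^{X(s)-X(u)}\Big[\int_0^t\xi'(C(u,v))R(t,v)^Tdv+\int_0^u(\xi''(C(u,v))\odot R(u,v))C(v,t)dv\Big]du .
\]
The inputs all obey \eqref{exp_decay}. The bracket can be bounded by $\beta^2\eta\, e^{-\delta_1(u-t)/2}/\delta_1$ by splitting $v\le t$ and $t\le v\le u$, exactly as in the computation following \eqref{exp_lem_1}; now $\epsilon$ is replaced by the known constant $c_0$, so no fixed-point inequality is needed. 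Since $\delta_1/2<3/8$, this yields
\[
\|\wt C(s,t)\|\le e^{-3(s-t)/8}+\beta^2\eta'e^{-\delta_1(s-t)/2}\le (1+\beta^2\eta')e^{-\delta_1(s-t)/2}\le c_0e^{-\delta_1(s-t)/2}
\]
for small $\beta$, using $c_0>1$.

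\textbf{Step 4 (remaining membership conditions).} The conditions \eqref{l2.4'} and membership in $\V$ follow from the initial data and Lemma \ref{lem:algebra}'s closure computation. For $t\le s$ we define $\wt C(t,s)=\wt C(s,t)^T$; checking $\wt C(s,s)=I_2$ is by construction.

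\textbf{Step 5 ($\Xi(\cR)\subseteq\cR$).} Suppose the stationary limits $C_*,R_*$ exist. First, $\mu_{C,R}(t)\to\mu_*$ by dominated convergence: substitute $u=t-\tau$, use the integrable bound $e^{-\delta_1\tau}$, and take pointwise limits. Next, for $\wt R$, write $\wt H(t+\tau,t)$ through the tree series. Each $\omega_\T(t+\tau,t)$ is an integral over a simplex in shifted variables whose integrand converges pointwise. The conjugations $e^{\pm\alpha\cdot\Theta}$ in $\A$ depend only on differences of times, because everything lies in the commutative algebra $\V$. Termwise limits then pass by the uniform summable bound. The exponential factor converges to $e^{(-\mu_*+\alpha\Theta)\tau}$, since $\int_t^{t+\tau}\mu_{C,R}\to\mu_*\tau$. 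For $\wt C$, apply the Duhamel formula with the change of variables $u=t+\sigma$, $v=t-w$. The integrands converge pointwise and are dominated by $e^{-3(\tau-\sigma)/8}e^{-\delta_1 w/2}$-type integrable bounds, so $\wt C(t+\tau,t)$ converges. Continuity of the limits in $\tau$ follows from local uniform convergence.

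\textbf{Main obstacle.} I expect the main obstacle to be Step 5, namely justifying the limit interchange for the nested tree integrals and the term $\int_0^t$ with $R(t,v)$, whose range grows with $t$. The exponential decay weights from $\S(c_0,\delta_1)$ are what make dominated convergence work. A secondary delicacy is keeping the constant in Steps 2--3 at most $c_0$, which is why the bounds there are tracked with leading coefficient exactly $1$.
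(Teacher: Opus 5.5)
Your proposal is correct and follows essentially the same route as the paper: a uniform lower bound on $\mu_{C,R}$, Duhamel's formula with the $v\le t$ / $t\le v\le u$ splitting for $\wt C$, conjugation plus the binary-tree bound for $\wt R$, and dominated convergence for the stationary limits. For the tree series you pass to the limit termwise, whereas the paper feeds $\zeta_t(u,v)=\eta\beta^2\|C(t+u,t+v)-C_*(u-v)\|$ into \eqref{diff_w}; the two amount to the same thing. Two small points: you dropped the Catalan count $\binom{2n}{n}$, so the tree sum is really $\sum_n x^{2n}/((n+1)(n!)^2)$, which is bounded by $e^{2x}$ rather than $\cosh x$ (still with leading constant $1$, so nothing changes); and the local uniform convergence you use for continuity of $\wt C_*,\wt R_*$ should be justified, as in the paper, by the uniform-in-$t$ bound on $\partial_s\wt C$ and $\partial_s\wt R$.
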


\begin{proof}
For brevity, we shall write $\S = \S(c_0, \delta_1)$ and $\cR = \cR(c_0, \delta_1)$. Throughout the proof, we shall denote by $\eta,\eta' > 0$ constants that are dependent only on $\xi$, and whose values may change at every occurrence.

We start by recording the following lower bound on $\mu_{C,R}$ that holds uniformly for all $(C,R)\in\S$, thanks to \eqref{exp_decay}: for all $s\geq 0$,
\begin{align}\label{lbd_mu}
\mu_{C,R}(s) &\geq \frac{1}{2}-\eta(c_0,\xi)^2\beta^2 \int_0^s e^{-3\delta_1(s-u)/2}du \geq\frac{1}{4},
\end{align}
for some constant $\eta(c_0,\xi)$, provided that $\beta$ is sufficiently small. Henceforth, we will assume that $\beta$ is small enough such that \eqref{lbd_mu} holds. 

We first show that $\Xi$ maps $\S$ into itself. To that end, we integrate \eqref{hard_c} over $s\geq t$ and use $\wt C(t,t) = I_2$ to get \begin{align}\label{int_wt_c}
    \wt C(s,t) & = e^{X_{C,R}(s)-X_{C,R}(t)}
        +\beta^2\int_t^s e^{X_{C,R}(s)-X_{C,R}(v)} \Big[\int_0^t \xi'(C(v,u))R(t,u)^Tdu \notag \\
        & \qquad +\int_0^v \Big(\xi''(C(v,u))\odot R(v,u)\Big)C(u,t) du\Big]dv.
\end{align}
With the previous estimate on $\mu_{C,R}$ and using $\delta_1 < 1/4$, we get that \begin{align*}
    \|\wt C(s,t)\| & \leq e^{-(s-t)/4} + \eta\beta^2 \int_t^s e^{-(s-v)/4} \Big[\int_0^t e^{-\delta_1(v-u)/2 -\delta_1(t-u)}du + \int_0^v e^{-\delta_1|t-u|/2 - \delta_1(v-u)} du\Big]dv\\
    & \leq e^{-(s-t)/4} + \frac{8\eta\beta^2}{3\delta_1} \int_t^s e^{-(s-v)/4 - \delta_1(v-t)/2}dv\\
    & \leq \Big(1\vee \frac{32 \eta \beta^2}{3\delta_1(1-2\delta_1)}\Big) e^{-\delta_1(s-t)/2} \leq c_0 e^{-\delta_1(s-t)/2},
\end{align*}
where the final inequality is achieved by reducing $\beta$ if necessary. On the other hand, conjugating $\wt R$ with $X_{C,R}$ as in \eqref{conj}, the bound \eqref{lbd_mu} and the computation in the binary-tree expansion lemma \ref{lem:H} yields
\begin{align*}
    \|\wt R(s,t)\| &\leq e^{-(s-t)/4} \sum_{n\geq0} \frac{1}{n+1} \frac{\eta^n\beta^{2n}(s-t)^{2n}}{(n!)^2}\leq e^{-(1/4-\eta'\beta)(s-t)} \leq c_0e^{-\delta_1(s-t)},
\end{align*}
where the final bound is obtained by choosing $\beta$ small enough if necessary.
 This establishes that $\Xi(\S) \subseteq \S$.

We now show that $\Xi(\cR) \subseteq \cR$. Let $(C, R)\in \cR$ and $(C_*, R_*)$ denote the stationary limits. We want to show that $(\wt C, \wt R) = \Xi(C,R) \in \cR$. Recall from \eqref{conj} that for $t, \tau \geq 0$,  \begin{align}\label{undo_conj}
    \wt R(t+ \tau, t) = \exp\Big(-\int_t^{t + \tau} \mu_{C,R}(u)du+\alpha \tau \Theta\Big) \wt H(t+\tau, t).
\end{align}
We start by identifying the stationary limit of $\wt H$ defined in \eqref{conj}. For $t \geq 0, u\geq v \geq 0$, consider the operators \begin{align*}
    \A_t[M(u,v)]
& = \beta^2 e^{-\alpha(u-v)\Theta} \Big(\xi''(C(t+u, t+v))\odot [e^{\alpha(u-v)\Theta}M(u,v)]\Big),\\
    \A_*[M(u,v)] & = \beta^2 e^{-\alpha(u-v)\Theta} \Big(\xi''(C_*(u-v))\odot[e^{\alpha(u-v)\Theta}M(u,v)]\Big).
\end{align*} 
Recall that $\B$ denotes the set of finite rooted binary trees. Based on $\A_t$ and $\A_*$, we construct a family of weight matrices as follows: for the trivial tree $\emptyset$ consisting of only one vertex, $\omega_{t,\emptyset} = \omega_{*,\emptyset} \equiv I_2$, and $(\omega_{t,\T})_{\T \in \B}, (\omega_{*,\T})_{\T \in \B}$ are defined recursively from $\A_t$ and $\A_*$ using \eqref{wt_const} respectively. It follows from Lemma \ref{lem:H} that \begin{align*}
    \wt H(t+u, t+v) = \sum_{\T \in \B} \omega_{t,\T}(u,v).
\end{align*}
Following the inductive construction of $\omega_{*,\T}$, it is easy to see that $\omega_{*,\T}(u,v)$ is a function of $u-v$. We claim that \begin{align*}
    \wt H_*(t)\coloneqq \sum_{\T\in \B} \omega_{*,\T}(t, 0)
\end{align*}
is the stationary limit of $\wt H$. Indeed, since $C$, and hence $C_*$ is uniformly bounded, it follows by the computation \eqref{eqn:bdd_h} that the series in the above display is absolutely convergent. Further, with \begin{align*}
    \zeta(u,v) = \zeta_t(u,v)\coloneqq \eta\beta^2 \| C(t+u, t+v) - C_*(u-v)\|,
\end{align*} \eqref{diff_a} holds for $\A = \A_t$ and $\wt A = \A_*$, so by the computation in Lemma \ref{lem:H} it follows that \begin{align*}
    \|\wt H(t+u, t + v) - \wt H_*(u-v)\| \leq \eta e^{\eta'\beta(u-v)}\int_v^u\int_y^u \zeta_t(x,y) dxdy.
\end{align*}
Since $C_*$ is the stationary limit of $C$, we have that $\zeta_t(u,v) \to 0$ as $t\to \infty$ for each $u \geq v \geq 0$. Also, by the boundedness of $C$, we have that $\zeta_t$ is uniformly bounded and therefore, by the dominated convergence theorem, we have that for every fixed $u \geq v\geq 0$, \begin{align}\label{tti_h}
    \wt H_* (u-v) = \lim_{t\to \infty}\wt H(t+u, t+v).
\end{align}

We now turn to computing the limit of $\mu_{C,R}(t)$ as $t\to \infty$. Setting $v = s - u$, we have that for each fixed $v\geq 0$, the integrand in \eqref{hard_mu} converges pointwise to \begin{align*}
      \tr\Big[\xi'(C_*(v))R_*(v)^T + \Big(\xi''(C_*(v))\odot R_*(v)\Big)C_*(v)^T\Big]
 \end{align*}
 as $u \to \infty$. Using the exponential decay \eqref{exp_decay} and the bounds \eqref{nu_bds}, it follows that the  above display is bounded by $\eta e^{-3\delta_1v/2}$, so by the dominated convergence theorem we have that $\lim_{t\to \infty} \mu_{C,R}(t) = \mu_*$, where $\mu_*$ is defined in \eqref{mu*}.
 
The boundedness of $C,R$ implies the boundedness of $\mu_{C,R}$, and hence of $\mu_*$, and thus another application of the dominated convergence theorem implies that for each $u \geq v \geq 0$ fixed, \begin{align*}
    \exp\big(-\int_{t+v}^{t+u}\mu_{C,R}(s)ds\big) \to e^{-\mu_*(u-v)}.
\end{align*}
Setting $v= 0$ and $u = \tau$ in \eqref{tti_h} and the above display, it now follows from \eqref{undo_conj} that for every $\tau \geq 0$ fixed, the limit \begin{align}\label{tti_exp_mu}
    \lim_{t \to \infty} \wt R(t+\tau, t) = e^{-(\mu_* - \alpha\Theta)\tau} \wt H_*(\tau) \eqqcolon \wt R_*(\tau)
\end{align}
exists. We now show the same for $\wt C$. For $t, \tau \geq 0$, we have from \eqref{int_wt_c} that\begin{align*}
    \wt C(t+\tau, t) & = \exp\Big(-\int_t^{t+\tau}\mu_{C,R}(u)du+\alpha\tau \Theta\Big) + \beta^2\int_0^\tau \exp\Big(-\int_{t+u}^{t+\tau} \mu_{C,R}(v)dv + \alpha(\tau - u)\Theta\Big) \\
    & \qquad \cdot\Big[\int_{-t}^0\xi'(C(t+u, t+v))R(t,t+v)^T dv\\
    & \qquad \qquad + \int_{-t}^u\Big(\xi''(C(t+u,t+v))\odot R(t+u,t+v)\Big)C(t+v,t)dv\Big]du.
\end{align*}
As we did while establishing the limit of $\mu_{C,R}(t)$, we can use dominated convergence theorem to justify that the inner integrals in the above display converge to \begin{align*}
    \int_u^\infty \xi'(C_*(v))R_*(v-u)^Tdv \qquad \text{and} \qquad \int_0^\infty \Big(\xi''(C_*(v))\odot R_*(v)\Big)C_*(u-v)dv
\end{align*}
respectively. Hence, from \eqref{tti_exp_mu} we have that for every fixed $\tau \geq 0$, we have that the limit \begin{align*}
    \wt C_*(\tau) & \coloneqq \lim_{t \to \infty} \wt C(t+\tau, t)\\
    &   = e^{-(\mu_{C_*,R_*}- \alpha\Theta)\tau} + \beta^2 e^{-(\mu_{C_*,R_*}- \alpha\Theta)\tau}\int_0^\tau e^{(\mu_{C_*,R_*}-\alpha \Theta)u} \Big[\int_u^\infty \xi'(C_*(v))R_*(v-u)^Tdv \\
    & \qquad + \int_0^\infty \Big(\xi''(C_*(v))\odot R_*(v)\Big)C_*(u-v)dv\Big]du
\end{align*}
exists. \eqref{hard_c}-\eqref{hard_r} and the fact that $\Xi(\S) \subset \S$ implies that \begin{align*}
    \sup_{s\geq t} \Big(\|\partial_s \wt C(s,t)\| + \|\partial_s\wt R(s,t)\|\Big) \leq \eta \Big(1 + \alpha + \frac{\beta^2}{\delta_1}\Big)
\end{align*}
for some $\eta = \eta(\xi, c_0)$, and hence $(\wt C_*, \wt R_*)$ are Lipschitz, and in particular, continuous. Thus, we have established that $(\wt C, \wt R) \in \cR$.
\end{proof}

\begin{prop}\label{prop_ctr}
    Let $c_0, c_1, \beta_0, \beta_1, \delta_\beta$ be as in Lemma \ref{exp_decay_cr} and $\beta_2$ be as in Proposition \ref{contraction}. Then, there exists a constant $\beta_* > 0$ such that for all $\beta \leq \beta_*$, $\Xi$ is a contraction on $\S(c_0, \delta_1)$ with respect to the norm $\|\cdot\|_\infty$. Further, the unique solution of \eqref{hard_C}-\eqref{hard_R} is the unique fixed point of $\Xi$ in $\cR(c_0, \delta_1)$. 
\end{prop}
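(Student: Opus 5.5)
We estimate $\Xi(C_1,R_1)-\Xi(C_2,R_2)$ for $(C_i,R_i)\in\S\coloneqq\S(c_0,\delta_1)$, write $(\wt C_i,\wt R_i)=\Xi(C_i,R_i)$, and aim for
\begin{align*}
\|(\wt C_1-\wt C_2,\wt R_1-\wt R_2)\|_\infty\leq \eta\beta^2\|(C_1-C_2,R_1-R_2)\|_\infty .
\end{align*}
Taking $\beta_*\leq\beta_2$ small enough that $\eta\beta_*^2<1$ then gives the contraction.

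\textbf{Step 1 (the Lagrange multiplier).} From \eqref{hard_mu}, the exponential decay \eqref{exp_decay} and the Lipschitz bounds \eqref{nu_bds}, we get
\begin{align*}
|\mu_{C_1,R_1}(s)-\mu_{C_2,R_2}(s)|\leq\eta\beta^2\|\Delta\|_\infty\int_0^s e^{-\delta_1(s-u)/2}\,du\leq \frac{\eta\beta^2}{\delta_1}\|\Delta\|_\infty .
\end{align*}
Here we bound one factor of each product by the difference and the remaining factor by its decay.

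\textbf{Step 2 (the exponential factors).} In $\wt R_i$ and in the first term of \eqref{int_wt_c}, differences of the factors $e^{X_{C_i,R_i}(s)-X_{C_i,R_i}(t)}$ appear. By the mean value inequality and \eqref{lbd_mu},
\begin{align*}
\big|e^{-\int_t^s\mu_1}-e^{-\int_t^s\mu_2}\big|\leq (s-t)e^{-(s-t)/4}\sup_u|\mu_1(u)-\mu_2(u)|.
\end{align*}
The factor $(s-t)e^{-(s-t)/4}$ is uniformly bounded, so this term is again $O(\beta^2\|\Delta\|_\infty)$ uniformly in $s\geq t$.

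\textbf{Step 3 (difference of $\wt R$).} For $\wt R$ we additionally need $\wt H_1-\wt H_2$. We use Lemma \ref{lem:H} together with the appendix estimate \eqref{diff_w}, taking $\zeta=\eta\beta^2\|C_1-C_2\|$. The raw bound grows like $(s-t)^2e^{\eta'\beta(s-t)}\sup\|\Delta C\|$. We would absorb this growth using the decay factor $e^{-(s-t)/4}$ coming from $\mu\geq1/4$: since $\eta'\beta<1/8$ for small $\beta$, the product $(s-t)^2e^{-(1/4-\eta'\beta)(s-t)}$ is bounded uniformly. This gives $\sup_{s\geq t}\|\wt R_1-\wt R_2\|\leq\eta\beta^2\|\Delta\|_\infty$.

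\textbf{Step 4 (difference of $\wt C$).} For $\wt C$ we subtract the two versions of \eqref{int_wt_c}. Every inhomogeneous term carries $\beta^2$ and is bilinear or trilinear in $(C,R)$. Telescoping each product, we bound one factor by $\|\Delta\|_\infty$ and the others by their exponential decay. The integrals are then controlled as in the proof of Proposition \ref{contraction}: the inner integrals are $O(1/\delta_1)$ and $\int_t^s e^{-(s-v)/4}dv\leq4$. The differences of the exponential prefactors are handled as in Step 2. Since $\wt C(s,t)=\wt C(t,s)^T$, the supremum over all $s,t\geq0$ follows from the case $s\geq t$.

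\textbf{Main obstacle.} The hardest part is getting uniformity over the infinite time horizon, since the Gr\"onwall-type constants from Section \ref{sec4} grow with $T$. This is why each difference must be paired with an exponential decay factor. The point to watch is the tree expansion in Step 3, whose polynomial and $e^{\eta'\beta(s-t)}$ growth must be beaten by $e^{-(s-t)/4}$.

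\textbf{Fixed point and identification.} $\S$ is complete and $\Xi(\S)\subseteq\S$ by Proposition \ref{contraction}, so Banach's fixed point theorem gives a unique fixed point in $\S$. Since $\Xi$ maps $\cR$ into itself, we start the Picard iteration at $(C_0,R_0)\in\cR$, so all iterates lie in $\cR$. We would then show the fixed point lies in $\cR$, i.e.\ that the uniform limit of the iterates still has stationary limits. The argument: uniform convergence lets us exchange $\lim_t$ with the iteration limit, because the stationary limits of the iterates form a Cauchy sequence in sup norm.

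A fixed point of $\Xi$ solves \eqref{hard_C}--\eqref{hard_R} with \eqref{l2.4'}, since $\mu_{C,R}=\mu$. By the uniqueness on every $[0,T]$ from Theorem \ref{thm2} and the patching argument, it therefore coincides with the unique solution $(C,R)$. Conversely, Lemma \ref{exp_decay_cr} shows that $(C,R)\in\S(c_0,\delta_1)$, provided $\beta\leq\beta_1$ so that $b_\beta\leq c_0$ after shrinking $\beta$. Since $(C,R)$ is a fixed point in $\S$, uniqueness of the fixed point identifies it with the Banach fixed point, which lies in $\cR$.
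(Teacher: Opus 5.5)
Your proposal is correct and follows essentially the same route as the paper: a $\beta^2$-Lipschitz bound on $\mu_{C,R}$, the mean-value bound on $e^{X_1}-e^{X_2}$, the tree-difference estimate \eqref{diff_w} for $\wt H_1-\wt H_2$ with its growth absorbed by $e^{-(s-t)/4}$, a telescoped bound on \eqref{int_wt_c}, and then Banach's theorem on the closed set $\cR(c_0,\delta_1)$. The identification also matches: the paper uses Lemma \ref{exp_decay_cr} to place the solution in $\S(c_0,\delta_1)$ and then invokes uniqueness of the fixed point, and your extra appeal to uniqueness of the hard-spherical system is a harmless alternative.
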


\begin{proof}
We first show that for $\beta$ sufficiently small, the unique solution $(C,R)$ of the system \eqref{hard_C}-\eqref{hard_R} is in $\S(c_0,\delta_1)$. Since $(C,R)$ already satisfies \eqref{l2.4'}, it suffices to show that for $\beta$ sufficiently small, $(C,R)$ satisfies the desired exponential decay rates in \eqref{exp_decay}. Indeed, for \begin{align*}
    \beta \leq \beta_1 \wedge \beta_2 \wedge \Big( \frac{1}{\beta_0^2} + \frac{256 c_0^2 c_1}{3(c_0-1)}\Big)^{-1/2},
\end{align*}
we have that $b_\beta \leq c_0$, and thus $(C,R) \in \S(c_0, \delta_1)$. For the rest of the proof we will use $\eta > 0$ to denote constants which are dependent only on $\xi$. The value of this constant may change at each occurrence. We start with the following estimate. For $(C_1, R_1), (C_2, R_2) \in \S(c_0, \delta_1)$, we claim that \begin{align}\label{est_mu}
    \sup_{s\geq 0}|\mu_{C_1,R_1}(s) - \mu_{C_2,R_2}(s)| \leq \eta \beta^2 \|(C_1, R_1) - (C_2,R_2)\|_\infty.
\end{align}
To see this, we use the expression of $\mu_{C,R}$ in \eqref{hard_mu} to write the difference $\mu_{C_1,R_1}(s) - \mu_{C_2,R_2}(s)$ as a telescoping sum. Using \eqref{nu_bds} as necessary, it follows each term in the telescoping sum contains a factor of the form  $C_1(u,v) - C_2(u,v)$ or $R_1(u,v) - R_2(u,v)$, which can be bounded by $\|(C_1, R_1) - (C_2,R_2)\|_\infty$, and another factor containing $C_i(u,v)$ or $R_i(u,v)$, which decay exponentially in $u-v$. This yields \begin{align*}
    |\mu_{C_1,R_1}(s)-\mu_{C_2,R_2}(s)| &\leq \eta\beta^2\|(C_1, R_1) - (C_2,R_2)\|_\infty \int_0^s\Big(e^{-\delta_1(s-u)/2} +e^{-\delta_1(s-u)} +e^{-3\delta_1(s-u)/2} \Big)du\\
    &\leq \eta\beta^2\|(C_1, R_1) - (C_2,R_2)\|_\infty.
\end{align*}
For simplicity, denote $X_i(s,t)\coloneqq X_{C_i,R_i}(s) - X_{C_i, R_i}(t)$ for $i =1,2$. The uniform bound \eqref{lbd_mu}, together with the inequality $|e^{-x} - e^{-y}| \leq e^{-(x\wedge y)}|x-y|$ for $x, y \geq 0$ yields \begin{align}\label{est_x}
    \|e^{X_1(s,t)} - e^{X_2(s,t)}\| \leq \eta \beta^2 (s-t)e^{-(s-t)/4} \|(C_1, R_1) - (C_2,R_2)\|_\infty,
\end{align} 
where the integral in the definition of $X_{C,R}$ contributes to the linear term $s-t$ and the factor $e^{-(s-t)/4}$ arises from the bound \eqref{lbd_mu}.

Equipped with the above estimates, we show that for all $\beta$ sufficiently small \begin{align}
    \sup_{s\geq t}\|\wt C_1(s,t) - \wt C_2(s,t)\| \leq \frac{1}{3} \|(C_1, R_1) - (C_2,R_2)\|_\infty \label{est_c},\\
    \sup_{s\geq t}\|\wt R_1(s,t) - \wt R_2(s,t)\| \leq \frac{1}{3} \|(C_1, R_1) - (C_2,R_2)\|_\infty \label{est_r},
\end{align}
and hence $\|\Xi(C_1,R_1) - \Xi(C_2,R_2)\|_\infty < \|(C_1, R_1) - (C_2,R_2)\|_\infty$, establishing that $\Xi$ is a contraction. 

We start with the proof of \eqref{est_c}. Using \eqref{int_wt_c}, we write the difference $\wt C_1(s,t) - \wt C_2(s,t)$ as a telescopic sum. The differences of the non-linear terms appearing in the inner integral are controlled by \eqref{nu_bds} as in the derivation of \eqref{est_mu}, and the exponential term $e^{X_i(s,t)}$ can be upper bounded by $e^{-(s-t)/4}$ (see \eqref{eq::1} for details). Together with the estimate \eqref{est_x}, we arrive at \begin{align*}
    \|\wt C_1(s,t) - \wt C_2(s,t)\| & \leq \eta \beta^2  \|(C_1,R_1) -(C_2,R_2)\|_\infty \Big[(s-t) e^{-(s-t)/4} \\
    & \qquad + \int_t^s\Big(\beta^2(s-v) e^{-(s-v)/4 - \delta_1(v-t)/2} + e^{-(s-v)/4}\Big)dv\Big].
\end{align*}
The terms on the right-hand side above can be uniformly bounded for $s\geq t$ by reducing the $\beta$ as necessary, thereby establishing \eqref{est_c}. To prove \eqref{est_r}, we use the conjugation \eqref{conj} and the bounds \eqref{eqn:bdd_h}, \eqref{lbd_mu} and \eqref{est_x} to obtain \begin{align}\|\wt R_1(s,t) - \wt R_2(s,t)\| & \leq \|\wt H_1(s,t)\| \|e^{X_1(s,t)} - e^{X_2(s,t)}\| + \|e^{X_2(s,t)}\|\|\wt H_1(s,t) -\wt H_2(s,t)\|\notag\\
    & \leq \eta \beta^2 (s-t)e^{-(1/4 - \eta\beta)(s-t)} \|(C_1, R_1) - (C_2,R_2)\|_\infty + e^{-(s-t)/4}\|\wt H_1(s,t) -\wt H_2(s,t)\|, \label{est_int_r}
\end{align}
and so it suffices to obtain a bound on $\|\wt H_1(s,t) -\wt H_2(s,t)\|$. Let $\A_i$ denote the operator \eqref{defn_a} corresponding to $C_i$. From \eqref{nu_bds} we have \begin{align*}
    \|\A_i[M(u,v)]\| \leq \eta \|M(u, v)\|, \qquad \|\A_1 [M(u,v)] - \A_2[M(u,v)]\| \leq \eta \|(C_1, R_1) - (C_2,R_2)\|_\infty\|M(u,v)\|.
\end{align*}
Taking $\zeta \equiv \eta \|(C_1,R_1) - (C_2,R_2)\|_\infty$ we apply \eqref{diff_w} term by term to the binary-tree representations of $\wt H_1$ and $\wt H_2$. Summing the resulting estimates exactly as in the proof of Lemma~\ref{lem:H}, we obtain \begin{align*}
    \|\wt H_1(s,t) -\wt H_2(s,t)\| \leq \eta \beta^2(s-t)^2 e^{\eta\beta(s-t)}\|(C_1, R_1) - (C_2,R_2)\|_\infty,
\end{align*}
where the factor $(s-t)^2$ upper bounds the double integral appearing in \eqref{diff_w}. Thus, from \eqref{est_int_r} we get \begin{align*}
    \|\wt R_1(s,t) - \wt R_2(s,t)\| \leq \eta \beta^2 (s-t + (s-t)^2) e^{-(1/4 -\eta\beta)(s-t)}\|(C_1, R_1) - (C_2,R_2)\|_\infty
\end{align*}
and so by choosing $\beta$ sufficiently small we obtain a uniform upper bound of the right-hand side above, and hence establish \eqref{est_r}.

Note that $\cR(c_0,\delta_1)$ is closed with respect to the metric induced by the uniform norm $\|\cdot\|_\infty$, since a uniform limit of functions whose stationary limits are Cauchy as $t\to\infty$ again has a Cauchy stationary limit. Hence, $\cR(c_0,\delta_1)$ is complete as a closed subspace of $\S(c_0,\delta_1)$. Since $\Xi$ is a contraction on $\S(c_0,\delta_1)$, and therefore also on $\cR(c_0,\delta_1)$, its unique fixed point belongs to $\cR(c_0,\delta_1)$. By construction, the unique solution $(C,R)$ of \eqref{hard_C}--\eqref{hard_R} is a fixed point of $\Xi$. Moreover, for sufficiently small $\beta$, we have $(C,R)\in\S(c_0,\delta_1)$. The uniqueness of the fixed point of $\Xi$ therefore implies that $(C,R)\in\cR(c_0,\delta_1)$.
\end{proof}

\begin{proof}[Proof of Theorem \ref{thm3}]
    Let $c_0, \beta_*, \beta_1, \delta_\beta$ be as above. We consider the set $\cR_*(c_0, \delta_1)$ defined as set of pairs $(C,  R) \in (\cC(\R_+)^{2\times 2})^2$ such that $(C,  R)$ satisfies \eqref{l2.4''} and $\| R(t)\|\leq c_0 e^{-\delta_1 t}$ and $\| C(t)\| \leq c_0 e^{-\delta_1 t/2}$ for all $t \geq 0$. In the previous proposition we showed that the  solution $(C,R)$ of \eqref{hard_C}-\eqref{hard_R} is in $\cR(c_0, \delta_1)$. Thus, their stationary limits $(C_*, R_*) \in \cR_*(c_0, \delta_1)$. 

    The map $\Xi$ induces the map $\Xi_*$ on $\cR_*(c_0, \delta_1)$ which sends $(C,R)$ to $(\wt C, \wt R)$, where $(\wt C, \wt R)$ solves the following system: for $t\geq 0$, \begin{align*}
        \wt C'(t) & = (-\mu + \alpha \Theta)\wt C(t) + \beta^2 \int_0^\infty \Big(\xi''(C(u))\odot R(u)\Big)C(t-u)du + \beta^2 \int_t^\infty \xi'(C(u))R(u-t)^T du,\\
        \wt R'(t) & = -(\mu + \alpha \Theta) \wt R(t) +  \beta^2 \int_0^t \Big(\xi''(C(u))\odot \wt R(u)\Big)\wt R(t-u)du,
    \end{align*}
    with \begin{align*}
        \mu = \frac{1}{2} + \frac{\beta^2}{2} \int_0^\infty \tr\Big[\xi'(C(u))R(u)^T + \Big(\xi''(C(u))\odot R(u)\Big)C(u)^T\Big]du,
    \end{align*}
    and $(\wt C, \wt R)$ satisfying \eqref{l2.4''}. Using arguments similar to those presented for $\Xi$, we can show that the map $\Xi_*$ is well defined, is a contraction on $\cR_*(c_0, \delta_1)$ with respect to the metric induced by the supremum norm, and hence admits a unique fixed point in $\cR_*(c_0, \delta_1)$. By design, the stationary limit $(C_*,R_*)$ is a fixed point of $\Xi_*$, and by uniqueness of the fixed point, it follows that $(C_*, R_*)$ is the unique solution to \eqref{C*}-\eqref{R*}. The exponential decay of $(C_*,R_*)$ follows immediately form the definition of $\cR_*(c_0,\delta_1)$.
\end{proof}

\section{The spherical Sherrington-Kirkpatrick model}\label{sec7}

In this section, we restrict our attention to the spherical SK model where $\xi(r)=r^2/2$, and derive explicit formulas for both the coupled and decoupled systems which are valid at all temperatures. Our approach exploits the quadratic structure of the Hamiltonian, which allows us to pass to a co-rotating frame and thereby reduce the coupled, non-reciprocal system to a decoupled one. The analysis is thereby reduced to a scalar system for which explicit formulas are obtained. These formulas allow us to sharpen the results of \cite[Proposition 3.2]{Ben2}, where the critical temperature $\beta=1/2$ marking the transition between a stationary regime and an aging regime was established, and the behavior of the correlation function analyzed for all temperatures. We then exploit the correspondence between the coupled and decoupled systems induced by the rotation to transfer this characterization back to the original model. In this way, we rigorously establish the corresponding physical behavior in the non-reciprocal system and verify the predictions of \cite{Lor}, for example, the aging relation stated in their equation (7).
    
    We write down the system \cite[(1.14)-(1.15)]{Dem} corresponding to the quadratic case $\xi(x) = x^2/2$: for all $s\geq t$,
    \begin{align}
        \partial_sc(s,t) & = - \mu(s) c(s,t)  + \beta^2\int_0^tc(s,u)r(t,u)du + \beta^2\int_0^sr(s,u)c(u,t)du,\label{c_sc}\\
    \partial_s r(s,t) & =  - \mu(s)r(s,t) +  \beta^2\int_t^sr(s,u)r(u,t) du, \label{r_sc}
    \end{align}
    where \begin{align*}
        \mu(s) = \frac{1}{2} + 2\beta^2\int_0^s c(s,u)r(s,u)du,
    \end{align*}
    with \begin{align*}
    \begin{split}
        r(s,t) & = 0 \text{ for } t > s, \\
        r(t,t) & = c(t,t) = 1 \text{ for all } t\geq 0,\\
        c(s,t) & = c(t,s) \text{ for } s \geq t \geq 0.
        \end{split}
    \end{align*}

    Let $(c(s,t), r(s,t))$ be the solution to this system, whose uniqueness is established in \cite[Proposition 1.1]{Dem} for all positive temperatures. A direct computation yields that $(c(s,t)e^{\alpha(s-t)\Theta}, r(s,t)e^{\alpha(s-t)\Theta})$ solves the system \eqref{hard_C}-\eqref{hard_R} and satisfies the conditions \eqref{l2.4'}. By uniqueness, this pair coincides with $(C,R)$ of Theorem \ref{thm2}. For the rest of this section, we focus on the scalar system \eqref{c_sc}-\eqref{r_sc}, since the properties of its solution that we shall establish below readily translate to those of $(C,R)$ of Theorem \ref{thm2} via the identification above. 
    
    For $t\geq 0$, let $\ell(t)$ be the moment generating function of the semi-circular law on $[-2\beta,2\beta]$, i.e., the distribution whose density is given by $f_{\rm sc}(x) = \1_{[-2\beta,2\beta]}(x) \sqrt{4\beta^2-x^2}/(2\pi\beta^2)$. It is well known that the (even) moments of the semi-circular law are the Catalan numbers (see \cite[Section 2.1.1]{And}) and hence we have the representation \begin{align*}
        \ell(t) = \sum_{n=0}^\infty \frac{1}{n+1}\binom{2n}{n} \frac{(\beta t)^{2n}}{(2n)!}, \qquad t\geq 0.
    \end{align*}For $t < 0$, we set $\ell(t) = 0$. Using the upper bound of $4^n$ on the $n$-th Catalan number, it is easy to show that $\ell(t) \leq e^{2\beta t}$. Further, from the recursive relation characterizing Catalan numbers, we obtain by a straightforward computation that \begin{align}\label{bes_ker}
        \ell'(t) = \beta^2 \int_0^t \ell(t-u)\ell(u)du, \qquad t\geq 0.
    \end{align}
    
    For two functions $f, g$, we denote their convolution as $
        (f * g)(t) = \int_0^t f(t-u) g(u)du$.  For $n\geq 1$, we define the $n$-fold convolution inductively: $f^{*1} \equiv f$, and $f^{*(n+1)} \equiv f^{*n}*f$. 
    Define \begin{align}\label{def_ho}
        h(t )= \ell(2t) \qquad \text{and} \qquad \Omega(t) = \sum_{n=1}^\infty h^{*n}(t).    \end{align}
    Note that both $h$ and $\Omega$ are positive functions.

    Our first result provides an explicit representation of $(c(s,t),r(s,t))$, which is the spherical counterpart of the soft-sphere correlation representation in \cite[(3.5)]{Ben2}. Our approach is constructive: motivated by the representation in \cite[(3.5)]{Ben2}, we explicitly construct a candidate solution to the limiting equations and verify directly that it satisfies the system. Since Theorem \ref{thm2} already guarantees uniqueness, this verification identifies the constructed solution with the limiting correlation and response functions.
    
    \begin{prop}[Explicit formula for $(c,r)$]\label{exp_rep}
    For every $\beta > 0$, 
        $\Omega$ defined by the series \eqref{def_ho} is the unique solution to the convolution equation
        \begin{align}\label{renew}
        \Omega(t) = h(t) + (\Omega * h)(t) \quad \text{for }t\geq 0, \qquad \Omega(0) = 1,
    \end{align}
    and admits the representation \begin{align}\label{exp_o}
        \Omega(t) = e^{(1+4\beta^2)t}\Big(1- 4\beta^2 \int_0^t e^{-(1+4\beta^2)u} \ell(2u)du\Big), \qquad t\geq 0.
    \end{align}
        Further, for $s\geq  t \geq 0$, 
    \begin{align}\label{exp_cr}
        c(s,t) = c(t,s) = \frac{1}{\sqrt{\Omega(s)\Omega(t)}}\Big( \ell(s+t) + \int_0^t \Omega(u)\ell(s+t-2u)du\Big), \qquad r(s,t) = \sqrt{\frac{\Omega(t)}{\Omega(s)}}\ell(s-t).
    \end{align}
    \end{prop}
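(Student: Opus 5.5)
The plan has three parts. First handle $\Omega$ alone. Then show that the proposed $r$ and $c$ solve \eqref{c_sc}--\eqref{r_sc} with the multiplier $\mu(s)=\Omega'(s)/(2\Omega(s))$. Finally, identify this $\mu$ with the one in the system and conclude by the uniqueness of \cite[Proposition 1.1]{Dem}, as already used above to identify $(c,r)$ with $(C,R)$.

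\emph{Step 1: the renewal function.} Since $0\le h(t)\le e^{4\beta t}$, induction gives
\begin{align*}
h^{*n}(t)\le e^{4\beta t}\,\frac{t^{n-1}}{(n-1)!},
\end{align*}
so the series \eqref{def_ho} converges locally uniformly. Its sum satisfies $\Omega=h+\Omega*h$, with $\Omega(0)=h(0)=\ell(0)=1$. Uniqueness for this linear Volterra equation follows from a Gr\"onwall argument, as in Lemma \ref{lem:H}.

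For \eqref{exp_o}, substitute $u=2v$ in \eqref{bes_ker} to obtain $h'=4\beta^2\,h*h$. Differentiating the renewal equation then gives
\begin{align*}
\Omega' = h' + \Omega\,h(0) + \Omega*h' = 4\beta^2\big(h*h+\Omega*h*h\big)+\Omega = 4\beta^2\,\Omega*h+\Omega = (1+4\beta^2)\Omega-4\beta^2 h,
\end{align*}
where the third equality uses the renewal equation convolved with $h$. Solving this linear ODE with $\Omega(0)=1$ yields \eqref{exp_o}.

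\emph{Step 2: the response.} Set $m(s)\coloneqq\Omega'(s)/(2\Omega(s))$, so that $r(s,t)=\ell(s-t)\exp(-\int_t^s m)$. The exponential factors are multiplicative across $u$, since $e^{-\int_u^s m}e^{-\int_t^u m}=e^{-\int_t^s m}$. Therefore \eqref{r_sc} with $\mu=m$ reduces exactly to $\ell'(s-t)=\beta^2\int_t^s\ell(s-u)\ell(u-t)\,du$, which is \eqref{bes_ker}. We also have $r(t,t)=\ell(0)=1$.

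\emph{Step 3: the correlation.} Let
\begin{align*}
g(s,t)\coloneqq\sqrt{\Omega(s)\Omega(t)}\,c(s,t)=\ell(s+t)+\int_0^t\Omega(u)\ell(s+t-2u)\,du, \qquad s\ge t .
\end{align*}
On the diagonal, $g(t,t)=h(t)+(\Omega*h)(t)=\Omega(t)$ by \eqref{renew}, so $c(t,t)=1$. Symmetry holds by the stated convention $c(s,t)=c(t,s)$. With $\mu=m$, equation \eqref{c_sc} becomes an identity for $g$:
\begin{align*}
\partial_s g(s,t)=\beta^2\int_0^t g(s,u)\,\ell(t-u)\,du+\beta^2\int_0^s \ell(s-u)\,g(u,t)\,du ,
\end{align*}
where $g(u,t)$ is read symmetrically for $u<t$. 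This is proved by expanding $\partial_s g$ with \eqref{bes_ker} and rearranging the resulting double integrals with Fubini. The renewal equation \eqref{renew} is needed to absorb the terms produced when $u$ crosses $t$ in the second integral.

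Finally, $\mu$ must agree with the prescribed $\frac12+2\beta^2\int_0^s c\,r$. Here we use the constraint $c(s,s)\equiv1$. Differentiating $c(s,s)$ with \eqref{c_sc} and its symmetric counterpart (the analogue of the trace identity \eqref{l2.2} with $K'=0$) shows that any solution with unit diagonal must have $\mu(s)=\frac12+2\beta^2\int_0^s c(s,u)r(s,u)\,du$. For $(c,r)$ built from $g$ this holds automatically, because $\frac{d}{ds}c(s,s)=0$. Alternatively, it can be checked directly from $\Omega'=(1+4\beta^2)\Omega-4\beta^2h$.

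\emph{Main obstacle.} I expect the main difficulty to be the verification of the $g$-identity in Step 3, in particular the bookkeeping for the region $u\in[t,s]$ where the roles of the arguments of $c$ flip. My first attempt would be to compare Laplace transforms in $s$ for fixed $t$. If that is unwieldy, I would instead use the series $\Omega=\sum_n h^{*n}$ and match terms order by order via \eqref{bes_ker}.
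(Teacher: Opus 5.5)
Your outline follows the paper's proof closely. The shared steps are: the series bound and Gr\"onwall for $\Omega$; $h'=4\beta^2h*h$, which turns \eqref{renew} into $\Omega'=(1+4\beta^2)\Omega-4\beta^2h$; checking the candidate with $\mu=\Omega'/(2\Omega)$; and the appeal to \cite[Proposition 1.1]{Dem}. The one real divergence is how you identify $\mu$, and there your justification is not correct as written.

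Equations \eqref{c_sc}--\eqref{r_sc} only prescribe $\partial_s c(s,t)$ for $s\ge t$. They say nothing about $\partial_t c(s,t)$ for $t<s$, so the claim that ``any solution with unit diagonal must have'' the $\mu$-formula is false. Indeed, for any continuous $\mu$, solving \eqref{r_sc} and then \eqref{c_sc} with $c(t,t)=1$ imposed is a linear Volterra problem. In Lemma~\ref{lem}, the $I_2$ in \eqref{l2.2} came from the additional equation \eqref{l2.6}, i.e.\ from the kink $R(s,s)=I_2$ on the diagonal. Naively taking $\partial_t c=\partial_s c$ at $t=s$ by symmetry gives $\mu=2\beta^2\int_0^s cr$, without the $\frac12$.

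For your candidate the missing input is explicit and cheap. Differentiating the upper limit in $g$ gives $\partial_t g(s,t)=\partial_s g(s,t)+\Omega(t)\ell(s-t)$, so $\partial_t c-\partial_s c=1$ at $t=s$. Then $0=\frac{d}{ds}c(s,s)=2\,\partial_s c(s,t)|_{t=s}+1$, together with \eqref{c_sc} at $t=s$, yields $\mu=\frac12+2\beta^2\int_0^s c(s,u)r(s,u)\,du$. With this line added your route is valid and somewhat slicker than the paper's. The paper instead computes $2\beta^2\int_0^s cr$ directly by Tonelli, using \eqref{bes_ker} in the form $\int_0^{x/2}\ell(x-w)\ell(w)\,dw=\ell'(x)/(2\beta^2)$ and then $h'+\Omega*h'=\Omega'-\Omega$; this is essentially your ``alternative''.

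Step 3, which the paper omits, is easier than you expect. The $g$-identity holds for every continuous $\Omega$, without using \eqref{renew}:
\begin{itemize}
\item The $\ell(s+t)$-terms match by writing $\ell'(s+t)=\beta^2\int_0^{s+t}\ell(w)\ell(s+t-w)\,dw$, splitting at $w=s$ and reflecting the second piece.
\item For the $\Omega$-terms, after Fubini the contributions of $u<t$ and $u\ge t$ in the second integral combine into $\beta^2\int_0^t\Omega(v)\int_0^{s-v}\ell(w)\ell(s+t-2v-w)\,dw\,dv$.
\item The first integral gives the same expression with $\int_0^{t-v}$. These are exactly the two pieces of $\ell'(s+t-2v)$ split at $w=s-v$.
\end{itemize}
So the renewal equation enters only through $c(t,t)=1$ and, via the diagonal argument above, the formula for $\mu$.
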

    
    \begin{proof}
        Fix $T<\infty$ and let $\|h\|_T:=\sup_{0\leq t\leq T}h(t)$. Then, \begin{align*}
            h^{*n}(t)& =\int_{\substack{u_1,\ldots,u_{n-1}\geq 0\\ \sum_{i=1}^{n-1}u_i\leq t}}h(u_1)\cdots h(u_{n-1})h\Big(t - \sum_{i=1}^{n-1}u_i\Big)du_1\cdots du_{n-1} \leq  \frac{\|h\|_T^nt^{n-1}}{(n-1)!}
        \end{align*}
for all $0\leq t\leq T$, and hence the series defining $\Omega$ in \eqref{def_ho} converges uniformly on $[0,T]$. Associativity of convolution and local uniform convergence of the series representation gives \begin{align*}
    \Omega(t)=h(t)+\sum_{n=2}^{\infty}h^{*n}(t)=h(t)+(\Omega*h)(t),
\end{align*} and a Gr\"onwall argument establishes uniqueness. Global uniqueness is achieved by patching solutions corresponding to finite time intervals, since the choice of $T$ was arbitrary.

Changing variables in the kernel equation \eqref{bes_ker} we obtain \begin{align*}
    h'(t) = 4\beta^2 \int_0^t h(t-u)h(u)du,
\end{align*}
and differentiating the convolution equation \eqref{renew} we have \begin{align}\label{diff_o}
    \Omega'(t) = h'(t) + \Omega(t) + \Omega*h'(t) =(1+4\beta^2)\Omega(t) - 4\beta^2 h(t),
\end{align}
and \eqref{exp_o} follows from variation of constants and $\Omega(0) = h(0) = 1$.

We now show that the expressions \eqref{exp_cr} satisfy the scalar system \eqref{c_sc}-\eqref{r_sc}. Note that uniqueness of the solution to the system was already established in \cite[Proposition 1.1]{Dem}. First, we observe that the boundary and symmetry conditions of the system are satisfied by the expressions \eqref{exp_cr}, as follows from the definition of $\ell$ and the convolution identity \eqref{renew}. Now, using Tonelli's theorem and \eqref{renew}, we obtain \begin{align*}
     \mu(s) & = \frac{1}{2} + 2\beta^2\int_0^s c(s,u)r(s,u)du\\
     &  = \frac{1}{2} + \frac{2\beta^2}{\Omega(s)}\Big(\int_0^s \ell(s+u)\ell(s-u) du +\int_0^s \int_0^u \Omega(v)\ell(s+u-2v)\ell(s-u)dvdu\Big)\\
     & = \frac{1}{2} + \frac{\ell'(2s)}{\Omega(s)} +\frac{1}{\Omega(s)}\int_0^s\Omega(v)\ell'(2(s-v))dv = \frac{1}{2} + \frac{1}{2\Omega(s)}\Big(h'(s) + (\Omega*h')(s)\Big) = \frac{\Omega'(s)}{2\Omega(s)},
\end{align*}
where the last equality follows from \eqref{diff_o}. Using this expression for $\mu(s)$ together with the convolution identity \eqref{renew},the right-hand side of \eqref{r_sc} becomes
\begin{align*}
    -\mu(s)r(s,t) + \beta^2\int_t^s r(s,u)r(u,t)du = -\frac{\Omega'(s) \Omega^{1/2}(t)}{2\Omega^{3/2}(s)}\ell(s-t) + \sqrt{\frac{\Omega(t)}{\Omega(s)}}\ell'(s-t),
\end{align*}
and this can be verified to be equal to $\partial_sr(s,t)$ by differentiating the expression of $r(s,t)$ in \eqref{exp_cr}. The verification that the expressions in \eqref{exp_cr} also satisfy \eqref{c_sc} proceeds along similar lines; we omit the details.
    \end{proof}
    
We now show that the fluctuation-dissipation relation holds for $(c_*,r_*)$, the stationary limit of $(c(s,t),r(s,t))$, at sufficiently high temperatures, i.e., $r_*(t) = -2c'_*(t)$. In general, when $\xi$ is a polynomial of higher degree, the entrywise non-linearities introduced by $\xi'$ and $\xi''$ are not equivariant under rotation by the phase $e^{\alpha\Theta}$ introduced by the non-reciprocity $\alpha \neq 0$. Consequently, it is not clear whether a fluctuation-dissipation relation continues to hold in this more general setting.

\begin{cor}(Fluctuation-dissipation at high temperature)
    For every $\beta < 1/2$, the stationary limits \begin{align*}
        c_*(\tau) \coloneqq \lim_{t\to \infty} c(t+\tau, t), \qquad r_*(\tau)\coloneqq \lim_{t\to\infty} r(t+\tau, t), \qquad \tau \geq 0,
    \end{align*}
    exist and are given by \begin{align*}
        c_*(\tau) = e^{-(1+4\beta^2)\tau/2} \int_0^\infty e^{-(1+4\beta^2)u} \ell(\tau + 2u) du, \qquad r_*(\tau) = e^{-(1+4\beta^2)\tau/2} \ell(\tau).
    \end{align*}
    In particular, \begin{align*}
        c_*(\tau) \leq (2\beta-1)^{-2}e^{-(2\beta-1)^2\tau/2},
    \end{align*}
    and the fluctuation-dissipation relation $r_*(t) =-2c_*'(t)$ holds for $t\geq 0$.
\end{cor}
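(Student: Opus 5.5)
The plan is to work directly from the explicit representation in Proposition \ref{exp_rep}, writing $\lambda \coloneqq 1+4\beta^2$. The first step is to extract the large-$t$ asymptotics of $\Omega$ from \eqref{exp_o}. Since $\ell(2u)\leq e^{4\beta u}$ and $4\beta<1+4\beta^2$ whenever $\beta\neq 1/2$, the integral $I\coloneqq\int_0^\infty e^{-\lambda u}\ell(2u)\,du$ converges. I will compute it in closed form from the semicircle moment generating function: $I=\int f_{\rm sc}(x)\,(\lambda-2x)^{-1}\,dx$ is a Stieltjes transform evaluated at $\lambda/2$. This gives
\begin{align*}
I=\frac{\lambda-\sqrt{\lambda^2-16\beta^2}}{8\beta^2}=\frac{1+4\beta^2-|1-4\beta^2|}{8\beta^2}.
\end{align*}
For $\beta<1/2$ this equals $1$, so $1-4\beta^2 I = 1-4\beta^2>0$. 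Hence
\begin{align*}
\Omega(t)=e^{\lambda t}\Big((1-4\beta^2)+4\beta^2\int_t^\infty e^{-\lambda u}\ell(2u)\,du\Big),
\end{align*}
and therefore $\Omega(t)e^{-\lambda t}\to 1-4\beta^2$. The tail term decays like $e^{-(\lambda-4\beta)t}=e^{-(1-2\beta)^2 t}$.

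The second step is to substitute $s=t+\tau$ into \eqref{exp_cr}. For $r$, the ratio $\Omega(t)/\Omega(t+\tau)\to e^{-\lambda\tau}$, which gives $r_*(\tau)=e^{-\lambda\tau/2}\ell(\tau)$. For $c$, I write
\begin{align*}
c(t+\tau,t)=\frac{\ell(2t+\tau)+\int_0^t\Omega(u)\,\ell(2t+\tau-2u)\,du}{\sqrt{\Omega(t+\tau)\Omega(t)}}.
\end{align*}
The denominator is asymptotic to $(1-4\beta^2)e^{\lambda t}e^{\lambda\tau/2}$. In the numerator I substitute $v=t-u$, so the integrand becomes $\Omega(t-v)\ell(\tau+2v)$, and divide by $e^{\lambda t}$. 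The pointwise limit is then $(1-4\beta^2)e^{-\lambda v}\ell(\tau+2v)$. To justify dominated convergence, note that $\Omega(t-v)e^{-\lambda(t-v)}$ is bounded, since $\Omega(x)e^{-\lambda x}\leq 1$ by \eqref{exp_o}, and that $e^{-\lambda v}\ell(\tau+2v)\leq e^{2\beta\tau}e^{-(1-2\beta)^2 v}$ is integrable. The first term $\ell(2t+\tau)e^{-\lambda t}$ tends to $0$. The factors of $1-4\beta^2$ cancel, which yields the stated formula for $c_*$. The exponential bound then follows from
\begin{align*}
c_*(\tau)\leq e^{-\lambda\tau/2}e^{2\beta\tau}\int_0^\infty e^{-(1-2\beta)^2u}\,du=(1-2\beta)^{-2}e^{-(1-2\beta)^2\tau/2}.
\end{align*}

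The final step is the fluctuation-dissipation relation. I differentiate $c_*$ under the integral, which is justified by the same domination since $\ell'\leq 2\beta e^{2\beta t}$:
\begin{align*}
c_*'(\tau)=-\frac{\lambda}{2}c_*(\tau)+e^{-\lambda\tau/2}\int_0^\infty e^{-\lambda u}\ell'(\tau+2u)\,du.
\end{align*}
The key observation is that $\ell'(\tau+2u)=\tfrac12\partial_u\ell(\tau+2u)$, so I integrate by parts in $u$. The boundary term at infinity vanishes by the decay estimate, and the boundary term at $u=0$ gives $-\tfrac12\ell(\tau)$. The remaining integral is $\tfrac{\lambda}{2}\int_0^\infty e^{-\lambda u}\ell(\tau+2u)\,du$, and this exactly cancels the first term. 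Hence $c_*'(\tau)=-\tfrac12 e^{-\lambda\tau/2}\ell(\tau)=-\tfrac12 r_*(\tau)$.

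I expect the main obstacle to be the first step, namely evaluating $I=1$ exactly for $\beta<1/2$, which is what fixes the leading behavior of $\Omega$. I would establish this via the semicircle Stieltjes transform, checking the choice of branch at $\lambda/2>2\beta$. As an alternative, I could check it from the renewal equation \eqref{renew}: its Laplace transform gives $\hat\Omega=\hat h/(1-\hat h)$, and a growth rate of exactly $\lambda$ is consistent with \eqref{diff_o}.
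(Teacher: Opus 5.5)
Your proposal is correct and follows essentially the same route as the paper. The paper also writes $\Omega(t)=e^{(1+4\beta^2)t}b(t)$ and evaluates $\int_0^\infty e^{-(1+4\beta^2)u}\ell(2u)\,du=1$ by Fubini, which is exactly your semicircle Stieltjes-transform computation. It then uses $b(t)\to 1-4\beta^2$ and dominated convergence in \eqref{exp_cr} with $\ell(t)\le e^{2\beta t}$, as you do. Your integration-by-parts argument for $r_*=-2c_*'$ just spells out the step the paper calls a direct verification.
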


\begin{proof}
    For simplicity, denote \begin{align*}
        b(t) = 1- 4\beta^2 \int_0^t e^{-(1+4\beta^2)u} \ell(2u)du
    \end{align*}
    so that from \eqref{exp_o}, we have that $\Omega(t) = e^{(1+4\beta^2)t} b(t)$. A straightforward computation using Fubini's theorem yields that 
    \begin{align}\label{int_l}
        \int_0^\infty e^{-(1+4\beta^2)u} \ell(2u)du = 1, \qquad \beta \leq 1/2.
    \end{align}
    It then follows that $1-4\beta^2 \leq b(t) \leq 1$ for all $t\geq 0$ and $b(t) \to 1-4\beta^2$ as $t \to \infty$. Fixing $\tau \geq 0$ we have from \eqref{exp_cr} we have \begin{align*}
        r(t + \tau, t)  = e^{-(1+4\beta^2)\tau/2} \ell(\tau)\sqrt{\frac{b(t)}{b(t+\tau)}}\to e^{-(1+4\beta^2)\tau/2} \ell(\tau)\eqqcolon r_*(\tau),
    \end{align*}
    as $t\to \infty$. For the correlation function $c$, we change variables in the integral appearing in \eqref{exp_cr} to write \begin{align*}
        c(t+\tau, t) & = \frac{\ell(2t+\tau)}{\sqrt{\Omega(t+\tau)\Omega(t)}} + \int_0^t \frac{\Omega(t-u)\ell(\tau + 2u)}{\sqrt{\Omega(t+\tau)\Omega(t)}}du.
    \end{align*}
    Since $\ell(t) \leq e^{2\beta t}$, the first term above is bounded by $e^{-(1-2\beta)^2(t + \tau/2)}/(1-4\beta^2)$, which vanishes as $t\to\infty$. The same estimate provides an integrable dominating bound on the integrand, while the convergence of $b(t)$ guarantees the pointwise convergence of the integrand to $e^{-(1+4\beta^2)(u+\tau/2)}\ell(\tau + 2u)$. The dominated convergence theorem therefore yields \begin{align*}
        c(t+\tau, t) & \to e^{-(1+4\beta^2)\tau/2} \int_0^\infty e^{-(1+4\beta^2)u} \ell(\tau + 2u) du \eqqcolon c_*(\tau).
    \end{align*}
    The bound on $c_*(t)$ follows from the fact that $\ell(t) \leq e^{2\beta t}$, and the fluctuation-dissipation relation is directly verified from the expressions of $c_*$ and $r_*$ above.
\end{proof}

\begin{remark}
For the SK model, the fluctuation-dissipation relation $r_* = -2c_*'$ can also be deduced directly from \cite[Theorem 1.3]{Dem}, without first computing explicit formulas for $(c_*,r_*)$. Indeed, by Theorem \ref{thm3}, the stationary limits of the functions $(C,R)$, after appropriate phase-correction, satisfy the scalar system \eqref{c_sc}--\eqref{r_sc}, and \cite[Theorem 1.3]{Dem} then implies that $r_*(t)=-2c_*'(t)$. Our approach additionally yields explicit formulas for $(c_*,r_*)$ and applies throughout the full high-temperature regime considered here, whereas we potentially lose some of the high-temperature regime owing to the condition $\beta<\beta_1 \leq 1/2$ in \cite[Theorem 1.3]{Dem}. For the soft-spherical case, an exponential decay, all the way up to criticality, was established in \cite[Proposition 3.2]{Ben2}, however no fluctuation-dissipation relation was established there.
\end{remark}

The next result establishes the aging phenomenon at low temperatures. Since we have already eliminated the non-reciprocal term arising from $\alpha \neq 0$ by working with the scalar system \eqref{c_sc}--\eqref{r_sc}, the result may be viewed as a refinement of \cite[Proposition 3.2]{Ben2}. Moreover, through the identification of $(C,R)$ from Theorem \ref{thm2} with the solution of \eqref{c_sc}--\eqref{r_sc}, as discussed above, we recover the asymptotic relation \cite[(7)]{Lor} obtained via a non-rigorous DMFT treatment; indeed, the additional factor $\cos(\alpha(s-t))$ is reminiscent of the phase shift induced by $e^{\alpha(s-t)\Theta}$. Combined with the preceding result, this shows that, for the non-reciprocally coupled system with purely quadratic Hamiltonian, $\beta = 1/2$ separates an aging phase from a time-translation-invariant regime.

We shall use the following notation in the next results: for two non-negative functions $f, g$ we say that $f\sim g$ if $\lim_{t\to \infty} f(t)/g(t) = 1$.

\begin{cor}(Aging at low temperature) \label{aging}
For every $\beta >1/2$, under the joint limit $t \to \infty$ and $s-t \to \infty$,
\begin{align}\label{eq:aging}
    c(s,t)\sim \Big(1-\frac{1}{2\beta}\Big)\phi(s,t), \qquad \text{where} \qquad \phi(s,t)\coloneqq \Big(\frac{2\sqrt{st}}{s+t}\Big)^{3/2}.
\end{align}
\end{cor}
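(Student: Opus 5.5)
The plan is to start from the explicit representation \eqref{exp_cr} of Proposition \ref{exp_rep} and replace every ingredient by its large-time asymptotics. Three facts are needed: the asymptotics of $\ell$, the asymptotics of $\Omega$ in the regime $\beta>1/2$, and the asymptotics of the numerator $\ell(s+t)+\int_0^t\Omega(u)\ell(s+t-2u)\,du$. The constants should then combine to give exactly $1-\frac{1}{2\beta}$.

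\emph{Step 1 (asymptotics of $\ell$).} The moment generating function of the semicircle law on $[-2\beta,2\beta]$ is $\ell(t)=I_1(2\beta t)/(\beta t)$. From $I_1(x)\sim e^x/\sqrt{2\pi x}$ one gets $\ell(t)\sim e^{2\beta t}/\big(2\sqrt\pi(\beta t)^{3/2}\big)$. I also need its Laplace transform, $\int_0^\infty e^{-\lambda x}\ell(x)\,dx=G(\lambda)\coloneqq(\lambda-\sqrt{\lambda^2-4\beta^2})/(2\beta^2)$ for $\lambda\ge 2\beta$, which is the Stieltjes transform of the semicircle law.

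\emph{Step 2 (asymptotics of $\Omega$).} Write $\Omega(t)=e^{(1+4\beta^2)t}b(t)$ as in \eqref{exp_o}, with $b(t)=1-4\beta^2\int_0^te^{-(1+4\beta^2)u}\ell(2u)\,du$. Using Step 1 at $\lambda=(1+4\beta^2)/2$, and noting that $\sqrt{\lambda^2-4\beta^2}=(4\beta^2-1)/2$ when $\beta>1/2$, the full integral equals $1/(4\beta^2)$. Hence $b(\infty)=0$, in contrast with \eqref{int_l}, and
\[
b(t)=4\beta^2\int_t^\infty e^{-(1+4\beta^2)u}\ell(2u)\,du .
\]
Inserting the asymptotics of $\ell(2u)$ and integrating the tail, using that $(2\beta-1)^2>0$, gives $\Omega(t)\sim\kappa\,\ell(2t)$ with $\kappa=4\beta^2/(2\beta-1)^2$. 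In particular the denominator in \eqref{exp_cr} satisfies
\[
\sqrt{\Omega(s)\Omega(t)}\sim\kappa\,\frac{e^{2\beta(s+t)}}{2\sqrt\pi\,(2\beta)^{3/2}(st)^{3/4}} .
\]

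\emph{Step 3 (the numerator).} This is the main obstacle. Set $N(s,t)=e^{-2\beta(s+t)}\big(\ell(s+t)+\int_0^t\Omega(u)\ell(s+t-2u)\,du\big)$; the claim is $N(s,t)\sim M\,\big(2\sqrt\pi(\beta(s+t))^{3/2}\big)^{-1}$ with $M=1+\int_0^\infty\Omega(u)e^{-4\beta u}\,du$. I would split the integral at $u=t/2$.

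On $[0,t/2]$ one has $s+t-2u\ge (s+t)/2$. Hence $\ell(s+t-2u)e^{-2\beta(s+t-2u)}(s+t)^{3/2}$ is bounded and tends to $1/(2\sqrt\pi\beta^{3/2})$ pointwise. The weight $\Omega(u)e^{-4\beta u}\lesssim (1+u)^{-3/2}$ is integrable by Step 2, so dominated convergence yields the factor $M-1$. The standalone term $\ell(s+t)$ contributes the remaining $1$.

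On $[t/2,t]$ the integrand is bounded by a constant times $t^{-3/2}(s-t+2w)^{-3/2}$, where $w=t-u$. Its integral over $w\in[0,t/2]$ is at most $t^{-3/2}\min\{t(s-t)^{-3/2},(s-t)^{-1/2}\}$. Relative to $(s+t)^{-3/2}$ this is $o(1)$ in both regimes $s-t\gtrsim t$ and $s-t\ll t$, precisely because $t\to\infty$ and $s-t\to\infty$; this is where the joint limit is used. The delicate point is making these bounds uniform, which I would do using the two-sided bounds $\ell(x)\asymp e^{2\beta x}(1+x)^{-3/2}$ and $\Omega(u)\asymp e^{4\beta u}(1+u)^{-3/2}$.

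\emph{Step 4 (constants).} Dividing Step 3 by Step 2 gives
\[
c(s,t)\sim\frac{M}{\kappa}\cdot\frac{2^{3/2}(st)^{3/4}}{(s+t)^{3/2}}=\frac{M}{\kappa}\,\phi(s,t).
\]
It remains to compute $M$. Since $\Omega=\sum_n h^{*n}$, its Laplace transform is $\hat\Omega=\hat h/(1-\hat h)$. Evaluating at $4\beta$, where $\hat h(4\beta)=\tfrac12G(2\beta)=1/(2\beta)<1$, gives $M=1/(1-\hat h(4\beta))=2\beta/(2\beta-1)$. Therefore
\[
\frac{M}{\kappa}=\frac{2\beta}{2\beta-1}\cdot\frac{(2\beta-1)^2}{4\beta^2}=1-\frac{1}{2\beta},
\]
which is \eqref{eq:aging}.
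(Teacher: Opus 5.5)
Your proof is correct. Steps 1, 3 and 4 follow the paper closely. The paper also rescales to $\widetilde\ell(x)=e^{-2\beta x}\ell(x)$ and $\widetilde\Omega(u)=e^{-4\beta u}\Omega(u)$. It proves $\widetilde\ell(s+t)^{-1}\int_0^t\widetilde\Omega(u)\widetilde\ell(s+t-2u)\,du\to\int_0^\infty\widetilde\Omega$ in Lemma \ref{lem_a4}, splitting at a fixed $T$ and at $(s+t)/4$ instead of at $t/2$, and controlling the piece near $u=t$ through the tail $\int_{s-t}^\infty k$. It identifies $\int_0^\infty\widetilde\Omega=\tau/(1-\tau)$ with $\tau=\int_0^\infty\widetilde h=1/(2\beta)$, which is exactly your geometric-series computation of $M$.

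The real difference is Step 2. The paper obtains $\widetilde\Omega\sim\kappa\,\widetilde h$ and the bound $\widetilde\Omega(u)\lesssim(1+u)^{-3/2}$ from the renewal Lemma \ref{lem_a3}. That lemma treats $\widetilde h/\tau$ as a subexponential density and sums $\sum_n\widetilde h^{*n}$ using $p^{*n}\sim np$ together with Kesten's bound. You instead combine the closed form \eqref{exp_o} with the semicircle Stieltjes transform to show $b(\infty)=0$ for $\beta>1/2$. This gives the exact identity $\widetilde\Omega(t)=4\beta^2e^{(2\beta-1)^2t}\int_t^\infty e^{-(2\beta-1)^2u}\widetilde h(u)\,du$, and an elementary tail estimate then yields $\kappa=4\beta^2/(2\beta-1)^2$.

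Your route is more self-contained, since it needs no subexponential theory. It is also the exact $\beta>1/2$ analogue of the paper's own critical-temperature argument, where $b(\infty)=0$ gives $\widetilde\Omega(t)=\int_t^\infty\widetilde h$. The paper's route does not depend on the explicit formula \eqref{exp_o}. It would therefore apply to any renewal kernel with a $t^{-3/2}$ tail and total mass below one.
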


\begin{proof}
    Let
\begin{align}\label{func_wt}
\wt \ell(t)=e^{-2\beta t} \ell(t), \qquad \wt h(t) =e^{-4\beta t}h(t) = e^{-4\beta t} \ell(2t) \qquad \text{and} \qquad \wt\Omega(t)=e^{-4\beta t}\Omega(t).
\end{align}
By Laplace's method, or alternatively by using the connection between $\ell$ and modified Bessel functions \cite[(10.32.2)]{Nis} and its large-argument asymptotic expansion \cite[(10.40.1)]{Nis}, we obtain
\begin{align*}
\ell(t)\sim \frac{e^{2\beta t}}{2\sqrt{\pi}(\beta t)^{3/2}}, \qquad \text{or equivalently, }\qquad \wt \ell(t) \sim \frac{(\beta t)^{-3/2}}{2\sqrt{\pi}}. 
\end{align*}
This implies that for $s\geq t$, \begin{align}\label{eq:lt1}
    \frac{\ell(s+t)}{\phi(s,t)\sqrt{\ell(2s)\ell(2t)}} = \frac{\wt \ell(s+t)}{\phi(s,t)\sqrt{\wt \ell(2s)\wt \ell(2t)}}  \sim 1,
\end{align}
and \begin{align}\label{tail_h}
    \wt h(t)\sim \frac{1}{2^{5/2}\sqrt{\pi}(\beta t)^{3/2}}.
\end{align}
 Recalling that $\ell(t)$ is the moment generating function of the semi-circular law on $[-2\beta,2\beta]$, a change of variables followed by an application of Tonelli's theorem yields that $\int_0^\infty \wt h(t)dt=1/(2\beta)<1$, where the inequality follows since $\beta>1/2$. As can be seen by a direct computation, $(\wt h, \wt \Omega)$ satisfy the convolution identity \eqref{renew}, and hence taking $f = \wt h$ and $g = \wt \Omega $ in Lemma \ref{lem_a3} gives
\begin{align}
\frac{\Omega(t)}{\ell(2t)}=\frac{\wt\Omega(t)}{\wt h(t)}\sim\frac{4\beta^2}{(2\beta-1)^2},
\end{align}
 and further \begin{align*}
    \int_0^\infty \wt\Omega(t)dt=\frac{1}{2\beta-1}\qquad \text{and} \qquad \wt\Omega(t)\leq c'(1+t)^{-3/2}, \quad t \geq 0,
\end{align*}
for some $c'>0$. Now, taking $\wt \ell$ to be the function $k$ and $g$ to be $\wt \Omega$ in Lemma \ref{lem_a4} we have
\begin{align}\label{eq:lt3}
     \frac{1}{\ell(s+t)}\int_0^t \Omega(u)\ell(s+t-2u)du = \frac{1}{\wt \ell(s+t)}\int_0^t \wt \Omega(u)\wt \ell(s+t-2u)du \to \frac{1}{2\beta -1}
\end{align} as $t \to \infty$ and $s-t \to \infty$. Using \eqref{exp_cr} to write \begin{align*}
    \frac{c(s,t)}{\phi(s,t)} & = \frac{\ell(s+t)}{\phi(s,t)\sqrt{\ell(2s)\ell(2t)}}\Big(1 + \int_0^t \frac{\Omega(u)\ell(s+t-2u)}{\ell(s+t)}du\Big)\Big(\frac{\ell(2s)\ell(2t)}{\Omega(s)\Omega(t)}\Big)^{1/2},
\end{align*}
    \eqref{eq:aging} follows from \eqref{eq:lt1}-\eqref{eq:lt3}.
\end{proof}

We finally establish the behavior of the correlation function at criticality.

\begin{cor}[Behavior at critical temperature]
    At $\beta = 1/2$, \begin{align*}
        c(s,t) \sim \frac{2^{3/2}}{\sqrt{\pi}} \frac{s^{1/4}t^{3/4}}{(s+t)\sqrt{s-t}}
    \end{align*}
    in the joint limit $t \to \infty$ and $s-t \to \infty$.
\end{cor}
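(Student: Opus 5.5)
The plan is to work directly from the explicit representation \eqref{exp_cr} of Proposition \ref{exp_rep}, specialised to $\beta=1/2$. At criticality $1+4\beta^2=2$, so \eqref{exp_o} gives $\Omega(t)=e^{2t}b(t)$ with $b(t)=1-\int_0^t e^{-2u}\ell(2u)\,du$. By \eqref{int_l}, which holds at $\beta=1/2$, this becomes $b(t)=\int_t^\infty e^{-2u}\ell(2u)\,du=\int_t^\infty \wt\ell(2u)\,du$, where $\wt\ell(x)=e^{-x}\ell(x)$. From the asymptotic $\wt\ell(x)\sim (x/2)^{-3/2}/(2\sqrt\pi)=\sqrt2/(\sqrt\pi\, x^{3/2})$ used in the proof of Corollary \ref{aging}, we get $\wt\ell(2u)\sim u^{-3/2}/(2\sqrt\pi)$. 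Integrating this tail gives $b(t)\sim t^{-1/2}/\sqrt\pi$, and hence
\[
\sqrt{\Omega(s)\Omega(t)}\sim \frac{e^{s+t}(st)^{-1/4}}{\sqrt\pi}.
\]
Unlike the aging case, here the renewal structure is critical, since $\int_0^\infty\wt h=1$. The explicit tail formula for $b$ therefore replaces Lemma \ref{lem_a3}.

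Next, write the numerator of $c(s,t)$ in \eqref{exp_cr} as $e^{s+t}\big(\wt\ell(s+t)+J(s,t)\big)$, where
\[
J(s,t)=\int_0^t b(u)\,\wt\ell(s+t-2u)\,du .
\]
On the range of integration $s+t-2u\ge s-t\to\infty$, so $\wt\ell(s+t-2u)$ may be replaced by $\sqrt2\,\pi^{-1/2}(s+t-2u)^{-3/2}$ uniformly, with a ratio tending to $1$. I then rescale $u=tv$ and use $\sqrt{u}\,b(u)\to 1/\sqrt\pi$ as $u\to\infty$ together with $b\le 1$ near $u=0$. Dominated convergence in $v\in(0,1]$, with the dominating function $v^{-1/2}$ times the bounded ratio, then gives
\[
J\sim \frac{\sqrt2}{\pi}\int_0^t u^{-1/2}(s+t-2u)^{-3/2}\,du .
\]
This integral is explicit, because $\tfrac{d}{du}\big[u^{1/2}(A-2u)^{-1/2}\big]=\tfrac A2\,u^{-1/2}(A-2u)^{-3/2}$. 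With $A=s+t$ it equals $\frac{2}{s+t}\,t^{1/2}(s-t)^{-1/2}$.

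The boundary term is negligible: the ratio $\wt\ell(s+t)/J$ is of order $(s-t)^{1/2}(s+t)^{-1/2}t^{-1/2}\le t^{-1/2}\to0$. Combining the two asymptotics gives
\[
c(s,t)\sim \frac{\sqrt2}{\pi}\cdot\frac{2\sqrt t}{(s+t)\sqrt{s-t}}\cdot \sqrt\pi\,(st)^{1/4}
=\frac{2^{3/2}}{\sqrt\pi}\,\frac{s^{1/4}t^{3/4}}{(s+t)\sqrt{s-t}},
\]
which is the claimed formula.

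The main obstacle is the uniform justification in the rescaled integral, because the ratio $s/t$ is unrestricted in the joint limit. My first attempt would be to split the integral at $u=M$ for a fixed large $M$. The piece over $[0,M]$ is $O(M^{1/2}(s-t)^{-3/2})$, which is negligible compared with $J$. On $[M,t]$ the relative errors from the asymptotics of $b$ and of $\wt\ell$ are both at most $\varepsilon(M)$ uniformly in $s$, so this piece equals $(1+O(\varepsilon))$ times the explicit integral. Letting $M\to\infty$ then closes the argument without relying on any limiting shape of $s/t$.
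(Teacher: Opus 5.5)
Your proposal is correct and follows the paper's proof. You identify $b=\wt\Omega$ with the tail $\int_t^\infty \wt\ell(2u)\,du$ via \eqref{int_l} and integrate the $u^{-3/2}$ asymptotic to get $\wt\Omega(t)\sim t^{-1/2}/\sqrt\pi$. Your split-at-$M$ argument is the proof of Lemma~\ref{lem_a5}, which the paper applies with $k=\wt\ell$ and $g=\wt\Omega$, and all your constants check out.

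One estimate needs fixing, in exactly the non-uniform regime you flagged. Your bound $O(M^{1/2}(s-t)^{-3/2})$ for the $[0,M]$ piece is not $o(J)$ when $s-t=o(\sqrt t)$, because there $J\asymp (t(s-t))^{-1/2}$. Instead, as in the paper, use $s+t-2u\ge s+t-2M\ge (s+t)/2$ for $t$ large. This gives $O(M^{1/2}(s+t)^{-3/2})=O(\sqrt{M/t})\,J$, which is negligible. The same bound is needed when you drop $\int_0^M u^{-1/2}(s+t-2u)^{-3/2}\,du$ from the explicit integral.
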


\begin{proof}
    Since $\beta = 1/2$, the functions $\wt \ell, \wt h, \wt \Omega$ from \eqref{func_wt} are now given by\begin{align*}
    \wt \ell(t) = e^{-t}\ell(t), \qquad \wt h(t) = e^{-2t}\ell(2t) \qquad \text{and} \qquad \wt \Omega(t) = e^{-2t} \Omega(t).
\end{align*}
From \eqref{int_l}, we have that $\int_0^\infty \wt h(u)du = 1$, and thus, it follows from \eqref{exp_o} that $\wt \Omega(t) = \int_t^\infty \wt h(u)du.$ From \eqref{tail_h} and \cite[Theorem 2.45]{Fos} we have that \begin{align*}
    \wt \Omega(t)\sim t^{-1/2}/\sqrt{\pi}.
\end{align*}
Taking $ k= \wt \ell$ and $g = \wt \Omega$ in Lemma \ref{lem_a5} we have that \begin{align*}
    \Big(\wt \ell(s+t) + \int_0^t \wt \Omega(u)\wt \ell(s+t-2u)du \Big)\sim \frac{2\sqrt{2t}}{\pi(s+t)\sqrt{s-t}}
\end{align*}
as $t \to \infty$ and $s-t \to \infty$. On the other hand, we get \begin{align*}
    \sqrt{\wt \Omega(s) \wt\Omega(t)}\sim \frac{(st)^{-1/4}}{\sqrt{\pi}}.
\end{align*}
Combining the above displays in expression of $c(s,t)$ in \eqref{exp_cr} we have the desired result. 
\end{proof}

\appendix
\section{Appendix}

\subsection{Weight matrices on binary trees}
\begin{lemma}\label{lem_app}
Let $\B$ denote the set of finite rooted binary trees and $\omega:\B\to \cC(\Pi_T)^{2\times 2}$ be a family of weight matrices defined recursively as follows. For the trivial tree $\emptyset \in\B$, consisting of only the root vertex $\emptyset$, we set $\omega_\emptyset(x,y) = I_2$. For a tree $\T \in \B$  such that $\T_1,\T_2\in \B$ are the left and right children subtrees respectively, we define
\begin{align}
    \omega_{\T}(x,y)
    = \int_y^x \int_u^x
    \A[\omega_{\T_1}(v,u)]
    \omega_{\T_2}(u,y)dvdu, \tag{\ref*{wt_const}}
\end{align}
where $\A$ is a bounded linear operator on $\cC(\Pi_T)^{2\times 2}$, i.e., for any $M \in \cC(\Pi_T)^{2\times 2}$, \begin{align*}
    \|\A[M(x,y)]\| \leq \eta\|M(x,y)\| 
\end{align*}
for some constant $\eta> 0$. Then we have that \begin{align}\label{eqn:wt_estimate}
    \|\omega_\T(x,y)\| \leq \eta^{|\T|} \frac{(x-y)^{2|\T|}}{(2|\T|)!},
\end{align}
for every $\T \in \B$, where $|\T|$ denotes the number of internal (i.e., non-leaf) vertices of $\T$.

Suppose that $\wt \A$ is another bounded linear operator (with the same bound as $\A$) on $\cC(\Pi_T)^{2\times 2}$ and $(\wt \omega_T)_{T\in \B}$ is another family of weight matrices with $\wt \omega_\emptyset(x,y) = I_2$ and the other $\wt w_\T$ constructed as in \eqref{wt_const} using the operator $\wt \A$. Assume that there exists a non-negative integrable function $\zeta$ such that \begin{align}\label{diff_a}
    \|\A[M(x,y)] - \wt \A[M(x,y)]\| \leq \|M(x,y)\|\zeta(x,y) 
\end{align}
for every $M$ and $(x,y)\in \Pi_T$. Then, for all $\T$ with size at least one, \begin{align}\label{diff_w}
    \|\omega_\T (x,y) - \wt \omega_\T (x,y)\| \leq |\T|\eta^{|\T|-1}\frac{(x-y)^{2(|\T|-1)}}{(2(|\T|-1))!}\int_y^x\int_u^x \zeta(v,u)dvdu.
\end{align}
\end{lemma}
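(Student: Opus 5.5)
The proof will be by strong induction on $|\T|$, the number of internal vertices. Both estimates rest on one iterated-integral identity: for integers $a,b\geq 0$,
\begin{align*}
\int_y^x\int_u^x \frac{(v-u)^{2a}}{(2a)!}\frac{(u-y)^{2b}}{(2b)!}\,dv\,du = \int_y^x \frac{(x-u)^{2a+1}}{(2a+1)!}\frac{(u-y)^{2b}}{(2b)!}\,du = \frac{(x-y)^{2a+2b+2}}{(2a+2b+2)!}.
\end{align*}
The first equality uses $\int_u^x (v-u)^{2a}\,dv = (x-u)^{2a+1}/(2a+1)$. The second is the Beta-integral (convolution) identity. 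If $\T$ has children $\T_1,\T_2$, then $|\T| = |\T_1|+|\T_2|+1$.

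For \eqref{eqn:wt_estimate}, the base case is $\|\omega_\emptyset\| = \|I_2\| = 1$. In the inductive step, the recursion \eqref{wt_const}, submultiplicativity of the norm, and the bound $\|\A[M]\|\leq \eta\|M\|$ give
\begin{align*}
\|\omega_\T(x,y)\|\leq \eta^{|\T_1|+|\T_2|+1}\int_y^x\int_u^x \frac{(v-u)^{2|\T_1|}}{(2|\T_1|)!}\frac{(u-y)^{2|\T_2|}}{(2|\T_2|)!}\,dv\,du.
\end{align*}
The identity with $a=|\T_1|$, $b=|\T_2|$ turns the right-hand side into exactly $\eta^{|\T|}(x-y)^{2|\T|}/(2|\T|)!$.

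For \eqref{diff_w}, I will telescope:
\begin{align*}
\A[\omega_{\T_1}]\omega_{\T_2}-\wt\A[\wt\omega_{\T_1}]\wt\omega_{\T_2} = (\A-\wt\A)[\omega_{\T_1}]\,\omega_{\T_2} + \wt\A[\omega_{\T_1}-\wt\omega_{\T_1}]\,\omega_{\T_2} + \wt\A[\wt\omega_{\T_1}](\omega_{\T_2}-\wt\omega_{\T_2}).
\end{align*}
Write $Z(x,y)=\int_y^x\int_u^x\zeta(v,u)\,dv\,du$. Since $\zeta\geq 0$, $Z(v,u)\leq Z(x,y)$ whenever $y\leq u\leq v\leq x$, because the domain of integration shrinks.

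\emph{First term.} By \eqref{diff_a} and \eqref{eqn:wt_estimate}, its contribution is bounded by
\begin{align*}
\eta^{|\T|-1}\int_y^x\int_u^x \zeta(v,u)\frac{(v-u)^{2|\T_1|}(u-y)^{2|\T_2|}}{(2|\T_1|)!(2|\T_2|)!}\,dv\,du \leq \eta^{|\T|-1}\frac{(x-y)^{2(|\T|-1)}}{(2|\T_1|)!(2|\T_2|)!}Z(x,y).
\end{align*}
The factor $1/((2|\T_1|)!(2|\T_2|)!)$ is larger than the target $1/(2(|\T|-1))!$. So the polynomial factors cannot simply be bounded by $(x-y)$-powers; this term must be handled more carefully.

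\emph{Other two terms.} These use the induction hypothesis on $\T_1$ or $\T_2$, together with \eqref{eqn:wt_estimate} for the other factor. Pulling out $Z$ by monotonicity, one evaluates the remaining polynomial integral with the identity. In each case the polynomial part collapses to at most $(x-y)^{2(|\T|-1)}/(2(|\T|-1))!$, using $a+b+1=|\T|-1$ when one index is lowered by one. These terms carry prefactors $|\T_1|$ and $|\T_2|$.

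\textbf{Main obstacle.} The delicate step is the first term, where $\zeta$ sits at the innermost variables and cannot be pulled out at the same cost. To fix it I would strengthen the induction hypothesis to an integral form. Namely, bound $\|\omega_\T-\wt\omega_\T\|(x,y)$ by $|\T|\eta^{|\T|-1}$ times
\begin{align*}
\sup \int_y^x\int_u^x \zeta(v,u)\,P(x,y,u,v)\,dv\,du,
\end{align*}
where $P$ is a nonnegative polynomial kernel whose total mass satisfies
\begin{align*}
P\leq \frac{(x-y)^{2(|\T|-1)}}{(2(|\T|-1))!}.
\end{align*}
The crude pointwise bound $(v-u)^{2a}(u-y)^{2b}\leq (x-y)^{2a+2b}$ costs factorials. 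If needed, I would accept a slightly weaker but sufficient bound, for instance replacing $(2(|\T|-1))!$ by $(|\T|-1)!$ or a Catalan-compatible constant. That still makes the series in Lemma \ref{lem:H} summable.

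The bookkeeping $1+|\T_1|+|\T_2|=|\T|$ then yields the stated prefactor $|\T|$. The base case $|\T|=1$ is immediate: $\omega_\T-\wt\omega_\T=\int\int(\A-\wt\A)[I_2]$, which is bounded by $Z(x,y)$.
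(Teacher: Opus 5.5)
Your proof of \eqref{eqn:wt_estimate} is correct, and so is your handling of the second and third telescoping terms. Pulling out $Z(v,u)\le Z(x,y)$ (resp.\ $Z(u,y)\le Z(x,y)$) and applying your iterated-integral identity gives contributions $|\T_1|\,\eta^{|\T|-1}\frac{(x-y)^{2(|\T|-1)}}{(2(|\T|-1))!}Z(x,y)$ and $|\T_2|\,\eta^{|\T|-1}\frac{(x-y)^{2(|\T|-1)}}{(2(|\T|-1))!}Z(x,y)$.

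The gap is the first term, which you leave unresolved. The ``strengthened induction hypothesis'' is never made concrete: no kernel $P$ is identified. Moreover, the condition you impose on $P$ is exactly the pointwise bound you believe fails for the first term. Your fallback, replacing $(2(|\T|-1))!$ by $(|\T|-1)!$, would at best prove a weaker statement than \eqref{diff_w}. As written, the proposal does not establish the lemma.

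The missing idea is elementary. For $p,q\ge 0$ and integers $n,m\ge 0$, the binomial theorem gives $\binom{n+m}{n}p^nq^m\le (p+q)^{n+m}$, that is,
\[
\frac{p^{n}\,q^{m}}{n!\,m!}\le \frac{(p+q)^{n+m}}{(n+m)!}.
\]
Take $p=v-u$, $q=u-y$ (so $p+q=v-y\le x-y$), $n=2|\T_1|$, $m=2|\T_2|$. This replaces your crude bound $(v-u)^{2a}(u-y)^{2b}\le (x-y)^{2a+2b}$ with one that keeps exactly the binomial coefficient you were losing. The first term is then at most $\eta^{|\T|-1}\frac{(x-y)^{2(|\T|-1)}}{(2(|\T|-1))!}Z(x,y)$. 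Adding the three contributions gives the prefactor $1+|\T_1|+|\T_2|=|\T|$, which is \eqref{diff_w}.

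The paper uses the three-variable (multinomial) version of the same inequality, but only at the very end. It first tracks the position of $\zeta$ through auxiliary quantities $\xi_{\T,z}$, one per internal vertex $z$. A second induction then proves the explicit representation \eqref{est_rho} with kernel $(x-v)^{n_1}(v-u)^{n_2}(u-y)^{n_3}/(n_1!\,n_2!\,n_3!)$, and only then is that kernel bounded. Once you add the binomial inequality, your monotonicity trick for $Z$ makes this bookkeeping unnecessary, so your route becomes a shorter proof of the same estimate.
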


\begin{proof}
    The proof of \eqref{eqn:wt_estimate} essentially proceeds by induction. The base case $\T = \emptyset$ is easily checked since $|\T| = 0$. Now suppose that $\T$ is a tree with $\T_1$ and $\T_2$ as the left and right subtrees. We have \begin{align*}
    \|\omega_\T(x,y) \| & \leq \int_y^x \int_u^x \|\A[\omega_{\T_1}(v,u)]\|\|\omega_{\T_2}(u,y)\|dvdu \leq \eta\int_y^x \int_u^x\|\omega_{\T_1}(v,u)\|\|\omega_{\T_2}(u,y)\|dvdu,
\end{align*}
where in the last step we used the boundedness of $\A$. Plugging in the estimate \eqref{eqn:wt_estimate} for $\omega_{\T_1}$ and $\omega_{\T_2}$, we have \begin{align*}
    \|\omega_\T(x,y)\| & \leq \frac{\eta^{|\T_1| + |\T_2| + 1}}{(2|\T_1|)! (2|\T_2|)!} \int_y^x \int_u^x (v-u)^{2|\T_1|}(u-y)^{2|\T_2|}dvdu =\eta^{|\T_1| + |\T_2| + 1} \frac{(x-y)^{2(|\T_1| + |\T_2| + 1)}}{(2(|\T_1| + |\T_2| + 1))!},
\end{align*}
and we complete the induction by observing that $|\T| = |\T_1| + |\T_2| + 1$. 

To prove the second part of the lemma, we need some notation. For a tree $\T \in \B\setminus\{\emptyset\}$, let $\T^\circ$ denote the set of internal vertices of the tree. Consider the following quantities recursively defined for $\T \in \B\setminus\{\emptyset\}$ and $z \in \T^\circ$ on $(x,y) \in \Pi_T$. If $\T$ has root $\emptyset$ and left and right subtrees $\T_1$ and $\T_2 \in \B$, then define \begin{align}\label{defn_rho}
    \xi_{\T, z}(x,y) & = \begin{cases}
    \frac{1}{(2|\T_1|)! (2|\T_2|)!}\int_y^x \int_u^x (v-u)^{2|\T_1|}(u-y)^{2|\T_2|}\zeta(v,u) dvdu, & z = \emptyset,\\
    \frac{1}{(2|\T_2|)!}\int_y^x \int_u^x \xi_{\T_1,z}(v,u)(u-y)^{2|\T_2|}dvdu, & z\in \T_1,\\
    \frac{1}{(2|\T_1|)!}\int_y^x \int_u^x (v-u)^{2|\T_1|}\xi_{\T_2,z}(u,y) dvdu, & z\in \T_2.
    \end{cases}
\end{align} 

We shall first prove by induction that for every $\T \in \B\setminus\{\emptyset\}$, \begin{align}\label{w_ind_1}
    \|\Delta \omega_\T(x,y)\| \leq \eta^{|\T|-1}\sum_{z\in \T^\circ}\xi_{\T, z}(x,y),
\end{align}
where $\Delta\omega_\T(x,y) = \omega_\T (x,y) - \wt \omega_\T (x,y)$. We adopt this notation for the differences of other matrices and operators appearing later in the proof.

The base case is easily checked; when $\T$ is the binary tree where both children of the root $\emptyset$ are leaves, we have from \eqref{diff_a} that \begin{align*}
    \|\Delta \omega_\T (x,y)\| \leq \int_y^x\int_u^x \|\A[I_2] - \wt A[I_2]\|dvdu \leq  \int_y^x\int_u^x\zeta(v,u)dvdu = \xi_{\T, \emptyset}(x,y).
\end{align*}
Now suppose that $\T$ has left and right children subtrees as $\T_1$ and $\T_2$ respectively. We write the difference in \eqref{w_ind_1} as a telescoping sum \begin{align*}
     \Delta \omega_\T(x,y) & = \int_y^x \int_u^x \Delta \Big(\A[\omega_{\T_1}(v,u)]\omega_{\T_2}(u,y)\Big)dvdu\\
     & =\int_y^x \int_u^x\Big(\A [\omega_{\T_1}(v,u)] - \wt A[ \omega_{\T_1}(v,u)]\Big)\omega_{\T_2}(u,y)dvdu+ \int_y^x \int_u^x\wt \A [\Delta \omega_{\T_1}(v,u)]\omega_{\T_2}(u,y)dvdu\\
     & \qquad + \int_y^x \int_u^x\wt \A [\wt \omega_{\T_1}(v,u)]\Delta \omega_{\T_2}(u,y)dvdu.
 \end{align*}
 Using the boundedness of $\wt A$, \eqref{eqn:wt_estimate}, \eqref{diff_a} and the induction hypothesis \eqref{w_ind_1} for $\T_1$ and $\T_2$ we have \begin{align*}
    \|\Delta \omega_\T(x,y)\|&  \leq \frac{\eta^{|\T_1| + |\T_2|}}{(2|\T_1|)!(2|T_2|)!}\int_y^x \int_u^x (v-u)^{2|\T_1|} (u-y)^{2|\T_2|}\zeta(v,u)dvdu\\
    & \qquad +\frac{\eta^{|\T_1| + |\T_2|}}{(2|T_2|)!} \sum_{z\in \T_1^\circ} \int_y^x \int_u^x \xi_{\T_1,z}(v,u) (u-y)^{2|\T_2|}dvdu\\
    & \qquad + \frac{\eta^{|\T_1| + |\T_2|}}{(2|T_1|)!} \sum_{z\in \T_2^\circ} \int_y^x \int_u^x (v-u)^{2|\T_1|}\xi_{\T_2,z}(u,y)dvdu.
    \end{align*}
    Noting that $|\T| = |\T_1| +|\T_2| + 1$, the induction is complete from the definition of $\xi_{\T, z}$.

To obtain \eqref{diff_w} from \eqref{w_ind_1}, it remains to control the quantities $\xi_{\T,z}$, which we do now. In particular, we show that for $\T \in \B \setminus\{\emptyset\}$ and $z\in \T^\circ$, there are integers $n_1, n_2, n_3$ depending on $\T$ and $z$ such that $n_1 + n_2 + n_3 = 2(|\T|-1)$ and \begin{align}\label{est_rho}
    \xi_{\T, z}(x,y) = \frac{1}{n_1! n_2! n_3!}\int_y^x \int_u^x(x-v)^{n_1}(v-u)^{n_2}(u-y)^{n_3}\zeta(v,u)dvdu.
\end{align}
The proof is again by induction. When $\T$ is the binary tree where both children of the root $\emptyset$ are leaves, we have \eqref{est_rho} holding true with $n_1 = n_2 = n_3 = 0$. If $\T$ is such that $\emptyset$ is the root and $\T_1$ and $\T_2$ are the left and right children subtrees respectively, there are few cases to consider. When $z =\emptyset$, it follows immediately from \eqref{defn_rho} that $n_1 = 0$, $n_2 = 2|\T_1|$ and $n_3 = 2|\T_2|$. When $z \in \T_1$, we have that for some integers $n_1, n_2, n_3$ with $n_1 + n_2 + n_3 = 2(|\T_1|-1)$, $\xi_{\T_1, z}(x,y)$ is given by \eqref{est_rho}. Using this in \eqref{defn_rho} we have by Fubini's theorem \begin{align*}
    \xi_{\T,z}(x,y) & = \frac{1}{(2|\T_2|)!n_1! n_2! n_3!}\int_y^x \int_u^x \int_u^v \int_w^v (v-r)^{n_1}(r - w)^{n_2}(w-u)^{n_3}(u-y)^{2|\T_2|}\zeta(r,w)drdwdvdu\\
    & = \frac{1}{(2|\T_2|)!n_1! n_2! n_3!} \int_y^x \int_w^x \zeta(r,w)(r-w)^{n_2} \Big[\int_y^w(w-u)^{n_3}(u-y)^{2|\T_2|} du\Big]\Big[\int_r^x(v-r)^{n_1}dv\Big]drdw\\
    & = \frac{1}{(n_1+1)! n_2! (2|\T_2|+ n_3+1)!} \int_y^x \int_w^x (x-r)^{n_1 + 1}(r-w)^{n_2}(w-y)^{2|\T_2| + n_3 + 1}\zeta(r,w)drdw
\end{align*}
and the induction is complete once we observe that the sum of the exponents in the last integral above equals $2(|\T|-1)$. The case $z \in \T_2$ is covered by a similar argument.

With \eqref{est_rho} in hand, the observation that for any $n_1, n_2, n_3 \geq 0$ and $y\leq u\leq v\leq x$, \begin{align*}
    \frac{(x-v)^{n_1}(v-u)^{n_2}(u-y)^{n_3}}{n_1!n_2! n_3!} \leq \frac{(x-y)^{n_1 + n_2 + n_3}}{(n_1+n_2+n_3)!}  
\end{align*}
immediately yields the upper bound \begin{align*}
    \xi_{\T,z}(x,y)\leq \frac{(x-y)^{2(|\T|-1)}}{(2(|\T|-1))!}\int_y^x\int_u^x \zeta(v,u)dvdu,
\end{align*}
and \eqref{diff_w} follows immediately from \eqref{w_ind_1}.
\end{proof}

\subsection{Some technical lemmas}

\begin{claim}\label{claim_a1}
    If $f$ satisfies \eqref{cond}, then for every $\rho_0 > 0$ there exists $\eta = \eta(\rho_0)< \infty$ such that for all $u, t\geq 0$, \begin{align*}
        2tf'(t) - u\sum_{p=1}^m\frac{|a_p|(2t)^{p/2}}{(p-1)!} - 1 \geq \rho_0 t - \eta(1+u)^{m/(2\delta) + 1}. 
    \end{align*}
\end{claim}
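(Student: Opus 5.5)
We argue by splitting according to the size of $t$, using the second half of \eqref{cond} to absorb everything into the confining term and Young's inequality to isolate the dependence on $u$. From \eqref{cond} there is a constant $c_f<\infty$ with $f'(t)\geq \rho t^{m/2+\delta-1}-c_f$ for all $t\geq 0$. Hence
\begin{align*}
2tf'(t)\geq 2\rho t^{m/2+\delta}-2c_f t.
\end{align*}
It therefore suffices to find $\eta$ with
\begin{align*}
2\rho t^{m/2+\delta}\geq (\rho_0+2c_f)t+1+u\sum_{p=1}^m\frac{|a_p|(2t)^{p/2}}{(p-1)!}-\eta(1+u)^{m/(2\delta)+1}.
\end{align*}

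First we treat the $u$-independent terms. Since $m/2+\delta>1$, we have $(\rho_0+2c_f)t+1\leq \rho t^{m/2+\delta}+\eta_1$ for some $\eta_1=\eta_1(\rho_0)$, because the left side is linear and is dominated by the superlinear power outside a compact set.

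Next we handle the $u$-dependent sum. For $p\leq m$ we have $(2t)^{p/2}\leq 2^{m/2}(1+t)^{m/2}$, so the whole sum is at most $c\,u(1+t)^{m/2}$ with $c$ depending only on $(a_p)$ and $m$. We apply Young's inequality with exponents $q=(m/2+\delta)/(m/2)$ and $q'=(m/2+\delta)/\delta$:
\begin{align*}
c\,u(1+t)^{m/2}\leq \frac{\rho}{2^{m/2+\delta}}(1+t)^{m/2+\delta}+C(cu)^{(m/2+\delta)/\delta}.
\end{align*}
Since $(1+t)^{m/2+\delta}\leq 2^{m/2+\delta-1}(1+t^{m/2+\delta})$, the first term is at most $\tfrac{\rho}{2}(1+t^{m/2+\delta})$, which is absorbed by the remaining $\rho t^{m/2+\delta}$. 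The second term equals $C'u^{m/(2\delta)+1}$, which is bounded by $C'(1+u)^{m/(2\delta)+1}$.

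Finally, collecting constants and using $(1+u)^{m/(2\delta)+1}\geq 1$ to absorb $\eta_1+\rho/2$, we obtain the claim with $\eta=\eta_1+\rho/2+C'$.

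The only delicate point is matching the Young exponents so that the $u$-power comes out exactly as $m/(2\delta)+1$. This works precisely because the growth exponent in \eqref{cond} exceeds $m/2$ by $\delta$, which is why that margin appears in the hypothesis.
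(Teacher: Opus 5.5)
Your proof is correct and follows essentially the same route as the paper. You lower-bound $2tf'(t)$ by $2\rho t^{m/2+\delta}-2c_f t$ via \eqref{cond}, use Young's inequality to split the $u$-dependent terms into a fraction of $t^{m/2+\delta}$ plus a multiple of $(1+u)^{m/(2\delta)+1}$, and absorb the linear terms into the superlinear power. The only cosmetic difference is that you first dominate every $(2t)^{p/2}$ by $2^{m/2}(1+t)^{m/2}$ and apply Young once with the extreme exponents, whereas the paper applies Young to each $p$ separately with $r_p=(m+2\delta)/p$ and then uses $r_p'\leq r_m'=m/(2\delta)+1$.
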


\begin{proof}
    Let $\iota = \inf_{t\geq 0}(f'(t) - \rho t^{m/2 + \delta  -1})$. Then, it follows that for any $t\geq 0$, we have \begin{align}\label{a1}
        2tf'(t) \geq 2\rho t^{m/2 + \delta} + 2\iota t.
    \end{align}
    For $1\leq p\leq m$, denote $\iota_p = |a_p|2^{p/2}/(p-1)!$ and $r_p = (m + 2\delta)/p > 1$. By Young's inequality we get that \begin{align}\label{a2}
         \iota_put^{p/2} \leq \frac{\rho}{m}t^{m/2 + \delta} + \frac{1}{r_p'}\Big(\frac{m}{\rho r_p}\Big)^{r_p'/r_p}(\iota_p u)^{r_p'}
    \end{align}
    where $r_p'$ is the H\"older conjugate of $r_p$. 
    
    Note that $r_p' \leq r_m' = m/(2\delta) + 1$, and hence for all $u \geq 0$, $u^{r_p'} \leq (1+ u)^{m/(2\delta) + 1}$. Thus, summing \eqref{a2} over $p = 1, \ldots, m$, we get that \begin{align*}
        u\sum_{p=1}^m \iota_pt^{p/2} \leq \rho t^{m/2 + \delta} + \eta(1+u)^{m/(2\delta) + 1}
    \end{align*}
    for some constant $\eta$. Combining this with \eqref{a1} we get that for all $\rho_0 > 0$\begin{align*}
        2tf'(t) - u \sum_{p=1}^m \iota_pt^{p/2}-1 & \geq \rho t^{m/2 + \delta} + 2\iota t - 1 - \eta(1+u)^{m/{(2\delta)+1}} \\
        & \geq \rho_0 t - \sup_{t\geq 0} \big(1 + (\rho_0-2\iota)t - \rho t^{m/2 + \delta}\big) - \eta(1+u)^{m/{(2\delta)+1}}\\
        & \geq \rho t - \eta(1+u)^{m/{(2\delta)+1}},
    \end{align*}
    where the last line follows by adjusting $\eta$ suitably.
\end{proof}

For the following lemmas, we recall the following notation: for functions $f, g$, we denote their convolution as $(f * g)(t) = \int_0^t f(t-u) g(u)du$.  For $n\geq 1$, we define the $n$-fold convolution inductively by $f^{*1} \equiv f$, and $f^{*(n+1)} \equiv f^{*n}*f$. If $f, g$ are non-negative and $\lim_{t\to \infty} f(t)/g(t) = 1$, we write $f\sim g$.

The following two lemmas have been used in the proof of the aging phenomena in the low-temperature regime for quadratic spin glasses.

\begin{lemma}\label{lem_a3}
Let $f:[0,\infty)\to [0,\infty)$ be a bounded function such that $f(t)\sim \eta t^{-3/2}$ for some $\eta>0$.  If $\tau \coloneqq \int_0^\infty f(t)dt < 1$, then the function  $g(t) = \sum_{n=1}^\infty f^{*n}(t)$ is the unique non-negative bounded solution to the convolution equation \begin{align}
    g(t) = f(t) + (f*g)(t), \qquad t\geq 0, \tag{\ref*{renew}}
\end{align} 
and further, $g$ satisfies 
\begin{align*}
    g(t) \sim \frac{f(t)}{(1-\tau)^2}, \qquad \int_0^\infty g(t)dt = \frac{\tau}{1-\tau}, \qquad \text{and} \qquad g(t) \leq c'(1+t)^{-3/2} \text{ for all } t\geq 0,
\end{align*}
for some constant $c'< \infty$.
\end{lemma}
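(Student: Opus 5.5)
The plan is to treat this as a subexponential renewal (Chover--Ney--Wainger / Kesten type) lemma for the defective renewal equation \eqref{renew}, and to proceed in four steps.

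\textbf{Step 1: convergence, equation, uniqueness.} Since $f\geq 0$, Tonelli gives $\int_0^\infty f^{*n}=\tau^n$. Hence the series $g=\sum_{n\ge1} f^{*n}$ converges in $L^1$ with $\int g=\tau/(1-\tau)$. For pointwise boundedness I would use a local bound: $f\le M$ and $f\le \eta' t^{-3/2}$ for large $t$ give $\|f^{*n}\|_\infty\le M\tau^{n-1}$, so the series converges uniformly and $g$ is bounded. Associativity of convolution gives $g=f+f*g$, exactly as in the proof of Proposition \ref{exp_rep}. For uniqueness among bounded nonnegative solutions, take the difference $d$ of two bounded solutions. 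It satisfies $d=f*d$, so $d=f^{*n}*d$ for all $n$, and therefore $\|d\|_\infty\le \tau^n\|d\|_\infty\to0$.

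\textbf{Step 2: the key convolution estimate.} I would show that $f$ is in the ``subexponential density'' class: for $f\sim\eta t^{-3/2}$,
\[
(f*f)(t)\sim 2\tau f(t).
\]
To prove this, split $\int_0^t f(t-u)f(u)\,du$ at $t/2$. On $[0,t/2]$, one has $f(t-u)/f(t)\to1$ uniformly for $u\le A$, while the contribution from $u\in[A,t/2]$ is at most $C t^{-3/2}\int_A^\infty f$. This gives $\tau f(t)(1+o(1))$ for each half.

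\textbf{Step 3: the asymptotic and the uniform bound.} I need a uniform Kesten-type bound: for every $\epsilon>0$ there is $K_\epsilon$ with
\[
f^{*n}(t)\le K_\epsilon(\tau+\epsilon)^n (1+t)^{-3/2}\qquad\text{for all } n,t.
\]
I expect this to be the main obstacle. I would prove it by induction using $f^{*(n+1)}=f^{*n}*f$, splitting at $t/2$. On the piece where $f^{*n}$ is evaluated at arguments $\ge t/2$, use the inductive bound times $\int f$. On the other piece, bound by $\sup_{u\ge t/2}f(u)\cdot\tau^n$. Since the second piece yields only $C\tau^n t^{-3/2}$ with a geometric prefactor, the induction should close with the rate $\tau+\epsilon$ after taking the supremum over $t\le T_0$ separately.

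Choosing $\epsilon$ with $\tau+\epsilon<1$ then gives $g(t)\le c'(1+t)^{-3/2}$ directly. It also supplies a summable dominating sequence $K_\epsilon(\tau+\epsilon)^n$ for $f^{*n}(t)/f(t)$. Iterating Step 2 by induction gives $f^{*n}(t)\sim n\tau^{n-1}f(t)$. Dominated convergence over $n$ then yields
\[
\frac{g(t)}{f(t)}\to\sum_{n\ge1}n\tau^{n-1}=(1-\tau)^{-2}.
\]
The induction in the last step splits $f^{*n}*f$ at $t/2$ in the same way as Step 2 and uses the bound from Step 3.
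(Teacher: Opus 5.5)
Your proposal is correct and has the same skeleton as the paper's proof: a Kesten-type bound on $f^{*n}$, the asymptotics $f^{*n}(t)\sim n\tau^{n-1}f(t)$, dominated convergence in $n$, and Tonelli for $\int g$.

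The difference is in how the two key inputs are obtained. The paper cites \cite[Corollary 4.10, Theorem 4.11]{Fos} for the normalized density $p=f/\tau$, and treats small $t$ separately through $\|f^{*n}\|_\infty\le\|f\|_\infty\tau^{n-1}$. You instead prove both inputs by elementary splitting at $t/2$. Your Kesten bound is stated against the explicit comparison function $(1+t)^{-3/2}$ for all $t$ and $n$. It therefore gives the global bound $g(t)\le c'(1+t)^{-3/2}$ in one stroke. After multiplying by $\sup_{t\ge t_1}(1+t)^{-3/2}/f(t)<\infty$, it also gives the domination needed for $g/f$ at large $t$. Your route is self-contained but uses the power-law form of the tail, whereas the cited results cover general subexponential densities. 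Your uniqueness argument via $d=f^{*n}*d$ and $\|d\|_\infty\le\tau^n\|d\|_\infty$ is also fine, and in fact gives uniqueness among all bounded solutions.

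The one step you must write out carefully is the first piece of the Step 3 induction. ``Inductive bound times $\int f$'' cannot mean replacing $(1+t-u)^{-3/2}$ by $(1+t)^{-3/2}$, since that inequality goes the wrong way. The crude bound $(1+t-u)^{-3/2}\le 2^{3/2}(1+t)^{-3/2}$ on $[0,t/2]$ only lets the induction close at rate $2^{3/2}\tau$, which can exceed $1$ (in the application $\tau=1/(2\beta)$ may be close to $1$). What works is dominated convergence with dominating function $2^{3/2}f$:
\[
\int_0^{t/2}\Big(\frac{1+t}{1+t-u}\Big)^{3/2}f(u)\,du\longrightarrow\tau\qquad (t\to\infty).
\]
Hence there is $T_0=T_0(\epsilon)$, independent of $n$ and of $K_\epsilon$, beyond which this integral is at most $\tau+\epsilon/2$. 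Let $C$ be such that $\sup_{u\ge t/2}f(u)\le C(1+t)^{-3/2}$ for all $t\ge 0$. For $t\ge T_0$ you then get
\[
f^{*(n+1)}(t)\le\big[K_\epsilon(\tau+\epsilon)^n(\tau+\epsilon/2)+C\tau^n\big](1+t)^{-3/2}\le K_\epsilon(\tau+\epsilon)^{n+1}(1+t)^{-3/2}
\]
as soon as $K_\epsilon\ge 2C/\epsilon$. For $t\le T_0$, the bound $f^{*(n+1)}\le M\tau^n$ suffices once $K_\epsilon\ge M(1+T_0)^{3/2}/(\tau+\epsilon)$. With this made explicit, the rest of your argument goes through as written.
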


\begin{proof}
   The proof that $g$ is the unique solution satisfying \eqref{renew} was provided in Proposition \ref{exp_rep}, so we turn to establishing the other properties.
   
   Let $p(t) = f(t)/\tau$ so that $p$ is a bounded probability density. Since $p$ decays as $t^{-3/2}$ at infinity, we have that for every fixed $n\geq 1$, \begin{align}\label{a3.1}
       p^{*n}(t)\sim n p(t),
       \end{align}
   (see \cite[Corollary 4.10]{Fos}). Also, from Kesten's bound \cite[Theorem 4.11]{Fos} we have that for all $\varepsilon > 0$, there exists $c_\varepsilon , t_\varepsilon>0$ such that for all $n\geq 1$, and $t \geq t_\varepsilon$,
   \begin{align}\label{a3.2}
  p^{*n}(t) \leq c_\varepsilon(1+\varepsilon)^n p(t).
\end{align}
 Choose $\varepsilon$ such that $\tau(1+\varepsilon)< 1$. Since \begin{align*}
     \frac{g(t)}{f(t)} = \sum_{n=1}^\infty \tau^{n-1}\frac{p^{*n}(t)}{p(t)},
 \end{align*}
\eqref{a3.1}, \eqref{a3.2} and the dominated convergence theorem yield \begin{align*}
    \frac{g(t)}{f(t)} \to \sum_{n=1}^\infty n\tau^{n-1} = \frac{1}{(1-\tau)^2}
\end{align*}
 as $t \to \infty$. The same Kesten bound also yields finite constants $c', c''>0$, such that 
 \begin{align*}
     g(t)\leq c''f(t)\leq c'(1+t)^{-3/2}
 \end{align*}
 for all $t$ sufficiently large. Since $\tau <1$, the bound $\|f^{*n}\|_\infty\leq \|f\|_\infty \tau^{n-1}$ (here $\|\cdot\|_\infty$ is the supremum norm), implies that $g$ is bounded on all of $[0,\infty)$, and hence by increasing $c'$ if necessary, we have $g(t) \leq c'(1+t)^{-3/2}$ for all $t\geq 0$. Finally, Tonelli's theorem yields that \begin{align*} 
 \int_0^\infty g(t)dt = \sum_{n=1}^\infty \tau^n = \frac{\tau}{(1-\tau)}.
 \end{align*}
\end{proof}

\begin{lemma} \label{lem_a4}
Let $g, k:[0,\infty) \to [0, \infty)$ be locally bounded functions, such that \begin{align*}
        \sigma\coloneqq \int_0^\infty g(t)dt < \infty, \qquad g(t)\leq c'(1+t)^{-3/2}, \qquad k(t) \sim c''t^{-3/2}
    \end{align*}
    for some constants $0<c', c'' <\infty $. Then for any sequence $(s_n)_{n\geq 1}, (t_n)_{n\geq 1}$ such that $t_n \to \infty$ and $s_n - t_n \to \infty$, \begin{align*}
        \frac{1}{k(s_n+t_n)}\int_0^{t_n} g(u)k(s_n + t_n - 2u)du \to \sigma.
        \end{align*}
\end{lemma}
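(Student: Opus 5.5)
Write $x_n = s_n + t_n$, so $x_n - 2t_n = s_n - t_n \to \infty$, and for $u \in [0,t_n]$ the argument $x_n - 2u$ is at least $s_n - t_n$, hence tends to infinity uniformly in $u$. The plan is to show
\begin{align*}
\frac{k(x_n - 2u)}{k(x_n)} \to 1
\end{align*}
for each fixed $u$, which then gives the claim by dominated convergence. For fixed $u$, since $k(t) \sim c'' t^{-3/2}$ and $x_n - 2u \to \infty$, the ratio is asymptotic to $\bigl(x_n/(x_n - 2u)\bigr)^{3/2} \to 1$. Writing the normalized integral as $\int_0^\infty \1_{u \le t_n}\, g(u)\, k(x_n - 2u)/k(x_n)\, du$, the integrand therefore converges pointwise to $g(u)$, whose integral is $\sigma$.

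The dominating function is the main obstacle. The difficulty is that $x_n - 2u$ can be as small as $s_n - t_n$, which may be much smaller than $x_n$ when $t_n$ is comparable to $s_n$. I would split the integral at $u = t_n/2$.

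On $[0, t_n/2]$ we have $x_n - 2u \ge s_n \ge x_n/2$. Because $k$ is locally bounded and asymptotic to $c'' t^{-3/2}$, there are $M$ and $n_0$ such that $k(y)/k(x) \le M$ for all large $x$ and all $y \in [x/2, x]$. Concretely, for $x$ and $y$ beyond some threshold $t_0$, $k(y) \le 2c'' y^{-3/2}$ and $k(x) \ge c'' x^{-3/2}/2$, which gives ratio at most $4\cdot 2^{3/2}$; here $y \ge s_n - t_n \to \infty$, so eventually $y \ge t_0$. Thus on this range the integrand is dominated by $M g(u)$, which is integrable, and dominated convergence applies to the first piece.

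On $[t_n/2, t_n]$ I would not use dominated convergence but bound the piece directly and show it tends to $0$. Here $g(u) \le c'(1 + t_n/2)^{-3/2} \lesssim x_n^{-3/2}$, using $t_n \ge$ a fixed fraction of $x_n$ only when needed. If instead $t_n$ is small relative to $x_n$, I would simply use $t_n^{-3/2}$ together with a factor involving $x_n$ and track it. For $n$ large, $k(x_n - 2u) \le 2c''(x_n - 2u)^{-3/2}$, so
\begin{align*}
\int_{t_n/2}^{t_n} k(x_n - 2u)\, du \lesssim \int_{s_n - t_n}^{\infty} y^{-3/2}\, dy \lesssim (s_n - t_n)^{-1/2}.
\end{align*}
Dividing by $k(x_n) \asymp x_n^{-3/2}$, the second piece is
\begin{align*}
\lesssim \Bigl(\frac{x_n}{t_n}\Bigr)^{3/2} (s_n - t_n)^{-1/2} \cdot t_n^{0}
\end{align*}
up to constants. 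More carefully, $g \le c'(t_n/2)^{-3/2}$ gives a piece of order $x_n^{3/2}\, t_n^{-3/2}\,(s_n - t_n)^{-1/2}$.

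This bound is problematic when $t_n \ll x_n$, so I would treat that regime separately. When $t_n \le x_n/4$, every $u \le t_n$ satisfies $x_n - 2u \ge x_n/2$, and the first-range bound $M g(u)$ covers the whole interval, so no splitting is needed. When $t_n > x_n/4$, we have $x_n/t_n \le 4$ and the second piece is $O\bigl((s_n - t_n)^{-1/2}\bigr) \to 0$.

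Passing to subsequences along which one of these two regimes holds, both cases give the limit $\sigma$. Since every subsequence has a further subsequence converging to $\sigma$, the full sequence converges to $\sigma$.
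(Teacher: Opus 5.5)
Your argument is correct and is essentially the paper's. Both proofs bound the ratio $k(s_n+t_n-2u)/k(s_n+t_n)$ uniformly on the initial range where $s_n+t_n-2u \geq (s_n+t_n)/2$, and both make the piece near $u=t_n$ vanish using $g(u)\lesssim (s_n+t_n)^{-3/2}$ together with $\int_{s_n-t_n}^\infty k \to 0$. Two minor differences: you use dominated convergence where the paper truncates at a fixed $T$ and cites the uniform convergence theorem for regularly varying functions, and splitting at $(s_n+t_n)/4$ as the paper does, rather than at $t_n/2$, would have made your two-regime subsequence argument unnecessary.
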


\begin{proof}
    Throughout the proof, $\eta$ shall denote some constant that may depend only on $g, k$ and whose value of $\eta$ may change from line to line. 
    
    Let $T<\infty$ be fixed. From the uniform convergence theorem (see \cite[Theorem 1.5.2]{Bin} and in particular, \cite[(2.18)]{Fos}) we have \begin{align*}
        \sup_{0\leq u\leq T} \Big|\frac{k(s_n +t_n - 2u)}{k(s_n + t_n)} - 1\Big| \to 0. 
    \end{align*}
    As a result, \begin{align}\label{a4.1}
        \frac{1}{k(s_n + t_n)}\int_0^T g(u) k(s_n+t_n - 2u)du \to \int_0^T g(u)du.
    \end{align}
    The asymptotic rate of $k$ yields that for all $n$ sufficiently large and every $0\leq u\leq (s_n + t_n)/4,$ \begin{align*}
        \frac{k(s_n+ t_n - 2u)}{k(s_n + t_n)} \leq 3\Big(\frac{s_n+ t_n}{s_n + t_n-2u}\Big)^{3/2} \leq 6\sqrt{2}. 
    \end{align*}
    Thus, \begin{align}
        \frac{1}{k(s_n + t_n)}\int_T^{t_n \wedge ((s_n + t_n)/4)} g(u)k(s_n + t_n - 2u)du \leq \eta\int_T^\infty g(u)du.
    \end{align}
    Finally, if the interval $[(s_n + t_n)/4, t_n]$ is non-empty, we use the asymptotic bounds on $g$ and $k$ to obtain that for all $n$ sufficiently large, $g(u)/k(s_n+t_n) \leq \eta$ for all $u \in [(s_n + t_n)/4, t_n]$. This gives \begin{align}\label{a4.3}
         \frac{1}{k(s_n + t_n)}\int_{(s_n + t_n)/4}^{t_n} g(u)k(s_n + t_n - 2u)du \leq \eta \int_{(s_n + t_n)/4}^{t_n} k(s_n + t_n - 2u)du \leq \frac{\eta}{2}\int_{s_n - t_n}^\infty k(u)du,
    \end{align}
    which vanishes when $n\to\infty$ since $s_n - t_n \to \infty$. Combining \eqref{a4.1}-\eqref{a4.3}, we therefore get \begin{align*}
        \limsup_{n\to \infty} \Big|\frac{1}{k(s_n + t_n)}\int_0^{t_n} g(u)k(s_n + t_n - 2u)du - \sigma\Big| \leq \eta \int_T^\infty g(u)du.
    \end{align*}
    Letting $T \to \infty$ and using the fact that $g$ is integrable, the claim is established.
\end{proof}

The following lemma was used to analyze the behavior of the correlation function for quadratic spin glasses at critical temperature.

\begin{lemma}\label{lem_a5}
  Let $g, k:[0,\infty) \to [0, \infty)$ be locally bounded functions, such that \begin{align*}
        k(t) \sim c't^{-3/2}, \qquad g(t) \sim c''t^{-1/2},
    \end{align*}
    for some constants $0<c', c'' <\infty $. Then for any sequence $(s_n)_{n\geq 1}, (t_n)_{n\geq 1}$ such that $t_n \to \infty$ and $s_n - t_n \to \infty$, \begin{align*}
       \frac{1}{\varphi(s_n,t_n)}\Big(k(s_n+t_n) + \int_0^{t_n}g(u) k(s_n +t_n -2u)du\Big) \to 1,
        \end{align*}
        where \begin{align*}
    \varphi(s,t) \coloneqq c'c''\int_0^t u^{-1/2} (s+t-2u)^{-3/2}du =\frac{2c'c''\sqrt{t}}{(s+t)\sqrt{s-t}}, \qquad s> t.
\end{align*}
\end{lemma}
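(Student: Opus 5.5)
First I will verify the closed form of $\varphi$ and then split the convolution integral into pieces, following the proof of Lemma \ref{lem_a4}. For the closed form, substitute $u=tv$ and write $a=s+t$. Then $\int_0^t u^{-1/2}(a-2u)^{-3/2}du$ has antiderivative $\frac{u^{1/2}}{a\sqrt{a-2u}}$, which one checks by differentiating: the derivative is $\frac{1}{a}\big[\tfrac12 u^{-1/2}(a-2u)^{-1/2}+u^{1/2}(a-2u)^{-3/2}\big]=u^{-1/2}(a-2u)^{-3/2}\cdot\frac{1}{a}\big[\tfrac12(a-2u)+u\big]=\tfrac12 u^{-1/2}(a-2u)^{-3/2}$. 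Hence the integral equals $\frac{2\sqrt t}{(s+t)\sqrt{s-t}}$, as claimed. Next I note that the boundary term is negligible: $k(s_n+t_n)/\varphi(s_n,t_n)\asymp (s+t)^{-1/2}\sqrt{s-t}/\sqrt t\cdot (s+t)^{-1}\cdot(s+t)$. More precisely, the ratio is $\asymp \frac{\sqrt{s-t}}{\sqrt{t}\,\sqrt{s+t}}\le t^{-1/2}\to0$. So it suffices to show that $I_n:=\int_0^{t_n}g(u)k(s_n+t_n-2u)\,du\sim\varphi(s_n,t_n)$.

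Fix $\varepsilon>0$ and choose $M$ such that $|g(u)/(c''u^{-1/2})-1|<\varepsilon$ for $u\ge M$, and $|k(x)/(c'x^{-3/2})-1|<\varepsilon$ for $x\ge M$. Since $s_n-t_n\to\infty$, we have $s_n+t_n-2u\ge s_n-t_n\ge M$ for all $u\le t_n$ once $n$ is large. So on $[0,t_n]$ the kernel $k$ is uniformly within a factor $1\pm\varepsilon$ of $c'(s+t-2u)^{-3/2}$. Split $I_n=\int_0^M+\int_M^{t_n}$. On $[M,t_n]$ both asymptotics hold uniformly, so that piece lies between $(1\pm\varepsilon)^2\,c'c''\int_M^{t_n}u^{-1/2}(s+t-2u)^{-3/2}du$. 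On $[0,M]$, the bound $(s+t-2u)^{-3/2}\le (s-t)^{-3/2}$ together with local boundedness of $g$ gives at most $C_M(s-t)^{-3/2}$. Likewise the missing part $c'c''\int_0^M u^{-1/2}(s+t-2u)^{-3/2}du\le C'_M(s-t)^{-3/2}$.

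The key step, and the main obstacle, is showing that these $[0,M]$ errors are $o(\varphi)$ uniformly over all regimes of $s/t$, including $s\gg t$ and $s-t\ll t$. Compute $(s-t)^{-3/2}/\varphi \asymp \frac{s+t}{\sqrt t\,(s-t)}$. If $s-t\ge t$, this is $\lesssim \frac{s-t+2t}{\sqrt t\,(s-t)}\le 3t^{-1/2}\to0$. If $s-t<t$, it is $\lesssim \frac{3t}{\sqrt t\,(s-t)}=\frac{3\sqrt t}{s-t}$, which need not vanish. In that regime I will bound the $[0,M]$ pieces more sharply: there $s+t-2u\ge s+t-2M\ge t$, so $(s+t-2u)^{-3/2}\le t^{-3/2}$ and the pieces are $O_M(t^{-3/2})$. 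Meanwhile $\varphi\asymp \frac{1}{\sqrt t\,\sqrt{s-t}}$, so the ratio is $O_M\big(\sqrt{s-t}/t\big)\le O_M(t^{-1/2})\to0$. This case split, using $\min\{(s-t)^{-3/2},t^{-3/2}\}$, handles both regimes. Combining the pieces gives $\limsup|I_n/\varphi-1|\le (1+\varepsilon)^2-1$, and letting $\varepsilon\to0$ concludes the proof.
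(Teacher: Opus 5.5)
Your proof is correct and follows the paper's argument: split at a fixed threshold, compare $g$ and $k$ to their power laws on $[T,t_n]$, and show that the $[0,T]$ contributions and the boundary term are $o(\varphi)$. The only difference is that the paper avoids your case split by using $s_n+t_n-2u\ge (s_n+t_n)/2$ for $u\le T$; this gives an $O((s_n+t_n)^{-3/2})$ error whose ratio to $\varphi$ is at most $1/(2c'c''\sqrt{t_n})$ in every regime. (A small slip: the antiderivative of $u^{-1/2}(a-2u)^{-3/2}$ is $2u^{1/2}/(a\sqrt{a-2u})$, although your final value of $\varphi$ is right.)
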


\begin{proof}
    Set $\varepsilon\in (0,1)$. Then for some $T = T(\varepsilon) \geq 1$ sufficiently large we have \begin{align*}
    \Big|\frac{k(t)}{c't^{-3/2}}-1\Big| \leq \varepsilon, \qquad \text{and}\qquad \Big|\frac{g(t)}{c'' t^{-1/2}}-1\Big| \leq \varepsilon, \qquad \text{for all }t \geq T.
\end{align*}
For all $n$ sufficiently large, $t_n \geq T$ and $s_n - t_n \geq T$. So for $u \in [T, t_n]$, we have $s_n + t_n - 2u \geq s_n -t_n \geq T$. Hence on $[T, t_n]$, \begin{align*}
    \Big|\frac{g(u)k(s_n+t_n - 2u)}{c'c''u^{-1/2}(s_n+t_n-2u)^{-3/2}}-1\Big| \leq 2\varepsilon + \varepsilon^2 \leq 3\varepsilon, 
\end{align*}
and hence \begin{align*}
    \Big| \int_T^{t_n} g(u) k(s_n + t_n -2u)du - c'c'' \int_T^{t_n} u^{-1/2} (s_n +t_n -2u)^{-3/2}du\Big| \leq 3\varepsilon \varphi(s_n,t_n).
\end{align*}
It remains to control the difference on $[0,T]$. Since $k(v) \sim c' t^{-3/2}$, there exist constants $C$ and $t_0$ such that $k(t)\leq Ct^{-3/2}$ for all $t \geq t_0$. For all sufficiently large $n$, $s_n + t_n - 2T \geq (s_n + t_n)/2 \geq t_0$, and thus using the local boundedness of $g$, we obtain \begin{align*}
    k(s_n + t_n) + \int_0^T g(u)k (s_n+ t_n -2u)du \leq C'(s_n + t_n)^{-3/2},\\
    c'c'' \int_0^T u^{-1/2} (s_n +t_n-2u)^{-3/2}du\leq C' (s_n+t_n)^{-3/2},
\end{align*} 
for some constant $C'$ that depends on $T$ but not on $n$. From the definition of $\varphi$, it follows that \begin{align*}
    \frac{(s_n + t_n)^{-3/2}}{\varphi(s_n,t_n)} \leq \frac{1}{2c'c'' \sqrt{t_n}} \to 0,
\end{align*} and combining with the above two displays yields\begin{align*}
    \limsup_{n\to \infty}\Big|\frac{1}{\varphi(s_n,t_n)}\Big(k(s_n+t_n) + \int_0^{t_n}g(u) k(s_n +t_n -2u)du\Big) - 1\Big| \leq 3\varepsilon.
\end{align*}
Letting $\varepsilon \to 0$ finishes the claim.
\end{proof}

\bibliographystyle{siam}
\bibliography{ref_nr}
\end{document}